\numberwithin{equation}{section}
\newcommand{\CC}{\mathbb{C}}
\newcommand{\PP}{\mathbb{P}}
\newcommand{\QQ}{\mathbb{Q}}
\newcommand{\RR}{\mathbb{R}}
\newcommand{\ZZ}{\mathbb{Z}}
\newcommand{\bk}{\mathbf{k}}
\newcommand{\bq}{\mathbf{q}}
\newcommand{\kk}{\bk}
\def\bA{\mathbf{A}}
\newcommand{\cal}{\mathcal}
\def\cA{{\cal A}}
\def\cC{{\cal C}}
\def\cD{{\cal D}}
\def\cE{{\cal E}}
\def\cK{{\cal K}}
\def\cL{{\cal L}}
\def\cM{{\cal M}}
\def\cN{{\cal N}}
\def\cO{{\cal O}}
\def\cR{{\cal R}}
\def\cT{{\cal T}}
\def\cU{{\cal U}}
\def\cX{{\cal X}}
\def\cZ{{\cal Z}}
\def\mapright#1{\,\smash{\mathop{\lra}\limits^{#1}}\,}
\let\cMM=\cM
\let\cNN=\cN
\def\BM{H^{BM}\lsta}
\def\dual{^{\vee}}
\def\sta{^\ast}
\def\virt{^{\mathrm{vir}}}
\def\upmo{^{-1}}
\def\sta{^{\ast}}
\def\dpri{^{\prime\prime}}
\def\pri{^{\prime}}
\def\mm{{\mathfrak m}}
\def\sta{^*}
\def\lra{\longrightarrow}
\def\lsta{_{\ast}}
\newcommand{\si}{\sigma}
\def\begeq{\begin{equation}}
\def\endeq{\end{equation}}
\def\and{\quad{\rm and}\quad}
\def\bl{\bigl(}
\def\br{\bigr)}
\def\defeq{:=}
\def\mh{\!:\!}
\def\sub{\subset}
\def\Ao{{\mathbb A}^{\!1}}
\def\Po{{\mathbb P^1}}
\def\and{\quad\text{and}\quad}
\def\mapright#1{\,\smash{\mathop{\lra}\limits^{#1}}\,}
\def\llra{\mathop{-\!\!\!\lra}}
\def\reg{_{\text{reg}}}
\DeclareMathOperator{\Tor}{Tor} \DeclareMathOperator{\Ext}{Ext}
 \DeclareMathOperator{\ext}{\cE
\it{xt}} \DeclareMathOperator{\Hom}{Hom}
 \DeclareMathOperator{\Aut}{Aut}
\DeclareMathOperator{\image}{Im}
 \DeclareMathOperator{\rank}{rank}
\DeclareMathOperator{\spec}{Spec}
\newtheorem{prop}{Proposition}[section]
\newtheorem{theo}[prop]{Theorem}
\newtheorem{lemm}[prop]{Lemma}
\newtheorem{coro}[prop]{Corollary}
\newtheorem{defi}[prop]{Definition}
\newtheorem{conj}[prop]{Conjecture}
\newtheorem{defiprop}[prop]{Definition-Proposition}
\newtheorem{subl}[prop]{Sublemma}
\DeclareMathOperator{\coker}{coker}
\def\llra{\,\mathop{-\!\!\!\lra}\,}
\def\bone{{\mathbf 1}}
\def\mgn{\cM_{g,n}}
\def\mxn{\cM_{\chi,n}}
\def\Ob{\cO b}
\def\kzz{\kk[\![z]\!]}
\def\loc{_{\mathrm{loc}}}
\def\mcn{\cM_{\chi,n}}
\def\bul{^\bullet}
\def\ev{\text{ev}}
\def\aa{\alpha}
\def\lab#1{\label{#1}[{#1}]\  }
\let\lab=\label
\title{Gromov-Witten invariants of varieties with holomorphic 2-forms}
\date{}
\author{Young-Hoon Kiem}
\address{Department of Mathematics and Research Institute
of Mathematics, Seoul National University, Seoul 151-747, Korea}
\email{kiem@math.snu.ac.kr}
\author{Jun Li}
\address{Department of Mathematics, Stanford University, Stanford,
USA} \email{jli@math.stanford.edu}
\begin{document}

\begin{abstract}
We show that a holomorphic two-form $\theta$ on a smooth
{algebraic} variety $X$ localizes the virtual fundamental class of
the moduli of stable maps $\mgn(X,\beta)$ to the locus where
$\theta$ degenerates; it then enables us to define {the}
\emph{localized GW-invariant}, an algebro-geometric analogue of the
local invariant of Lee and Parker in symplectic geometry
\cite{Lee-Parker}, which coincides with the ordinary GW-invariant
when $X$ is proper. It is deformation invariant. {Using this, we
prove formulas for low degree GW-invariants of minimal general type
surfaces with $p_g>0$ conjectured by Maulik and Pandharipande.}
\end{abstract}
\maketitle 

\section{Introduction}
In recent years, Gromov-Witten invariant (GW-invariant, for short)
has played an important role in research in algebraic geometry and
in Super-String theories. Various effective techniques have been
contrived to compute these invariants, such as localization by torus
action \cite{GraberPand}, degeneration method \cite{IonelParker,
Li2,LiRuan}, quantum Riemann-Roch and Lefschetz
\cite{CoatesGivental}, to name a few. For curves, these invariants
have been completely determined in \cite{OkounkovPand}. For higher
dimensional case, much 
{more remains to be done.} In this paper, we provide a new
technique, called the \emph{localization by holomorphic two-form},
that is the algebro-geometric analogue of what was first 
{discovered} by Lee and Parker \cite{Lee-Parker} in symplectic
geometry.

The localization by holomorphic two-form is a localization theorem
on virtual cycles. As is known, central to the construction of
GW-invariants is to replace the fundamental class of the moduli of
stable maps by its {\sl virtual fundamental cycle}.
In this paper, we show that a holomorphic two-form $\theta\in
H^0(\Omega^2_X)$ on a smooth quasi-projective variety $X$ localizes
(or forces) the virtual fundamental cycle to (support in) the locus
of those stable maps $f\mh C\to X$ whose images lie in the
degeneracy locus of $\theta$.

Based on this localization by holomorphic two-form, for a pair of a
smooth quasi-projective variety and a holomorphic two-form
$(X,\theta)$ with certain properness requirement, we shall define
the so-called \emph{localized GW-invariant}. This invariant is
deformation invariant; when the degeneracy locus of $\theta$ is
smooth, it is equivalent to the localized invariant of its normal
bundle; when $X$ is proper, it coincides with the ordinary
GW-invariant.

Applying this to a smooth minimal general type surface $S$ with
positive $p_g=h^2(\cO_S)$, we 
{conjecture} that its GW-invariants are obtained from the localized
GW-invariants of the total space of a theta characteristic of a
smooth curve $D$ of genus $K_S^2+1$. In the ideal case, the
localized GW-invariants are explicitly related to the twisted
GW-invariants of $D$, which includes all GW-invariants of $S$
\emph{without} descendant insertions.
{This conjecture was partially proved by} Lee-Parker for the case
when $S$ has smooth canonical divisors \cite{Lee-Parker}. For
others, we prove a degeneration formula that allows us to reduce the
complete set of GW-invariants to the twisted GW-invariants of curves
and of some low degree relative localized invariants. For the case
of degrees 1 and 2, we work out their details and verify the
formulas conjectured by Maulik and Pandharipande \cite[(8) and
(9)]{MaulikPand}.


This localized invariant has recently been employed by W-P. Li and
the second author to study the GW-invariants of the Hilbert schemes
of points on surfaces \cite{LL}.

We now provide a more detailed outline of this paper. In section 2,
we work out the details of the localization by a holomorphic
two-form and the localized GW-invariants. Let $X$ be a smooth
quasi-projective variety equipped with a (nontrivial) holomorphic
two-form $\theta\in H^0(\Omega^2_X)$. Then $\theta$ gives rise to a
homomorphism, called a \emph{cosection}
\[
\sigma:\cO b_\cM\lra \cO_\cM
\]
from the obstruction sheaf of the moduli space $\cM=\mgn(X,\beta)$
of stable maps to its structure sheaf.
Let $Z(\sigma)$ be the locus over which $\sigma$ is not surjective.
We then show that
we can canonically construct a \emph{localized} virtual fundamental
class
\[
[\cM]\virt_{\mathrm{loc}}\in H\lsta^{BM} (Z(\sigma))
\]
satisfying the following properties: it is deformation invariant
under a mild condition on the cosections; {\sl its pushforward to
$H\lsta(\cM)$ coincides with the ordinary virtual class $[\cM]\virt$
in case the degeneracy locus $Z(\sigma)$ is proper.}

This immediately leads to

\begin{theo} For a pair $(X,\theta)$ of a smooth projective variety
and a holomorphic two-form, the virtual fundamental class of
$\mgn(X,\beta)$ vanishes unless $\beta$ is represented by a
$\theta$-null stable map.
\end{theo}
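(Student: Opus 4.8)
The plan is to obtain Theorem~1.1 as an immediate consequence of the localization of the virtual cycle by the cosection $\sigma$ induced by $\theta$; the only genuine step is to identify the degeneracy locus $Z(\sigma)$ inside $\cM:=\mgn(X,\beta)$ with the set of $\theta$-null stable maps. Since $X$ is projective, $\cM$ is proper, so $[\cM]\virt$ is defined in $H\lsta(\cM)$ and every closed subset of $\cM$ is proper. For the deduction we will use only the bare existence of the localized class $[\cM]\virt_\loc\in H\lsta^{BM}(Z(\sigma))$ recalled above and its comparison property, namely that its pushforward to $H\lsta(\cM)$ equals $[\cM]\virt$ whenever $Z(\sigma)$ is proper; deformation invariance and the mildness condition on the cosection play no role here.

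First I would recall the pointwise form of $\sigma$. At a stable map $[f\mh C\to X]$, which I write out for $C$ smooth (the nodal, possibly disconnected case being handled componentwise with $\Omega_C$ replaced by $\omega_C$, the marked points contributing nothing), the obstruction space is $H^1(C,f^\ast T_X)$, and, up to Serre duality on $C$, the cosection $\sigma$ is the homomorphism
\[
H^1(C,f^\ast T_X)\lra H^1(C,\omega_C)\cong\CC
\]
induced on $H^1$ by the composite $\phi\mh f^\ast T_X\xrightarrow{f^\ast\theta}f^\ast\Omega_X\xrightarrow{(df)^\vee}\omega_C$. The key point is that $\sigma$ is non-surjective at $[f]$ exactly when $\phi=0$: indeed, if $\phi\neq0$ its image is $\omega_C(-D)$ for some effective divisor $D$, the map $H^1(\omega_C(-D))\to H^1(\omega_C)$ coming from $0\to\omega_C(-D)\to\omega_C\to\cO_D\to0$ is surjective (it sits just before $H^1(\cO_D)=0$ in the long exact sequence), and $H^1(f^\ast T_X)\to H^1(\omega_C(-D))$ is surjective because $C$ is a curve, so $H^1(\phi)\neq0$. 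By skew-symmetry of $\theta$, $\phi=0$ is equivalent to the vanishing of $f^\ast\theta\circ df\mh T_C\to f^\ast\Omega_X$, which is the condition that $[f]$ be $\theta$-null. Hence $Z(\sigma)$, which is closed as the non-surjectivity locus of a sheaf homomorphism, is precisely the locus of $\theta$-null stable maps in $\cM$.

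With this identification, suppose $\beta$ is not represented by a $\theta$-null stable map. Then $Z(\sigma)=\emptyset$, so $H\lsta^{BM}(Z(\sigma))=0$ and therefore $[\cM]\virt_\loc=0$. Since $\emptyset$ is trivially proper, the comparison property applies and shows that the pushforward of $[\cM]\virt_\loc$ to $H\lsta(\cM)$ equals $[\cM]\virt$; as that pushforward is $0$, we conclude $[\cM]\virt=0$, which is the assertion.

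I expect the main obstacle to be the second paragraph: setting up the obstruction sheaf and the cosection correctly for prestable, possibly nodal and disconnected domains carrying marked points, and verifying that the pointwise dictionary ``$\sigma$ non-surjective $\iff$ $\theta$-null'' identifies $Z(\sigma)$ as the closed subscheme cut out by $\sigma$, not merely as a set of closed points. Everything else is formal once the localized virtual class and its comparison property are in hand.
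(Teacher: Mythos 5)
Your proposal is correct and follows essentially the same route as the paper: the two-form induces a cosection $\sigma$ of $\Ob_\cM$, its degeneracy locus is identified with the locus of $\theta$-null stable maps (the paper's Lemma 3.7), and the Localization Lemma together with properness of $\cM$ gives the vanishing. The only difference is in the pointwise identification, where the paper takes the Serre dual of $H^1(C,u^\ast T_X)\to H^1(C,\omega_C)$ and uses that $\cO_C$ is globally generated --- which treats nodal and reducible domains uniformly --- whereas your direct surjectivity argument via $\mathrm{im}(\phi)=\omega_C(-D)$ requires exactly the extra care you flag for domains where $\phi$ could vanish on entire components.
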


Here we say a stable map $f\mh C\to X$ with fundamental class
$\beta=f_*[C]$ is $\theta$-null if the image of $df$ lies in the
degeneracy locus of $\theta$. (See section 3 for details.)

This theorem recovers the vanishing results of J. Lee and T. Parker
\cite{Lee-Parker}. For instance, if $X$ is $2m$-dimensional and
$D=zero(\theta^m)$, then all the GW-invariants vanish unless $\beta$
is in the image of $H_2(D,\ZZ)$ in $H_2(X,\ZZ)$ by the inclusion
map. In particular, if $X$ is a holomorphic symplectic manifold
(i.e. $D=\emptyset$), all GW-invariants vanish.

In section 3, we define the localized GW-invariants as the integral
over the localized virtual fundamental class of tautological
classes. By construction, when $X$ is proper, the localized
GW-invariants coincide with the ordinary GW-invariants. The
localized invariants are invariant under a class of deformation
relevant to our study.

For a smooth minimal general type surface $S$ with $p_g>0$, we shall
take $X$ to be the total space of any theta characteristic $L$ on a
smooth projective curve $D$ of genus $h=K_S^2+1$ satisfying
$h^0(L)\equiv\chi(\cO_S)\mod (2)$. It is easy to see that $X$ has a
holomorphic two-form non-degenerate away from the zero section of
$X\to D$.

\begin{conj}
Let $S$ be a smooth {minimal general type surface with positive
$p_g>0$}. Its GW-invariants $\langle \cdots \rangle _{\beta,g}^S$
vanish unless $\beta$ is a non-negative integral multiple of
$c_1(K_S)$. In case $\beta=d c_1(K_S)$ for an integer $d> 0$, we let
$(D,L)$ be a pair of a smooth projective curve of genus $K_S^2+1$
and its theta characteristic with parity $\chi(\cO_S)$, and let $X$
be the total space of $L$. Then there is a canonical homomorphism
$\rho\mh H^{\ast}(S,\ZZ)\to H^{\ast}(X,\ZZ)$ so that for any classes
$\gamma_i\in H^{\ast}(S,\ZZ)$ and integers $\alpha_i\geq 0$,
$i=1,\cdots,n$,
\[
\langle\tau_{\alpha_1}(\gamma_1)\cdots\tau_{\alpha_n}(\gamma_n)\rangle^S_{\beta,g}
=\langle\tau_{\alpha_1}(\rho(\gamma_1))\cdots
\tau_{\alpha_n}(\rho(\gamma_n))\rangle^{X,\mathrm{loc}}_{d[D],g}.
\]
\end{conj}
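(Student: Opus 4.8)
The plan is to bootstrap everything from the localization theorem of Sections~2--3. Since $p_g>0$ there is a nonzero $\theta\in H^0(\Omega^2_S)=H^0(K_S)$, whose degeneracy locus is the zero divisor $D_0\in|K_S|$. Applying the localization by $\theta$ to $\cM=\mgn(S,\beta)$ yields a cosection $\sigma\colon\cO b_{\cM}\lra\cO_{\cM}$ whose non-surjectivity locus $Z(\sigma)$ is the set of stable maps whose non-contracted part maps into $D_0$; by the vanishing theorem (and the remark following it) $[\cM]\virt=0$ unless $\beta$ lies in the image of $H_2(D_0,\ZZ)\to H_2(S,\ZZ)$. For a minimal surface of general type one then checks that the surviving classes are the non-negative multiples of $c_1(K_S)$: this is immediate in the ideal case where $|K_S|$ is base-point free and its general member $D_0$ is smooth and irreducible, and in general requires an additional analysis of pluricanonical systems (or follows a posteriori from the degeneration below). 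This gives the first assertion of the conjecture and reduces the rest to $\beta=d\,c_1(K_S)$ with $d>0$.

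For the equality of invariants I would first treat the \emph{ideal case} in which $|K_S|$ contains a smooth member $D$. Adjunction gives $N_{D/S}^{\otimes 2}\cong K_D$ and $2g(D)-2=2K_S^2$, so $g(D)=K_S^2+1$ and $N_{D/S}$ is a theta characteristic of $D$; its parity is $\chi(\cO_S)\bmod 2$ because the cup-product form $v\mapsto(w\mapsto v\cup\theta\cup w)$ on $H^1(\cO_S)$ is skew, hence of even rank, which via the long exact sequence of $0\to\cO_S\to K_S\to N_{D/S}\to 0$ determines $h^0(N_{D/S})\bmod 2$. Let $X$ be the total space of $N_{D/S}$. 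Since the localized virtual class of $\mgn(S,d\,c_1(K_S))$ is supported on stable maps into $D\subset S$, I would deform an analytic neighborhood of $D$ in $S$, together with $\theta$, to the total space of its normal bundle $X$ carrying its tautological two-form, and invoke the deformation invariance of the localized invariant to obtain
\[\langle\tau_{\alpha_1}(\gamma_1)\cdots\tau_{\alpha_n}(\gamma_n)\rangle^S_{d\,c_1(K_S),g}=\langle\tau_{\alpha_1}(\rho(\gamma_1))\cdots\tau_{\alpha_n}(\rho(\gamma_n))\rangle^{X,\mathrm{loc}}_{d[D],g},\]
where $\rho\colon H^*(S,\ZZ)\to H^*(X,\ZZ)\cong H^*(D,\ZZ)$ is restriction to the neighborhood followed by the retraction onto $D$. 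This recovers the Lee--Parker theorem algebraically.

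When $|K_S|$ has no smooth (indeed no irreducible) member the neighborhood of $D_0$ is singular and the retraction above breaks down, so I would instead degenerate: resolve the base locus of a pencil in $|K_S|$ by blow-ups and base change, form the resulting semistable degeneration of the localized moduli problem, and prove a degeneration formula for the \emph{localized} virtual class. The components that appear are total spaces of line bundles over curves, whose localized invariants are the twisted GW-invariants of those curves computed from the theta-characteristic bundle, glued to finitely many low-degree \emph{relative} localized invariants. For $\beta=c_1(K_S)$ and $\beta=2c_1(K_S)$ the list of relative pieces is short; evaluating them explicitly and assembling them according to the degeneration formula should reproduce exactly formulas~(8) and (9) of Maulik--Pandharipande.

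The main obstacle is precisely this last step in general. First one must establish the degeneration formula for the localized virtual class, i.e.\ check that the cosection $\sigma$ is compatible with degeneration to the normal cone and with the gluing of relative moduli spaces; this is where the mild properness hypotheses on cosections from Section~2 are essential. Second, even granting the formula, evaluating the relative localized invariants of all the building blocks for arbitrary $d$ is not known, which is exactly why the statement is posed as a conjecture and only the cases $d=1,2$ are verified here. A subsidiary point needing care is the \emph{canonicity} of $\rho$: one must show it is independent of the smoothing and of the chosen pair $(D,L)$ within the prescribed parity, and that it intertwines the descendant (and divisor) insertions on the two sides, so that the displayed identity is well posed despite $X$ being non-compact.
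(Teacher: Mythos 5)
This statement is posed as a \emph{Conjecture} in the paper: the authors prove it only when $S$ admits a smooth canonical divisor, and they do so by exactly the route you describe for your ``ideal case'' --- deforming $S$ to the total space of the normal bundle of a smooth canonical curve (the blow-up of $S\times\Ao$ along $D\times 0$ in \S\ref{subs3.3}, where the Kodaira--Spencer class along the central fiber is shown to vanish) and invoking the deformation invariance of the localized virtual class from Proposition \ref{constancy}. So for the part of the statement that is actually established here, your proposal is essentially the paper's own argument, and you correctly identify that the remaining cases (no smooth, or even no irreducible, canonical divisor) are genuinely open --- which is precisely why the statement is a conjecture. The one divergence is in the speculative part: for the general case the paper announces a deformation of the analytic germ of a canonical divisor (deferred to a forthcoming paper), whereas you propose a pencil degeneration plus a degeneration formula for localized classes; neither is carried out, and your honest flagging of the unproved degeneration formula and of the canonicity of $\rho$ is appropriate.
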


This conjecture is proved by Lee-Parker \cite{Lee-Parker} when $S$
has a smooth canonical divisor. In \S \ref{subs3.3}, we provide a
different proof by showing invariance of the localized virtual
fundamental class under deformation of $S$ to the normal bundle $X$
of a smooth canonical curve. In general, by studying the deformation
of the (analytic) germ of a canonical divisor, we have confirmed the
conjecture for a wider class of surfaces, including the case when
$S$ has a reduced canonical divisor. We shall address this in
\cite{KL3}.

In light of this Conjecture, the GW-invariants of minimal general
type surfaces are reduced to the localized GW-invariants of the
total space $X$ of a theta-characteristic $L$ over a smooth curve
$D$.
Let $(\pi,f)$ represent the universal stable map to $D$. In case
$R^1\pi_*f^*L$ is locally free, we prove that the localized
GW-invariants of $X$ are the twisted GW-invariants of $D$ with
appropriate sign modifications (Proposition \ref{fundtwist}). This
in particular enables us to recover the following result of
Lee-Parker \cite{Lee-Parker}.

\begin{theo} Let $X$ be the total space of a theta-characteristic
$L$ on a smooth curve $D$. Then the localized genus $g$ GW-invariant
of $X$ with homology class $d[D]$ is
\[
\langle 1\rangle^{X,\mathrm{loc}}_{d,g} =
\sum_{u:d\text{-fold \'etale cover of }D}
\frac{(-1)^{h^0(u^*L)}}{|\mathrm{Aut}(u)|}
\]
for $g=d(h-1)+1$ where $h$ is the genus of $D$. All the other
localized invariants are zero.
\end{theo}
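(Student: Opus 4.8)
The plan is to exploit the structure theorem for the localized invariants of the total space of a line bundle. First I would set up the universal stable map: write $\cM=\cM_{g}(X,d[D])$ for the moduli of stable maps to $X=\mathrm{Tot}(L)$, and observe that since $\theta$ is nondegenerate away from the zero section $D\subset X$, Theorem~1.1 (or rather its localized refinement) forces $[\cM]\virt_{\mathrm{loc}}$ to be supported on the substack of stable maps factoring through $D$. Hence the localized invariant is computed on $\cM_{g}(D,d)$, the moduli of stable maps to the curve $D$ of degree $d$. The obstruction theory of maps to $X$ relative to maps to $D$ is governed by $L$ pulled back along the universal map, so the localized virtual class is obtained by capping $[\cM_{g}(D,d)]\virt$ against an Euler-class-type contribution coming from $R^\bullet\pi_*f^*L$, with a sign correction; this is precisely the content of Proposition~\ref{fundtwist} quoted just before the statement, so I would invoke it directly.

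Next I would analyze the dimension constraint. The virtual dimension of $\cM_{g}(D,d)$ is $(1-g) + (\dim D - 3)(1-g) + \text{stuff from }d$; more precisely, since $D$ is a curve, the expected dimension of stable maps of genus $g$ and degree $d$ to $D$ together with the twisted contribution from the theta characteristic forces the integrand to be a number only when the genus matches $g = d(h-1)+1$, where $h$ is the genus of $D$. For all other $g$ the localized invariant vanishes by a dimension count — the twisted class has the wrong degree against the virtual fundamental class. This handles the ``all other localized invariants are zero'' clause. In the critical genus, $\chi = 2 - 2g = 2 - 2d(h-1) - 2 = -2d(h-1) = d\chi(\cO_D)\cdot 2$... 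I would simply match the numerology so that the only stable maps contributing, after the twisted class is accounted for, are the \emph{unramified} degree-$d$ covers $u\mh C\to D$ with $C$ of the right genus, i.e. the étale covers; ramified covers or maps with contracted components are killed either by the obstruction bundle contribution or by dimension.

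Then I would carry out the contribution computation at each étale cover. For an étale $d$-fold cover $u\mh C\to D$, the map is unobstructed as a map to $D$, and the twisted contribution $R^\bullet\pi_*f^*L$ at this point is $H^\bullet(C, u^*L)$. Since $u$ is étale, $u^*L$ is again a theta characteristic on $C$ (as $u^*L^{\otimes 2}=u^*K_D = K_C$), so $h^0(u^*L)-h^1(u^*L)=\chi(u^*L)=0$ and $h^0(u^*L)=h^1(u^*L)$. The localized Euler class of this rank-matched two-term complex, with the sign modification from Proposition~\ref{fundtwist}, evaluates to $(-1)^{h^0(u^*L)}$. The automorphisms of the cover contribute $1/|\mathrm{Aut}(u)|$ as the stacky weight. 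Summing over the finitely many isomorphism classes of étale $d$-fold covers of $D$ gives exactly the stated formula.

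The main obstacle I anticipate is the precise sign bookkeeping in the localized Euler class / ``appropriate sign modifications'': one must show that the localization of the cosection-induced class on the two-term complex $R^0\pi_*f^*L \to R^1\pi_*f^*L$ (which has virtual rank $0$ but nonzero individual ranks $h^0=h^1$ at an étale cover) produces precisely $(-1)^{h^0(u^*L)}$ and not some other sign or a nontrivial Euler class. This requires understanding how the cosection $\sigma$ pairs the obstruction directions against the deformation directions along $D$, and identifying the resulting self-pairing on $H^1(C,u^*L)$ with (up to the theta-characteristic quadratic form) the identity, so that its ``regularized Euler class'' is the discriminant sign. A secondary technical point is verifying that no stable maps other than étale covers survive in the critical genus — in particular ruling out contributions from nodal domains with contracted rational or higher-genus components — which is a standard but careful excess-intersection / dimension argument on the boundary strata of $\cM_{g}(D,d)$. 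Given Proposition~\ref{fundtwist}, though, both of these reduce to finite-dimensional linear algebra over each cover and a stratum-by-stratum dimension check, so no genuinely new input beyond the machinery of Section~2 is needed.
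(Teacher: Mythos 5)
Your proposal is correct and follows essentially the same route as the paper: a virtual-dimension count shows only $g=d(h-1)+1$ can contribute; at that genus $\cM_{g,0}(D,d)$ consists solely of \'etale covers, each a reduced isolated unobstructed point, so $R^1\pi_*f^*L$ is automatically locally free and Proposition \ref{fundtwist}/Corollary \ref{twist} apply with $e(V)=1$, while $u^*L$ being a theta characteristic on $C$ yields the sign $(-1)^{h^0(u^*L)}$ and the stacky weight $1/|\Aut(u)|$. The excess-intersection worry you raise about boundary strata is moot: by Riemann--Hurwitz any ramification, node, or contracted component forces arithmetic genus strictly greater than $d(h-1)+1$, so at this minimal genus no such configurations exist in the moduli space at all.
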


\smallskip

For the GW-invariants \emph{with} descendant insertions, we reduce
the problem to a simpler one by using a degeneration formula for the
(relative) localized GW-invariants by degenerating $X$ into a union
of $Y_1\cong X$ and $Y_2=\PP^1\times\CC$. By working out the
relative localized invariants for low degree, in section 4 we prove

\begin{theo} Let $X\to D$ be a theta characteristic over a smooth
curve of genus $h$. Let $\gamma\in H^2(D,\ZZ)$ be the Poincar\'e
dual of a point in $D$. Then the degree one and two
GW-invariants\footnote{{See \S \ref{section4.1} for the definition
of GW-invariants with not necessarily connected domains.}} with
descendants are
\begin{equation*}\langle
\prod_{i=1}^n\tau_{\aa_i}(\gamma)\rangle^{X,\bullet}_{[D],\mathrm{loc}}=
(-1)^{h^0(L)}\prod_{i=1}^n\frac{\aa_i!}{(2\aa_i+1)!}(-2)^{-\aa_i};
\end{equation*}
\begin{equation*}\langle
\prod_{i=1}^n\tau_{\aa_i}(\gamma)\rangle^{X,\bullet}_{2[D],\mathrm{loc}}
= (-1)^{h^0(L)}\,
2^{h+n-1}\prod_{i=1}^n\frac{\aa_i!}{(2\aa_i+1)!}(-2)^{\aa_i}.
\end{equation*}
\end{theo}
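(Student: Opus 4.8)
The plan is to combine the degeneration formula for relative localized GW-invariants (degenerating $X\to D$ into $Y_1\cong X$ glued to $Y_2=\PP^1\times\CC$ along a fiber) with the explicit computation of the twisted GW-invariants of $D$ furnished by Proposition \ref{fundtwist} and the vanishing/counting statement of Theorem above (the Lee--Parker formula $\langle 1\rangle^{X,\mathrm{loc}}_{d,g}$). For degree one, a connected stable map of class $[D]$ cannot degenerate into the fiber component in a nontrivial way (the only component carrying positive degree is $Y_1$), so the degree one localized invariant with descendants reduces directly to an integral over the moduli of maps to $D$ twisted by $R^1\pi_*f^*L$. Here the domain is genus $h$ (since $g=d(h-1)+1=h$ when $d=1$), and the descendant classes $\tau_{\aa_i}(\gamma)$ pull back $\psi$-classes and point classes on $D$. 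The key computation is then a Hodge-integral-type evaluation on $\Mbar_{h,n}(D,1)$ (which, since degree one maps to a genus $h$ curve are isomorphisms onto $D$ up to contracting rational tails, is essentially $\Mbar_{h,n}$), against the Euler class of the obstruction bundle built from $L$. This is where the factor $\prod_i \frac{\aa_i!}{(2\aa_i+1)!}(-2)^{-\aa_i}$ and the sign $(-1)^{h^0(L)}$ will emerge — the sign from the parity of $h^0(u^*L)$ with $u=\id_D$, and the product from a generating-function identity for $\int_{\Mbar_{1,n}}\psi_1^{\aa_1}\cdots$ type integrals after reducing along the structure of $L$, or more directly from the known closed form for the one-pointed series $\sum \tau_\aa$ in a theta-characteristic geometry.

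For degree two, I would run the degeneration formula in earnest: a degree two stable map to the degenerate total space splits as (pieces over $Y_1$) $\cup$ (pieces over $Y_2$) matched along relative markings with partitions $\vmu$ of $2$, i.e. either $\mu=(2)$ or $\mu=(1,1)$. The descendant insertions $\tau_{\aa_i}(\gamma)$ all distribute onto the $Y_1=X$ side (since $\gamma$ is a point class on $D$ and the $Y_2$ side is $\PP^1\times\CC$ contributing only through the rubber/relative invariants with trivial insertions), so on the $Y_2$-side one needs only the relative localized invariants of $\PP^1\times\CC$ with no descendants for the two partition types — these are the "low degree relative localized invariants" worked out earlier in section 4, and they supply the combinatorial gluing factors (the $|\mathrm{Aut}|$'s, the multiplicities $\prod \mu_i$ from the node-smoothing, and crucially the factor $2^{h}$, which should come from the count of connected degree-two étale covers of $D$ — there are $2^{2h}$ such covers total, $2^{2h-1}-1$ connected up to the hyperelliptic-type bookkeeping, but the relevant weighted count against signs $(-1)^{h^0(u^*L)}$ collapses to $2^{h-1}$, matching $\langle 1\rangle^{X,\mathrm{loc}}_{2,g}$). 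On the $Y_1=X$-side one gets the degree-one-with-descendants series already computed, now with a relative marking, which explains why the degree two answer is essentially the degree one answer with $(-2)^{-\aa_i}\rightsquigarrow(-2)^{\aa_i}$ (the relative/rubber contribution flips the sign of the descendant variable) times the degree-two "$\langle 1\rangle$" normalization $2^{h+n-1}$, the extra $2^{n-1}$ coming from the two ways each insertion can sit relative to the gluing.

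The main obstacle I anticipate is the bookkeeping in the degeneration formula for \emph{localized} relative invariants with \emph{disconnected} domains: one must check that the cosection $\sigma$ (and hence the localized virtual class) behaves correctly under degeneration and gluing — i.e., that the localized relative GW-invariants satisfy the same degeneration formula as ordinary relative GW-invariants, with the cosection degenerating compatibly along the two pieces and along the rubber. This is plausible because the two-form $\theta$ restricts to a two-form on each piece and the relative divisor is a fiber of $X\to D$ (where $\theta$ is built from the pairing with $dz$), but verifying that the localized virtual classes glue — that there is no correction term supported on the degeneracy locus of the relative geometry — requires care and is really the technical heart of section 4. Once that is in place, the rest is the explicit evaluation: the $Y_2$-side relative invariants (a finite check for $\mu=(2)$ and $\mu=(1,1)$), the generating-function identity producing $\frac{\aa!}{(2\aa+1)!}(-2)^{\mp\aa}$, and assembling the sign $(-1)^{h^0(L)}$ and the power of $2$ from the étale-cover count. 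I would also double-check the $n=0$ specialization against Theorem above ($\langle 1\rangle^{X,\mathrm{loc}}_{[D]}=(-1)^{h^0(L)}$ and $\langle 1\rangle^{X,\mathrm{loc}}_{2[D]}=(-1)^{h^0(L)}2^{h-1}$) as a consistency test before trusting the descendant formulas.
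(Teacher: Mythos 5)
Your degree-one argument is essentially the paper's: reduce via the locally-free case (Corollary \ref{twist}) to a twisted GW-invariant of $D$ with sign $(-1)^{h^0(L)}$ and evaluate by Faber--Pandharipande. The degree-two part, however, has two genuine gaps. First, you distribute the descendant insertions $\tau_{\alpha_i}(\gamma)$ onto the $Y_1\cong X$ side of the degeneration. Since $\gamma$ is the point class, the whole point of degenerating at the fiber over $q$ is to push all the insertions onto the simple piece $Y_2=E\times\Po$ (where everything reduces to $\cO_{\Po}(-1)$-type local invariants), leaving only the \emph{primitive} relative localized invariants $\langle 1\rangle^{(Y_1,E),\bullet}_{\eta,\mathrm{loc}}$ for $\eta=(1,1)$ and $(2)$ on the hard piece. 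With the insertions on the $Y_1$ side you would need descendant relative localized invariants of $(X,E)$, which is no easier than the original problem; the degeneration then buys you nothing.

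Second, and more seriously, you have no method for the totally ramified term $\langle 1\rangle^{(Y_1,E),\bullet}_{(2),\mathrm{loc}}$. The $(1,1)$ term is indeed an \'etale-cover count (yielding $(-1)^{h^0(L)}2^{h}$, not $2^{h-1}$ as you write), but stable maps contributing to the partition $(2)$ are branched double covers, and on that component $R^1\pi_*f^*L$ is \emph{not} locally free, so Proposition \ref{fundtwist} does not apply and no cover count is available. The paper resolves this indirectly: it first computes the single invariant $\langle\tau_1(\gamma)\rangle^{X,\bullet}_{2[D],\mathrm{loc}}$ by a direct analysis of the branched-double-cover component --- identifying the local models $zw_1=z^3w_2=\cdots=z^{2r+1}w_{r+1}=0$ of the moduli space at the points where $h^0(L\otimes\xi^{-1})\neq 0$, computing the torsion corrections to the twisted invariant via Hankel determinants, and assembling the combinatorics (Lemma \ref{lem3} and \S 4.3) --- and only then solves the degeneration formula backwards for $\langle 1\rangle^{(Y_1,E),\bullet}_{(2),\mathrm{loc}}$, finally comparing with the $h=0$ case to get the full descendant series as $(-1)^{h^0(L)}2^{h}$ times the $\cO_{\Po}(-1)$ answer. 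This direct computation is the technical heart of the theorem; your proposal locates the difficulty instead in the validity of the localized degeneration formula (which the paper takes as input from \cite{KL2}) and supplies no substitute for the missing step.
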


The above two formulas are conjectured by Maulik and Pandharipande
\cite{MaulikPand}.

\smallskip

In section 5, we discuss a few special cases where our localization
argument may enable us to compute the GW-invariants of three-folds.
For instance, if $X$ is a $\PP^1$-bundle over a surface with
$|K_S|\ne \emptyset$, we can combine the virtual localization by
torus action (\cite{GraberPand}) and our localization by holomorphic
two-form, to reduce the computation of GW-invariants of $X$ to the
curve case where the degeneration method \cite{Li2, OkounkovPand}
applies effectively.

\smallskip
\noindent{\bf Acknowledgment}: The first author is grateful to the
Stanford Mathematics department for support and hospitality while he
was visiting during the academic year 2005/2006. The second author
thanks D. Maulik for sharing with him his computation for an example
that is crucial for the second part of the paper; he also thanks E.
Ionel for stimulating discussions. We thank J. Lee and T. Parker for
stimulating questions and for pointing out several oversights in our
previous draft.

\section{Localizing virtual cycles by cosections of obstruction sheaves}

In this section, we shall show that for a Deligne-Mumford stack with
a perfect-obstruction theory, a meromorphic cosection of its
obstruction sheaf will localize its virtual cycle to the degeneracy
locus of the cosection.

More precisely, we let $M$ be a Deligne-Mumford stack endowed with a
perfect obstruction theory and let $\Ob$ be its obstruction sheaf
\cite{Li-Tian,BF}. We call $\sigma$ a meromorphic \emph{cosection}
of $\Ob$ if there is a dense open subset $U\sub M$ so that $\sigma$
is a sheaf homomorphism
\begin{equation}
\sigma: \Ob|_U\llra \cO_U.
\end{equation}
We define its degeneracy locus as the union
$$Z(\sigma)=(M-U)\cup \{s\in U\mid \sigma(s)=0: \Ob\otimes k(s)\lra
 k(s)\}.
$$

The main result of this section is

\begin{lemm}[Localization Lemma]\lab{main-lemma}
Let $M$ be a Deligne-Mumford stack endowed with a perfect
obstruction theory and suppose the obstruction sheaf admits a
meromorphic cosection $\sigma$. Then there is a canonical cycle
$$[M]\virt\loc\in \BM\bl Z(\sigma)\br
$$
whose image under the obvious $i\lsta\mh \BM\bl Z(\sigma)\br\to
\BM(M)$ is the virtual cycle
$$[M]\virt=i\lsta[M]\virt\loc \in \BM(M).
$$
\end{lemm}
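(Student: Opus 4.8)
The plan is to reduce the construction of $[M]\virt\loc$ to the case of a cone over $M$ equipped with a cosection, and then to build a localized Gysin map. First I would recall that, by Behrend--Fantechi, the virtual cycle is obtained as $[M]\virt = 0^!_{E}[\mathfrak{C}_M]$, where $E$ is a vector bundle stack containing the intrinsic normal cone $\mathfrak{C}_M$ (or, working locally, where one replaces $M$ by an affine chart with a global presentation $E^{-1}\to E^0$ of the obstruction theory and $\mathfrak{C}_M$ by the cone $C\subset E_1 = (E^{-1})^\vee$). The cosection $\sigma : \Ob|_U \to \cO_U$ is, over $U$, a surjection $E_1|_U \to \cO_U$ (after passing to the relevant quotient bundle), with kernel a subbundle; the key geometric input I would isolate is that on $U$ the cone $C$ actually lies inside $\ker(\sigma)\subset E_1$ — this is where the hypothesis that $\sigma$ kills obstructions is used, and it is the analogue of the statement that obstruction vectors for stable maps are annihilated by the two-form cosection. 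Granting this, over $U$ the section $0: U\to E_1$ meets $C$ inside the degeneracy-free locus, and the excess-intersection/localized Gysin formalism lets one refine $0^!_E[C]$ to a class supported on $Z(\sigma)$.

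The second step is to make this refinement canonical and to glue. I would work with the bundle $E_1$ on an open chart, form the \emph{localized Gysin homomorphism} associated to a surjection of a bundle onto $\cO_U$ off a closed set: concretely, blow up or use the deformation-to-the-normal-cone trick to split $E_1$ over $U$ as $\ker(\sigma)\oplus \cO_U$, observe that $C\cap (E_1|_U)$ sits in the first summand, and hence that the intersection with the zero section is proper and takes place over $Z(\sigma)$; on $M\setminus U$ one uses the ordinary Gysin map. This produces, for each chart, a class in $\BM$ of (the chart's part of) $Z(\sigma)$ whose image in $\BM$ of the chart is the usual $0^!_E[C]$. The routine-but-necessary work is checking that these locally defined classes are independent of the chosen global presentation of the obstruction theory and of the splitting, hence patch to a global $[M]\virt\loc \in \BM(Z(\sigma))$; this is a compatibility check of the same flavour as the original well-definedness of $[M]\virt$, using that any two presentations are related by stabilization $E\rightsquigarrow E\oplus(F \xrightarrow{\id} F)$, under which both the cone and the cosection pull back compatibly.

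The final step, $i_\ast[M]\virt\loc = [M]\virt$, is then essentially formal: the localized Gysin map is by construction a refinement of the ordinary one, so pushing forward along $i: Z(\sigma)\hookrightarrow M$ recovers $0^!_E[C] = [M]\virt$ chart by chart, and these agree on overlaps by the previous step.

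I expect the main obstacle to be the globalization, i.e.\ defining the localized Gysin operation intrinsically on the vector bundle \emph{stack} $E$ (rather than a chart-dependent bundle $E_1$) together with the cosection, and verifying it is compatible with stabilization; the geometric heart — that the cone $C$ lies in $\ker(\sigma)$ over $U$ — is conceptually clean but I would want to phrase it carefully so that it is manifestly presentation-independent (e.g.\ in terms of the intrinsic normal cone and the induced cosection on $h^1/h^0$ of the obstruction complex). A secondary technical point is handling the behaviour along $M\setminus U$, where $\sigma$ is only meromorphic: there one simply has no constraint, takes the full fibre of $C$, and lands in $Z(\sigma)\supseteq M\setminus U$, but one must make sure the two constructions (over $U$ and over $M\setminus U$) glue along $\partial U$, which is automatic since both are refinements of the same global $0^!_E[C]$ and $Z(\sigma)$ is closed.
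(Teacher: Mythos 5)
Your proposal follows essentially the same route as the paper: its two pillars --- that the virtual normal cone lies in the kernel of the (lifted) cosection, proved via local Kuranishi models, and a localized Gysin map refining $0^!_E$ to a class supported on $Z(\sigma)$ --- are precisely the paper's Lemma \ref{j2.1}/Lemma \ref{ker-cone} and Definition-Proposition \ref{2.5}. The only differences are in implementation: where you invoke a blow-up/excess-intersection refinement, the paper realizes the localized Gysin map topologically, perturbing the zero section by a smooth almost-splitting $\xi=\rho\cdot\check\sigma_{ex}$ of $\sigma$ so that the perturbed section misses the cone away from a neighborhood of $Z(\sigma)$ (note it is this perturbed section, not the zero section itself, whose intersection with the cone is supported near $Z(\sigma)$), and it sidesteps your chart-gluing step by decomposing the cone cycle into irreducible components and pushing forward from quasi-projective covers.
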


Here by a perfect obstruction theory we mean either the one defined
by Tian and the second author in \cite{Li-Tian} using relative
obstruction theory or by Behrend and Fantechi in \cite{BF} using
cotangent complex. As pointed out by Kresch \cite{Kresch}, the two
constructions are equivalent and produce identical virtual cycles.

Further, the localized cycle $[M]\virt\loc$ is deformation invariant
under a technical condition.


\subsection{Virtual normal cones and cosections}

To prove this lemma, we shall first prove that the virtual normal
cone associated to the obstruction theory of $M$ lies in the kernel
cone of the cosection. We begin with proving a fact about the normal
cone, which was essentially proved in \cite{BGP}.

\begin{lemm}\lab{j2.1}
Let $W\sub V$ be a closed subscheme of a smooth scheme $V$ defined
by the vanishing $s=0$ of a section $s$ of a vector bundle $E$ on
$V$; let $C_WV $ be the normal cone to $W$ in $V$, embedded in $E$
via the section $s$. Suppose the cokernel
\[
\cA=\coker\{ds: \cO_W(T_V)\lra \cO_W(E)\}
\]
admits a surjective sheaf homomorphism $\sigma\mh\cA\to\cO_W$. Then
the cone $C_WV $ lies entirely in the subbundle $F\sub E$ that is
the kernel of the composite
\begin{equation}\lab{j2.4}
\cO_W(E)\mapright{\phi}\cA\mapright{\sigma} \cO_W.
\end{equation}
\end{lemm}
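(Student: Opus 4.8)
The plan is to reduce the statement to a pointwise (fiberwise) assertion about the normal cone and then exploit the explicit description of $C_WV$ inside $E$ coming from the section $s$. Recall that since $V$ is smooth and $W=\{s=0\}$ for a section $s$ of the bundle $E$, the normal cone $C_WV$ embeds in $E|_W$ as $\spec$ of the Rees algebra of the ideal $I_W=(s)$; concretely, a point of $C_WV$ lying over $w\in W$ is the limit, along a suitable arc, of the vectors $\tfrac{1}{t}s(\gamma(t))\in E_{\gamma(t)}$ where $\gamma(t)$ is an arc in $V$ with $\gamma(0)=w$ and $s(\gamma(0))=0$. Thus every element of the fiber $(C_WV)_w$ has the form $v=\lim_{t\to 0}\tfrac1t s(\gamma(t))$ for some such arc. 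To prove $C_WV\subset F$ it therefore suffices to show that for each such arc $\gamma$ and each such limit vector $v\in E_w$, the composite $\phi$ followed by $\sigma$ annihilates $v$, i.e. $\sigma(\phi(v))=0$ in $k(w)$.

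First I would carry out the following computation. Expand $s(\gamma(t))=s(w)+t\,ds_w(\gamma'(0))+O(t^2)$; since $s(w)=0$ the leading term is $t\,ds_w(\gamma'(0))$, so if $v=\lim_{t\to0}\tfrac1t s(\gamma(t))$ exists it equals $ds_w(\gamma'(0))$, which lies in the image of $ds_w\colon T_{V,w}\to E_w$. But $F$ is precisely the kernel of $\phi\colon \cO_W(E)\to \cA=\coker(ds)$ composed with $\sigma$, and the image of $ds$ is by definition the kernel of $\phi$ itself, hence certainly contained in $\ker(\sigma\circ\phi)=F$. This already shows $v\in F_w$. To be careful about higher-order arcs and non-reduced structure, rather than arguing arc-by-arc I would instead argue algebraically: the surjection $\phi\colon\cO_W(E)\to\cA$ has kernel containing $\image(ds)$, so $\sigma\circ\phi\colon\cO_W(E)\to\cO_W$ factors through $\cO_W(E)/\image(ds)$. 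Equivalently, the induced map $E|_W\to \mathbb{A}^1_W$ of schemes over $W$ kills the subbundle $\image(ds)$ in an appropriate quotient sense; the point is that $\sigma\circ\phi$ is a \emph{linear} functional on $E|_W$ (fiberwise linear, as it is a bundle map to the trivial line bundle), so its zero locus $F$ is a subbundle, and containment of a cone in a linear subspace can be tested on the associated graded pieces.

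The cleanest route, and the one I would actually write, is the following. Consider the embedding $C_WV\hookrightarrow E|_W$ and the linear function $\ell:=\sigma\circ\phi\in H^0(W, E^\vee|_W)$ cutting out $F$. I want $\ell|_{C_WV}=0$. Pull everything back to $V$: let $\tilde E=E|_W$ with its section; $C_WV$ is cut out in $E|_W$ by the symbol/initial ideal of $I_W$ under the grading by order of vanishing along $W$. On the other hand, the composite $E\xrightarrow{\ \ell\circ(\text{proj})\ } \mathbb{A}^1$ — I mean the function on the total space of $E$ near $W$ given by pairing a vector with $\ell$ — when restricted to the graph of $s$ equals $\ell(s)$, a section of $\cO_V$ on a neighborhood of $W$; and because $\phi$ is the cokernel projection, $\ell(s)=\sigma(\phi(s))$ vanishes to order $\geq 2$ along $W$ (the differential $d(\ell\circ s)_w=\ell_w\circ ds_w=\sigma\circ\phi\circ ds=0$ since $\phi\circ ds=0$ by definition of $\cA$). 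Therefore $\ell(s)\in I_W^2$, so the linear (degree-one) function $\ell$ lies in the degree-one part of the ideal of $C_WV$ in the Rees algebra, which is exactly the statement that $C_WV\subset F$. I expect the main obstacle to be bookkeeping: making the ``order of vanishing'' / Rees-algebra argument precise when $W$ is non-reduced and $V$ is only locally of finite type, and checking that surjectivity of $\sigma$ (not just its existence) is used correctly — it is needed to ensure $F$ is a genuine subbundle of corank one rather than a degenerate subscheme, so that ``$C_WV\subset F$'' is the clean geometric statement we want. Once $\ell(s)\in I_W^2$ is established, the inclusion $C_WV\subset F$ is formal from the definition of the normal cone as $\spec\bigoplus_n I_W^n/I_W^{n+1}$.
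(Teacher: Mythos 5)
There is a genuine gap, and it sits exactly at the point the lemma is designed to handle. Your first (arc) argument parametrizes the arc by the deformation parameter $t$ itself and concludes that every limit vector equals $ds_w(\gamma'(0))$, hence lies in $\image(ds_w)$. But in the closure of the graph $\{(t^{-1}s(w),t)\}$ the function $t$ pulls back to $\xi^n$ on a parametrizing curve with $n>1$ in general, so the limit vector is the coefficient of $\xi^n$ in $s\circ\eta(\xi)$ and need \emph{not} lie in $\image(ds_w)$; your class of arcs misses most of the cone. Indeed, if $C_WV\sub\image(ds)$ always held, the lemma would be vacuous ($\phi$ kills $\image(ds)$ by definition of $\cA$) and no hypothesis on $\sigma$ would be needed; already $V=\Ao$, $E=\cO_V$, $s=x^2$ gives a cone equal to the whole fiber $E_0$ while $ds_0=0$.

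Your second (Rees-algebra) route correctly reduces the \emph{scheme-theoretic} containment $C_WV\sub F$ to showing $\tilde\ell(s)\in I_W^2$, but the inference ``$d(\tilde\ell(s))|_W=\ell\circ ds=0$, therefore $\tilde\ell(s)\in I_W^2$'' requires $d\mh I_W/I_W^2\to\Omega_V|_W$ to be injective, which fails for non-reduced $W$. Worse, the intermediate claim is simply false: take $V=\Ao$, $E=\cO_V^{\oplus2}$, $s=(x^2,x^3)$, so $I_W=(x^2)$, $ds\equiv(2x,0)$ on $W$, and let $\sigma$ be the (surjective) second projection of $\cA=\cO_W^{\oplus2}/\cO_W\cdot(2x,0)$. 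Then $\tilde\ell(s)=x^3\notin I_W^2=(x^4)$, and correspondingly $C_WV\not\sub F$ as schemes; only the \emph{set-theoretic} containment holds (the reduced cone is the first coordinate axis), and that is all the lemma asserts and all the localized Gysin map needs. The paper's proof operates precisely at that level: at a general closed point $\alpha$ of a component of the cone it chooses a curve with $\zeta\sta(t)=\xi^n$, observes $s\circ\eta=\alpha\xi^n+O(\xi^{n+1})$ so that $\eta\upmo(W)=\spec\CC[\xi]/(\xi^n)$ and $d(s\circ\eta)/d\xi\equiv n\alpha\xi^{n-1}$ is \emph{nonzero} modulo $\xi^n$ (characteristic zero), and then deduces $\sigma(\phi(\alpha))=0$ from the vanishing of $\sigma\circ\phi\circ ds$ pulled back to $\spec\CC[\xi]/(\xi^n)$. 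This gain of exactly one order of vanishing along a valuation is what your $I_W^2$ argument cannot see, so as written the proof does not close.
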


\begin{proof}
To prove the lemma, we shall view $C_WV\sub E$ as the specialization
of the section $t^{-1}s\sub E$ as $t\to 0$. More precisely, we
consider the subscheme
$$\Gamma=\{(t^{-1}s(w),t)\in E\times (\Ao-0)\mid w\in V,\ t\in\Ao-0\}.
$$
For $t\in \Ao-0$, the fiber $\Gamma_t$ of $\Gamma$ over $t\in\Ao$ is
merely the section $t^{-1}s$ of $E$. We let $\bar\Gamma$ be the
closure of $\Gamma$ in $E\times \Ao$. By definition, the central
fiber $\bar\Gamma\times_{\Ao}0\sub E$ is the normal cone $C_WV $
alluded before. Clearly, $C_WV $ is of pure dimension $\dim V$.

Now let $N\sub C_WV $ be any irreducible component and let
$\alpha\in N$ be a general closed point of $N$. Suppose $\alpha$
does not lie in the zero section of $E$. Then we can find a regular
irreducible curve $C$ and a morphism $\iota\mh C\to \bar\Gamma$ that
passes through $\alpha$, say $\iota(0)=\alpha$. We let
$$\eta: C\lra V\and \zeta:  C\lra \Ao
$$
be the projections induced by those from $E\times \Ao$ to $V$ and to
$\Ao$; we require that $\zeta$ dominates $\Ao$. We then choose a
uniformizing parameter $\xi$ of $C$ at $0$ so that
$\zeta\sta(t)=\xi^n$ for some $n$. Because $\iota(0)=\alpha$,
$\xi^{-n} s\circ \eta(\xi)$ specializes to $\alpha$; hence
$s\circ\eta$ has the expression
$$s\circ\eta=\alpha\xi^n+O(\xi^{n+1}),
$$
and thus $\eta\upmo(W)=\spec\CC[\xi]/(\xi^n)$. In particular,
pulling back the exact sequence
$$\cO_W(T_V)\mapright{ds} \cO_W(E)\lra \cA\lra 0
$$
via the induced morphism
$$\bar\eta=\eta|_{\eta\upmo(W)}:\bar\eta\upmo(W)=\spec\CC[\xi]/(\xi^n)
\lra W,
$$
we obtain
\begin{equation}\lab{j2.3}
\bar\eta\sta(T_V) \mapright{\bar\eta\sta(ds)} \bar\eta\sta(E)
\mapright{\bar\eta\sta(\phi)} \bar\eta\sta(\cA)\lra 0
\end{equation}
in which $\bar\eta\sta(\phi)$ is the pullback of $\phi$ in
(\ref{j2.4}) and
$$\bar\eta\sta(ds)=d(\alpha\xi^n+O(\xi^{n+1}))\equiv n\alpha\xi^{n-1}d\xi\mod \xi^n.
$$

On the other hand, because \eqref{j2.3} is exact, the composition
\begin{equation}\lab{2.4}
\bar\eta\sta(\sigma)\circ \bar\eta\sta(\phi)\circ\bar\eta\sta(ds)=0.
\end{equation}
Now suppose $(\sigma\circ\phi)(\alpha)\ne0$. Then the vanishing
(\ref{2.4}) implies that $\xi^{n-1}=0\in \CC[\xi]/(\xi^n)$, a
contradiction. Therefore, $\alpha$ lies in the kernel of
\eqref{j2.4}, and so does the cone $C_WV$. This proves the lemma.
\end{proof}

In case $\sigma$ is a meromorphic cosection of $E$ and $U$ is the
largest open subset over which $\sigma$ is defined and surjective,
we define the (cone) kernel $E(\sigma)$ of $\sigma$ to be the union
of the restriction to $M-U$ of $E$ with the kernel of $\sigma|_{U}$:
$$E(\sigma)=E|_{M-U}\cup \mathrm{Ker}(\sigma\mh E|_U\lra\cO_U).
$$

\begin{coro} Let $W\sub V$ be as in Lemma \ref{main-lemma} except
that $\sigma$ is only assumed to be a meromorphic cosection. Then
the cone $C_WV$ lies entirely in the kernel $E(\sigma)$.
\end{coro}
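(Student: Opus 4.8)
The plan is to reduce the meromorphic case to the surjective case treated in Lemma~\ref{j2.1} by a restriction-and-closure argument. First I would work over the open set $U\sub M$ where $\sigma$ is defined and surjective, and set $W\ucirc=W\cap\eta\upmo(U)$, $V\ucirc=\eta\upmo(U)$ (abusing notation, I mean the relevant opens of $W$ and $V$ lying over $U$). On $V\ucirc$ the hypotheses of Lemma~\ref{j2.1} are satisfied verbatim, so the normal cone $C_{W\ucirc}V\ucirc$ lies entirely in the subbundle $F\ucirc\sub E|_{V\ucirc}$ cut out by $\sigma\circ\phi$; this is exactly $E(\sigma)|_{V\ucirc}$.

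Next I would use the fact that the normal cone behaves well under restriction to opens: $C_{W\ucirc}V\ucirc = (C_WV)\times_W W\ucirc$, i.e. $C_{W\ucirc}V\ucirc$ is the open subcone of $C_WV$ obtained by removing the fibers over $W\cap\eta\upmo(M-U)$. So each irreducible component $N\sub C_WV$ either maps into $M-U$ (and then $N\sub E|_{M-U}\sub E(\sigma)$ trivially, by the very definition of $E(\sigma)$), or it meets the part lying over $U$ in a dense open subset. In the latter case, the generic point of $N$ lies over $U$, so $N\cap (E|_{V\ucirc})$ is a dense subset of $N$ contained in $F\ucirc\sub E(\sigma)$; since $E(\sigma)$ is closed in $E$, taking closures gives $N\sub E(\sigma)$. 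As $C_WV$ is the union of its finitely many irreducible components, $C_WV\sub E(\sigma)$.

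Alternatively — and this is essentially the same argument repackaged at the level of the degeneration — one can run the specialization picture of the proof of Lemma~\ref{j2.1} directly: take an irreducible component $N$ of $C_WV$ with general point $\alpha$ not in the zero section, choose the curve $\iota\mh C\to\bar\Ga$ through $\alpha$, and note that if $\alpha$ lies over a point of $U$ then the local computation $\bar\eta\sta(ds)\equiv n\alpha\xi^{n-1}d\xi$ together with exactness of the pulled-back sequence forces $(\sigma\circ\phi)(\alpha)=0$, exactly as before; if instead $\alpha$ lies over $M-U$ there is nothing to prove since $N\sub E|_{M-U}$. The only point needing care is that $\sigma\circ\phi$ is genuinely a well-defined morphism $E|_U\to\cO_U$ and that the open subcone $C_{W\ucirc}V\ucirc$ is dense in the union of those components of $C_WV$ not supported over $M-U$; both are standard, the latter because $C_WV$ has pure dimension $\dim V$ and $\eta\upmo(U)$ is dense in $V$.

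The main obstacle is really only bookkeeping: one must be careful that ``the cone lies in $E(\sigma)$'' is interpreted correctly over the two pieces of the stratification $M=U\sqcup (M-U)$, and that no irreducible component of $C_WV$ is entirely supported over $M-U$ in a way that is invisible to the generic-point argument — which is handled by simply including $E|_{M-U}$ into the definition of $E(\sigma)$ in the first place. No new geometric input beyond Lemma~\ref{j2.1} is needed; the corollary is a formal consequence together with the compatibility of normal cones with restriction to open substacks.
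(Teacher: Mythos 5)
Your proposal is correct and is exactly the argument the paper intends (the corollary is stated without proof as an immediate consequence of Lemma \ref{j2.1}): over the open set $U$ where $\sigma$ is surjective one applies the lemma verbatim, over $M-U$ the inclusion is built into the definition of $E(\sigma)$, and the closedness of $E(\sigma)$ together with the compatibility of normal cones with restriction to opens handles the components meeting $\eta\upmo(U)$. No gap.
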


In the following, we shall construct a localized Gysin map
by intersecting with smooth sections that ``almost'' split the
cosection $\sigma$.

\subsection{Localized Gysin maps}

Let $\sigma: E\to \cO_M$ be a meromorphic cosection of the vector
bundle $E$ over a quasi-projective complex scheme $M$ with
$Z(\sigma)\sub M$ and $E(\sigma)$ its degeneracy locus and its
kernel cone. For the bundle $E\to M$ we recall that the topological
Gysin map
\[
s_E^!:A_*E\lra A\lsta M
\]
is defined by intersecting any cycle $W\in Z\lsta E$ with the zero
section $s_E$ of $E$. To define the localized Gysin homomorphism, we
shall use smooth section of $E$ that almost lifts $1\in
\Gamma(\cO_M)$.

For this, we first pick a splitting of $\sigma$ away from the
degeneracy locus $Z(\sigma)$. Because $\si$ is surjective away from
$Z(\sigma)$, possibly by picking a hermitian metric on $E$
we can find a smooth section $\check{\si}\in C^\infty\bl
E|_{M-D(\si)}\br$ so that $\si\circ\check{\si}=1$.

Next, we pick a sufficiently small (analytic) neighborhood $\cU$ of
$Z(\sigma)\sub M$ which is properly homotopy equivalent to
$Z(\sigma)$. Because $M$ is quasi-projective, such a neighborhood
$\cU$ always exists. We then extend $\check \sigma|_{M-\cU}$ to a
smooth section $\check{\si}_{ex} \in C^\infty\bl E\br$ and pick a
smooth function $\rho\mh M\to \RR^{>0}$ so that
$$\xi=\rho\cdot\check\sigma_{ex}\in C^\infty\bl E\br
$$
is a small perturbation of the zero section of $E$.

Now let $W\subset E(\si)$ be any closed subvariety. By fixing a
stratification of $W$ and of $M$ by complex subvarieties, we can
choose the extension $\check{\si}_{ex}$ and the function $\rho$ so
that the section $\xi$ intersects $W$ transversely. As a
consequence, the intersection $W\cap \xi$, which is of pure
dimension, has no real codimension 1 strata. Henceforth, it defines
a closed oriented Borel-Moore chain cycle in $M$. But on the other
hand, since $\sigma\circ\xi |_{M-\cU} =\rho\in C^\infty\bl
M-\cU\br$, $\xi$ is disjoint from $W$ over $M-\cU$. Thus
$W\cap\xi\sub E|_\cU$ is a closed Borel-Moore cycle in $E|_\cU$. In
this way, under the projection $\pi_\cU\mh E|_\cU\to\cU$, we obtain
a class
$$[\pi_\cU(W\cap \xi)]\in \BM(\cU).
$$
Applying the standard transversality argument, one easily shows that
this class is independent of the choice of $\cU$ and the section
$\xi$; thus it only depends on the cycle $W$ we begin with.
Furthermore, because $\cU$ is properly homotopy equivalent to
$Z(\sigma)$, $\BM(Z(\sigma))\cong \BM(\cU)$. Therefore, the newly
constructed class can be viewed as a class in $\BM(Z(\sigma))$.

\begin{defiprop}\lab{2.5}
We define the localized Gysin map
$$s_{E,\mathrm{loc}} ^!: Z\lsta E(\sigma)\lra \BM(Z(\sigma))
$$
to be the linear map that sends any subvariety $W\sub Z\lsta
E(\sigma)$ to the cycle $[\pi_\cU(W\cap\xi)]\in \BM(Z(\sigma))$. It
sends any two rationally equivalent cycles in $Z\lsta E(\sigma)$ to
the same homology class in $\BM(Z(\sigma))$; therefore it factors
through a homomorphism from the group of cycle classes:
$$s_{E,\mathrm{loc}} ^!: A\lsta E(\sigma)\lra \BM(Z(\sigma)).
$$
\end{defiprop}

\begin{proof}
The proof is standard and shall be omitted.
\end{proof}

Next, we shall investigate the case for a DM-stack with
perfect-obstruction theory and a cosection of its obstruction sheaf.

\subsection{DM-stack with perfect obstruction theory}

Let $M$ be a DM-stack with perfect obstruction theory and with a
cosection $\sigma\mh\Ob_M\to\cO_M$ of its obstruction sheaf; let $E$
be a vector bundle on $M$ whose sheaf of sections $\cE$ surjects
onto $\cO_M$; by the construction of virtual cycle, the obstruction
theory of $M$ provides a unique cone cycle $W\in Z\lsta E$, the
virtual normal cone of $M$.

\begin{lemm}\lab{ker-cone}
Let $\tilde\sigma\mh \cE\to \cO_{M}$ be the composite of $\sigma$
with the quotient homomorphism $\cE\to\Ob_{M}$. Then the (virtual)
normal cone $W\in Z\lsta E$ lies in the (cone) kernel
$E(\tilde\sigma)$ of $\tilde\sigma$.
\end{lemm}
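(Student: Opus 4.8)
The plan is to reduce Lemma \ref{ker-cone} to the already-established scheme-level statement, Lemma \ref{j2.1} and its Corollary, by choosing a suitable local presentation of the obstruction theory. First I would recall that the virtual normal cone $W\in Z\lsta E$ is constructed by working \'etale-locally on $M$: one covers $M$ by \'etale charts on which the perfect obstruction theory is represented by a two-term complex $[E^{-1}\to E^0]$ of vector bundles, and on such a chart one realizes $M$ locally as the zero locus $W_0\subset V$ of a section $s$ of a vector bundle $E'$ on a smooth scheme $V$, with $E^0$ a quotient of $T_V|_{W_0}$ and $E^{-1}$ a quotient of $E'|_{W_0}$, in such a way that the obstruction sheaf is $\Ob_M=\coker(ds\dual\text{-type map})$—more precisely $\Ob_M$ is the cokernel of $\cO_{W_0}(T_V)\to\cO_{W_0}(E')$ induced by $ds$, pulled back appropriately. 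The intrinsic normal cone, pushed into $E$ via the chosen surjection $\cE\to\Ob_M$, restricts on this chart to the image of the ordinary normal cone $C_{W_0}V\hookrightarrow E'$.

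With this local picture in hand, the second step is immediate: on each chart the meromorphic cosection $\sigma\mh\Ob_M\to\cO_M$ restricts to a meromorphic cosection of $\cA=\coker\{ds:\cO_{W_0}(T_V)\to\cO_{W_0}(E')\}$, and its degeneracy locus matches $Z(\sigma)$ restricted to the chart. The Corollary to Lemma \ref{j2.1} then says precisely that $C_{W_0}V$ lies in the cone kernel $E'(\sigma)$. Since the surjection $E'|_{W_0}\to E^{-1}$ (hence $\cE$ on the chart) intertwines $\sigma$ on $\Ob_M$ with the induced map on $\cA$, the pushed-forward cone lies in $E(\tilde\sigma)$ over that chart. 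Because the condition ``lies in the cone kernel $E(\tilde\sigma)$'' is a closed condition that can be checked \'etale-locally, and the local pieces glue to the global cycle $W$, we conclude $W\subset E(\tilde\sigma)$ globally. One should also note that $\tilde\sigma$ and $\sigma$ have the same degeneracy locus and the same kernel cone pulled back along $\cE\to\Ob_M$, so it does not matter which of the two we phrase the statement with.

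The main obstacle, and the step deserving the most care, is verifying compatibility of the cosection with the chart data—i.e. that the surjection $\cE\to\Ob_M$ and the local identification $\Ob_M\cong\cA$ are compatible on overlaps, so that the locally defined kernel cones $E(\tilde\sigma)|_{\text{chart}}$ are the restrictions of a single global cone $E(\tilde\sigma)$, and that the normal cone $W$, which a priori is only defined via the global intrinsic normal cone $\mathfrak{C}_M$ and the obstruction theory, does coincide chart-by-chart with the classical $C_{W_0}V$ embedded via $s$. This is essentially the content of the construction of the virtual cycle in \cite{Li-Tian,BF} together with Kresch's comparison \cite{Kresch}, so I would cite those for the gluing and the chart-wise description, and then the only genuinely new input is the one-chart computation, which is exactly Lemma \ref{j2.1}. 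A secondary, minor point is to make sure the ``meromorphic'' aspect causes no trouble: over the dense open $U$ where $\sigma$ is defined and surjective the argument is as above with $F=\ker\sigma$, while over $M-U$ the kernel cone is all of $E$, so $W\subset E(\tilde\sigma)$ is automatic there.
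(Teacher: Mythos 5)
Your plan is the same as the paper's: reduce to the one-chart statement, Lemma \ref{j2.1} (and its Corollary for the meromorphic case), by presenting $M$ locally as a zero locus and checking the closed condition $W\subset E(\tilde\sigma)$ pointwise over the open set $U$ where $\sigma$ is surjective. The paper does this via the formal completion at a closed point and a Kuranishi map $f\in\kk[\![z]\!]\otimes E_p$ rather than via an \'etale chart with a global section of a bundle, but that is a cosmetic difference.

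The one place where your proposal and the paper part ways is exactly the step you flag as ``the main obstacle'' and then propose to discharge by citing \cite{Li-Tian,BF,Kresch}: the compatibility of the local presentation with the cosection, i.e.\ the existence of isomorphisms identifying $\coker(df)$ with the pullback of $\Ob_M$ \emph{so that the induced map to $\cO$ is the pullback of $\sigma$}. The paper states explicitly that in the \cite{Li-Tian} formulation this compatibility ``was used but not checked'' there, and the bulk of its proof of Lemma \ref{ker-cone} is devoted to supplying that verification (the argument with the trivial extension $A_2=A\oplus\Omega_B\otimes_BA$, the obstruction class $o\in Ob_A\otimes_AJ$, and the identification of $\coker(\psi)$ and $\coker(\psi\dual)$ with $\Omega_A$ and $Ob_A$). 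So citing \cite{Li-Tian} for this point is not available as stated; you would either have to reproduce an argument of that kind, or work throughout in the \cite{BF} cotangent-complex formulation, where (as the paper notes) the required compatibility does follow from the definition of a perfect obstruction theory, and then invoke Kresch's comparison to transfer the conclusion. With that caveat made explicit, your outline is correct and buys nothing different from the paper's proof.
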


\begin{proof}
Let $U\sub M$ be the largest open subset over which $\sigma\mh
\Ob_{M}\to \cO_M$, and hence the composite $\tilde\sigma\mh
\cE\to\cO_M$, is surjective. We let $F\sub E|_U$ be the kernel
subbundle of $\tilde\sigma$. To prove the lemma we only need to show
that the restriction of the cone $W$ over $U$ is entirely contained
in $F$. Since this is a local property, we only need to prove this
over every closed point $p\in U$. And by replacing a neighborhood of
$p\in U$ by its \'etale covering, we can assume without loss of
generality that $M$ is a scheme.

To proceed, we recall the construction of the cone $W$ at $p$. We
let $\hat M$ be the formal completion of $M$ at $p$ with $\rho\mh
\hat M\to M$ the tautological morphism; we let $E_{p}$ be the fiber
of $E$ at $p$; let $n$ and $(z)$ be
$$n=\rank
E+\text{vir}.\dim M,\quad (z)=(z_1,\cdots,z_n).
$$
Then the obstruction theory of $M$ at $p$ provides a Kuranishi map
$f\in \kk[\![z]\!]\otimes E_{p}$ so that $\hat M$ is isomorphic to
the subscheme $(f=0)\sub {\hat V}=\spec\kk[\![z]\!]$. We claim that
we can choose $f$ and two isomorphisms $\psi_1$ and $\psi_2$ as
shown so that the cokernel $\coker(df)$ fits into the commutative
diagram
\begin{equation}\lab{comd2.4}
\begin{CD} \cO_{\hat M}(T_{\hat V})@>{df}>>
\cO_{\hat M}\otimes E_{p} @>{\text{pr}}>>\coker(df) @>>> \,\cO_{\hat
M}\\
@. @V{\cong}V{\psi_1}V @V{\cong}V{\psi_2}V @|\\
@.\rho\sta\cE @>{}>> \rho\sta\Ob_{M} @>{\rho\sta(\sigma)}>> \
\cO_{\hat M}.
\end{CD}
\end{equation}

Before we prove the claim, we shall see how this leads to the proof
of Lemma. Once we have this diagram, then since $\rho(p)\in U$, the
pullback sheaf homomorphism $\rho\sta(\sigma)\circ \psi_2$ is
surjective. Hence by the proof of Lemma \ref{j2.1}, the fiber of the
normal cone $C_{\hat M}{\hat V}$ over $p$ is entirely contained in
the kernel vector space:
$$C_{\hat M}{\hat V}\times_{\hat M} p\sub
F_{p}=\mathrm{ker}\{\rho\sta(\sigma)\circ \psi_2\circ
\text{pr}|_{p}: E_{p}\lra \CC\}.
$$
On the other hand, under the isomorphism $\psi_1$ the vector space
$F_{p}$ is isomorphic to the restriction to $p$ of the kernel vector
bundle $F=\mathrm{ker}\{\tilde\sigma|_U: \cE|_U\lra\cO_U\}$, and the
fiber of the cone $C_{\hat M} {\hat V}$ over $p$ is identical to the
fiber over $p$ of the cone $W$ \cite[Lemma 3.3]{Li-Tian}. Therefore,
$W|_p\sub F|_p$. Since $p$ is arbitrary, this proves that the
support of $W$ over $U$ is entirely contained in the subbundle
$F\sub E|_U$.

We now prove the claim. Indeed, the existence of $\psi_1$ and
$\psi_2$ follows from the definition of the perfect obstruction
theory based on the cotangent complex of $M$ phrased in \cite{BF}.
In case we use the perfect obstruction theory phrased in
\cite{Li-Tian}, their existence follows directly from the proof of
Lemma 2.5 in \cite{Li-Tian}, once the following technical
requirement is met\footnote{This requirement is the consistency of
the properties (1) and (3) for $k=1$ in \cite[Lemma 2.5]{Li-Tian}.
It was used but not checked. Here we provide the details of it.}.

To state and verify this requirement, we let $p\in M$ be as before
(and $M$ is a scheme as assumed) and let $S\sub M$ be an affine
neighborhood of $p$. Roughly speaking, the requirement is that we
can find a complex of locally free sheaves
$[\cE_1\mapright{\sigma}\cE_2]$ so that in addition to that the
cokernel of $\sigma$ and its dual $\sigma\dual$ are $\Ob_S$ and
$\Omega_S$, respectively, the homomorphism $\sigma$ defines the
obstruction to first order extensions.


We now set up more notation. We embed $S$ as a closed subscheme of a
smooth affine scheme $V$ of $\dim V=\dim T_p S$. We let
$A=\Gamma(\cO_S)$, let $B=\Gamma(\cO_V)$, let $\iota\sta\mh B\to A$
be the quotient homomorphism with $I\sub B$ the ideal
$I=\iota^{\ast-1}(0)$; thus $B/I=A$. We then consider the trivial
(ring) extension of $A$ by $\Omega_B\otimes_BA$: $A_2=A\oplus
\Omega_B\otimes_BA$; we let $d\mh B\to\Omega_B$ be the differential
and consider the homomorphism $\xi\mh B\to A_2$ defined via
$b\mapsto(\iota\sta b,db)$. Then by taking $J\sub A_2$ the ideal
generated by $\xi(I)$ and letting $A_1=A_2/J$, the homomorphism
$\xi$ descends to $\xi_1\mh A\to A_1$. Since $J^2=0$, the triple
$(A_1,A_2,\xi_1)$ associates to an obstruction class
$$o\in Ob_A\otimes_{A}J,\quad Ob_A=\Gamma(S,\Ob_M),
$$
to lifting $\xi_1$ to $A\to A_2$. Now let $E_2$ be a free $A$-module
making $Ob_A$ its quotient module. We lift $o$ to an $\hat o\in
E_2\otimes_A J$, which via $E_2\otimes_A J\sub E_2\otimes_B\Omega_B$
defines a homomorphism
$$\psi: E_2\dual\lra \Omega_B\otimes_BA.
$$
We claim that the cokernel of $\psi$ and $\psi\dual$ are $\Omega_A$
and $Ob_A$, respectively. After this, the complex
$[\Omega_B\dual\otimes_BA\to E_2]$ will satisfy the requirement for
Lemma 2.5 in \cite{Li-Tian} for $k=1$; its inductive proof provides
us with the Kuranishi map we seek for.

We now prove the claim. We let $R$ be the cokernel of $\psi$ and
shall prove that as quotient sheaves of $\Omega_B\otimes_BA$,
$R=\Omega_A$. First, by viewing $J$ as an $A$-module, it is a
submodule of $\Omega_B\otimes_BA$ satisfying
$\Omega_B\otimes_BA/J=\Omega_A$. Because $\psi$ is defined by an
element in $E_2\otimes_A J$, $\psi(E_2\dual)\sub J$. Hence
$\Omega_A$ is canonically a quotient sheaf of $R$, say via $\tau\mh
R\to\Omega_A$. To show that $\tau$ is an isomorphism, we let
$T=A\oplus R$, let $T_0=A\oplus \Omega_A$, let $K=\ker(\tau)$, and
let $f\mh A\to A\oplus \Omega_A$ be the tautological homomorphism
defined via $a\mapsto (a,da)$. Then the triple $(T,T_0,f)$
associates to an obstruction class $\bar o\in Ob_A\otimes_A K$ to
lifting $f$ to $A\to A\oplus R$. However, by the base change
property, $\bar o$ is the image of $o$ under $Ob_A\otimes_A J\to
Ob_A\otimes_A K$.  Since $\hat o$ is a lift of $o\in Ob_A\otimes_A
J$, by the commutativity
$$\begin{CD}
E_2\otimes_A J @>>> E_2\otimes_A K\\
@VVV @VVV\\
Ob_A\otimes_A J @>>> Ob_A\otimes_A K
\end{CD}
$$
and by the fact that the element $\hat o$ has vanishing image under
$E_2\otimes_B\Omega_B\to E_2\otimes_A R$, we see immediately that
$\bar o=0$. Thus $f$ lifts to $A\to A\oplus R$. By the property of
the cotangent module, this lifting is given by a homomorphism of
$A$-modules $\varphi\mh \Omega_A\to R$ so that the composite
$\tau\circ \varphi\mh \Omega_A\to\Omega_A$ is the homomorphism
associated to $f\mh A\to A\oplus \Omega_A$, thus
$\tau\circ\varphi=id_A$. It follows that $\Omega_A$ is a direct
summand of $R$. But for $\mm\sub A$ the maximal ideal of $p\in S$,
we have $R\otimes_A A/\mm=\Omega_A\otimes A/\mm$; thus possibly
after shrinking $S$, $\Omega_A=R$. This proves that $R=\Omega_A$.

It remains to show that $\psi\dual\mh\Omega_B\dual\otimes_BA\to E_2$
has cokernel $Ob_A$. The proof is similar. We first show that the
composite $\Omega_B\dual\otimes_BA\to E_2\to Ob_A$ is trivial. Let
$T_0\pri=A$ and let $T_1\pri=A\oplus \Omega_B\otimes_BA$. The
identity $id\mh A\to T_0\pri$ obviously lifts to $A\to T_1\pri$.
Thus the obstruction $\tilde o$ to lifting $id$ to $A\to T_1\pri$ is
trivial. But by the base change property, $\tilde o$ is the image of
$o\in Ob_A\otimes_A J$ under $Ob_A\otimes_A J\to Ob_A\otimes_B
\Omega_B$, which also is the composite $\Omega_B\dual\otimes_BA\to
E_2\to Ob_A$. Thus this composite vanishes, which proves that as
quotient modules of $E_2$, $\coker(\psi\dual)$ surjects onto $Ob_A$.

To prove $\coker(\psi\dual)=Ob_A$, we shall use the property that
$S$ has a perfect obstruction theory. Namely, there is a free
$A$-module $E_1$ and a homomorphism $\eta\mh E_1\to E_2$ such that
$\coker(\eta)=\Ob_A$ and $\coker(\eta\dual)=\Omega_A$. We now let
$Q_1=\image(\psi\dual)$ and $Q_2=\image(\eta)$, both as submodules
of $E_2$. Because $\coker(\psi\dual)\cong\coker(\eta\dual)$, for
every integer $m$, $Q_1\otimes_A A/\mm^m$ and $Q_2\otimes_A A/\mm^m$
have the same dimension as vector spaces; the same holds true for
$\Tor^1(Q_1,A/\mm^m)$ and $\Tor^1(Q_2,A/\mm^m)$. Thus
$$\dim_{\CC}Ob_A\otimes
A/\mm^n=\dim_{\CC}\coker(\psi\dual)\otimes_A A/\mm^n;
$$
since one is the quotient of the other, this implies that
$Ob_A\otimes_A A/\mm^n=\coker(\psi\dual)\otimes_A A/\mm^n$ for all
$m$. Hence after shrinking $S$ if necessary,
$\coker(\psi\dual)=Ob_A$. This completes the proof of the Lemma.
\end{proof}

With this lemma, we are ready to construct the localized virtual
cycle
$$[M]\virt\loc\in \BM(Z(\sigma)).
$$
We first write $W=\sum m_i W_i$ as the weighted sum of (reduced)
irreducible closed substacks $W_i\sub E$. To each such $W_i$, we let
$M_i$ be the image stack of $W_i\to M$ and pick a quasi-projective
$Z_i$ together with a proper and generically finite $\rho_i: Z_i\lra
M_i$. We then pull back the pair $W_i\sub E|_{M_i}$ and the
cosection:
$$\tilde W_i=W_i\times_{M}Z_i\sub \tilde E_i=E\times_{M}Z_i,
\quad\tilde\sigma_i=\rho_i\sta(\tilde\sigma): \tilde E_i\lra
\cO_{Z_i}.
$$
By the previous lemma, the cycle $\tilde W_i$ lies in the cone
kernel $\tilde E_i(\tilde\sigma_i)$. Hence we can apply the
localized Gysin map to the class $[\tilde W_i]$ to obtain
$$s_{\tilde E_i,\mathrm{loc}} ^![\tilde W_i]\in \BM(Z(\tilde \sigma_i)).
$$

We now let $\eta_i\mh Z(\tilde \sigma_i)\to Z(\sigma)$ be the
induced map. Because $\rho_i$ is proper, $\eta_i$ is also proper.
Thus it induces a homomorphism of Borel-Moore homology
$$\eta_{i\ast}: \BM(Z(\tilde\sigma_i))\lra
\BM(Z(\sigma)).
$$
Finally, we let $\deg(\rho_i)$ be the degree of $\rho_i$, and define
the localized virtual cycle
$$[M]\virt\loc=\sum \frac{m_i}{\deg(\rho_i)} \eta_{i\ast}\bl
s_{\tilde E_i,\mathrm{loc}} ^![\tilde W_i]\br \in \BM(Z(\sigma)).
$$

\subsection{Deformation invariance of the localized virtual cycles}

Like the ordinary virtual cycle, the localized virtual cycle is
expected to remain constant under deformation of complex structures.
In the following, we shall prove this under a technical assumption.

We let $t\in T$ be a pointed smooth affine curve; let $\pi\mh M\to
T$ be a DM-stack over $T$ with a perfect obstruction theory and
obstruction sheaf $\Ob_M$; we suppose $M$ has a perfect relative
obstruction theory with relative obstruction sheaf $\Ob_{M/T}$, as
defined in \cite{Li-Tian}. By definition, $\Ob_M$ and $\Ob_{M/T}$
fits into the exact sequence
\begin{equation}\lab{2.6}
\lra\cO_M\lra
\Ob_{M/T} \lra \Ob_{M}\lra 0.
\end{equation}
Note that the restriction of $\Ob_{M/T}$ to each fiber
$M_t=M\times_T t$ is the obstruction sheaf $\Ob_{M_t}$ of $M_t$.

We now suppose there is a cosection
$$
\sigma: \Ob_{M/T}\lra \cO_{M}.
$$
We let $Z(\sigma)$ be the union of $Z(\sigma_t)\sub M_t$ for all
$t\in T$.

\begin{prop} \lab{constancy} Suppose $Z(\sigma)$ is proper over $T$;
suppose the cosection $\sigma$ lifts to a cosection $\sigma\pri\mh
\Ob_{{M}}\to\cO_{{M}}$. Then the localized virtual cycles
$[{M}_t]\virt\loc$ are constant in $t$ as classes in
$H\lsta(Z(\sigma))$.
\end{prop}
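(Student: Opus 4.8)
The plan is to mimic the classical proof of deformation invariance of the ordinary virtual cycle (as in \cite{Li-Tian, BF}) while carrying the cosection along at every step. The key point is that the hypotheses supply \emph{two} cosections: the relative one $\sigma\mh\Ob_{M/T}\to\cO_M$, whose restriction to each fiber gives $\sigma_t\mh\Ob_{M_t}\to\cO_{M_t}$, and an absolute lift $\sigma\pri\mh\Ob_M\to\cO_M$ of $\sigma$ through the surjection $\Ob_{M/T}\to\Ob_M$ of \eqref{2.6}. First I would apply the construction of \S2.3 to $M$ itself, using the \emph{absolute} obstruction theory and the cosection $\sigma\pri$: choosing a vector bundle $E\to M$ surjecting onto $\Ob_M$, the virtual normal cone $W\in Z\lsta E$ of $M$ lies, by Lemma \ref{ker-cone}, in the cone kernel $E(\tilde\sigma\pri)$ of the induced $\tilde\sigma\pri\mh\cE\to\cO_M$. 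Hence the localized Gysin map of Definition-Proposition \ref{2.5} produces a class $[M]\virt\loc\in\BM(Z(\sigma\pri))$. Since $\sigma\pri$ is a lift of $\sigma$, one checks $Z(\sigma\pri)\subseteq Z(\sigma)$, so we may regard $[M]\virt\loc\in\BM(Z(\sigma))$; and because $Z(\sigma)\to T$ is proper, $\BM(Z(\sigma))$ is the right receptacle for a family statement.

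Next I would set up the specialization-to-a-fiber argument. Fix $t_0\in T$ and pick a local parameter $s$ at $t_0$; the fiber $M_{t_0}\sub M$ is the zero locus of $\pi\sta s$. The relative obstruction theory gives $\Ob_{M_{t_0}}=\Ob_{M/T}|_{M_{t_0}}$, and restricting $\sigma$ gives the fiberwise cosection $\sigma_{t_0}$, whose degeneracy locus $Z(\sigma_{t_0})$ is a closed substack of $Z(\sigma)$. The standard deformation-to-the-normal-cone / specialization construction relates the virtual normal cone $W$ of $M$ to the virtual normal cone $W_{t_0}$ of $M_{t_0}$: one forms the family of cones over $M\times\Ao$ obtained from the Kuranishi data, exactly as in the proof of Lemma \ref{j2.1}, and the central fiber over $s=0$ degenerates $W|_{M_{t_0}}$ to (a cone supported on) $W_{t_0}\oplus\underline{\CC}$ inside $E|_{M_{t_0}}\oplus\underline{\CC}$, compatibly with the cosections. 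The crucial compatibility is that throughout this degeneration the cones remain inside the relevant cone kernels — this follows by applying the Corollary after Lemma \ref{j2.1} to the Kuranishi presentation, using $\sigma$ on the relative obstruction sheaf. Once this is in place, one applies the localized Gysin homomorphism to the whole family and uses its compatibility with rational equivalence (Definition-Proposition \ref{2.5}): the class $s^!_{E,\loc}[W]$ restricted to the fiber over a general $t$ equals $[M_t]\virt\loc$, while over $t_0$ it equals $[M_{t_0}]\virt\loc$; since both are obtained from a single cycle class on $E(\tilde\sigma)$ by the localized Gysin map, they agree in $\BM(Z(\sigma))$.

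To organize this cleanly I would phrase it as: the localized virtual cycle $[M]\virt\loc\in\BM(Z(\sigma))$ constructed from the absolute theory restricts, under the Gysin/refined-intersection map $\BM(Z(\sigma))\to\BM(Z(\sigma_t))$ for the inclusion $M_t\hookrightarrow M$ (or rather the specialization map associated to the Cartier divisor $M_t\sub M$), to $[M_t]\virt\loc$; and since all fibers $t$ are rationally equivalent as points of the smooth curve $T$, the refined Gysin maps for different $t$ agree after pushing forward to $\BM(Z(\sigma))$, because $Z(\sigma)$ is proper over $T$ and hence the pushforward $\BM(Z(\sigma)\times_T t)\to\BM(Z(\sigma))$ is defined and the composition $\BM(Z(\sigma))\to\BM(Z(\sigma_t))\to\BM(Z(\sigma))$ is the identity on the relevant classes. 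This is the same mechanism by which ordinary virtual cycles are shown to be deformation invariant, and the localized Gysin map of \S2.2 is built precisely so as to commute with these operations.

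\textbf{Main obstacle.} The hard part will be establishing the cosection-compatible specialization of the virtual normal cone, i.e. checking that the family of cones interpolating between $W|_{M_{t_0}}$ and $W_{t_0}$ stays inside the appropriate cone kernel, so that the localized Gysin map can be applied uniformly in the family. This requires redoing the Kuranishi-map analysis (as in the proof of Lemma \ref{ker-cone}) in the relative setting over $T$ and matching it, via the sequence \eqref{2.6}, with the absolute cosection $\sigma\pri$; the assumption that $\sigma$ lifts to $\sigma\pri$ is exactly what makes the two cone-kernel conditions compatible. The properness of $Z(\sigma)$ over $T$ is needed only to make the Borel-Moore pushforwards and the comparison of fibers legitimate, and enters in a routine way once the geometric specialization statement is in hand.
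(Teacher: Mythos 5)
Your overall strategy coincides with the paper's: use the lift $\sigma\pri$ to localize the absolute virtual normal cone $W$ of $M$ via Lemma \ref{ker-cone}, apply the localized Gysin map to get a class in $\BM(Z(\sigma))$ (note the paper observes $Z(\sigma\pri)=Z(\sigma)$, not merely an inclusion, since $\Ob_{M/T}\to\Ob_M$ is surjective), then compare $t^!\bl s^!_{E,\mathrm{loc}}[W]\br$ with $[M_t]\virt\loc$ using the rational equivalence underlying the usual deformation-invariance argument, with properness of $Z(\sigma)\to T$ guaranteeing that the resulting classes live in ordinary homology and push forward consistently. You also correctly isolate the genuine difficulty: one must show that the Kresch rational equivalence $Q$ interpolating between (a pullback of) $W_t\pri$ and (a pullback of) $C_{W\times_T t}W$ has support inside the kernel cone, so that Definition-Proposition \ref{2.5} applies to it.

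However, your proposed resolution of that difficulty --- ``this follows by applying the Corollary after Lemma \ref{j2.1} to the Kuranishi presentation'' --- does not suffice as stated. That corollary constrains the normal cone $C_WV$ of a zero locus of a section; it says nothing directly about the support of the rational equivalence $Q$ connecting the two specialized cones, which is an auxiliary cycle in $\bone\oplus\bone\oplus E_t$ produced by a double-deformation construction, not itself a normal cone of the form covered by Lemma \ref{j2.1}. The paper's actual mechanism is: locally at a point $p\notin Z(\sigma_t)$, split the Kuranishi map as $f=(f_0,f_1)$ according to the kernel/complement decomposition $\cO_{\hat V}\otimes E_p=\cK_0\oplus\cK_1$ induced by the cosection; prove a Sublemma that $(f=0)_{red}=(f_0=0)_{red}$ and that the corresponding normal cones have the same supports; and then invoke the fact that Kresch's construction of the rational equivalence depends only on the supports of the subschemes and cones involved, so that $Q$ has the same support as the rational equivalence $\hat Q\pri$ built from $(f_0=0)$, which manifestly lies in $\hat\bone\oplus\hat\bone\oplus\hat K_0$. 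Without this (or an equivalent) argument, the step where you apply the localized Gysin map ``uniformly in the family'' is unjustified, so the proof as written has a gap precisely at the point you flagged as the main obstacle.
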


\begin{proof}
We shall prove the proposition by showing that applying the
localized Gysin map to the rational equivalence used in deriving the
deformation invariance of the ordinary virtual cycles will provide
us with the homologous relation necessary for the constancy of the
classes $[{M}_t]\virt\loc\in H\lsta(Z(\sigma))$.

Without loss of generality, we can assume that $M$ is a
quasi-projective scheme and $E$ is a vector bundle on $M$ whose
sheaf of sections $\cE=\cO_M(E)$ makes $\Ob_M$ its quotient
sheaf\footnote{The general case can be treated using the technique
developed in \cite{Li2}.}. Then according to the construction of
virtual cycles, the obstruction theory of ${M}$ provides us with a
unique cone cycle $W\in Z\lsta E$ whose intersection with the zero
section of $E$ is the virtual cycle of ${M}$.

For us, we shall use the localized Gysin map to derive a localized
virtual cycle of ${M}$. We let $\tilde\sigma\pri\mh \cE\to\cO_{{M}}$
be the composite of $\cE\to\Ob_{{M}}$ with $\sigma\pri$, and let
$E(\tilde\sigma\pri)$ be the kernel cone of $\tilde\sigma\pri$. Then
Lemma \ref{ker-cone} tells us that $W$ is a cycle in $Z\lsta
E(\tilde\sigma\pri)$. Because $\Ob_{{M}/T}\to\Ob_{{M}}$ is
surjective, $Z(\sigma\pri)=Z(\sigma)$, which by assumption is proper
over $T$. Thus by applying the localized Gysin map we obtain a
homology class
$$s_{E,\mathrm{loc}} ^!([W])\in \BM(Z(\sigma)).
$$

Further, because $T$ is smooth, for each closed point $t\in T$ the
inclusion $t\hookrightarrow T$
defines a Gysin homomorphism
\[t^!:\BM(Z(\sigma))\lra
H\lsta(Z(\sigma_t)).
\]
Here the image lies in the ordinary homology group because
$Z(\sigma)$ is proper over $T$.

By the elementary property of Gysin homomorphism,
if we let $\iota_t\mh Z(\sigma_t)\to Z(\sigma)$ be the inclusion,
then the classes
$$\iota_{t\ast}\left( t^!\bl
s_{E,\mathrm{loc}}^!([W])\br\right) \in H\lsta(Z(\sigma))
$$
are constant in $t$. Therefore, to prove the proposition we only
need to prove that
\begin{equation}\lab{identity}t^!\bl s_{E,\mathrm{loc}}^!([W])\br=
[{M}_t]\virt\loc \in
H^{}\lsta(Z(\sigma_t)).
\end{equation}

Ordinarily, we prove this by applying the Gysin map to a rational
equivalence between the cycle $W$ and the cycle used to define the
virtual cycle $[{M}_t]\virt$. In our case, to get an identity in
ordinary homology classes, we need to make sure that the rational
equivalence lies inside a cone so that Proposition \ref{2.5} can be
applied.

To this end, a quick review of the construction of the rational
equivalence is in order. First, using the homomorphism
$\cO_{{M}_t}\to\Ob_{{M}_t}$ induced by the first arrow in
(\ref{2.6}), and possibly after replacing $\cE\to \Ob_M$ by another
quoient sheaf, we can write $\Ob_{{M}_t}$ as a quotient sheaf of
$\cO_{{M}_t}\oplus\cE_t$ for $\cE_t=\cE\otimes_{\cO_M}\cO_{M_t}$.
Then the composition
$$\cO_{{M}_t}\oplus\cE_t
\mapright{\phi}\Ob_{{M}_t}\mapright{\sigma_t}\cO_{{M}_t}
$$
defines a cosection $\zeta_t$ of $\cO_{{M}_t}\oplus\cE_t$. Because
$\sigma$ lifts to $\sigma\pri$, $\cO_{{M}_t}$ lies in its kernel.
Hence, over ${M}_t-Z(\sigma_t)$ the kernel of $\zeta_t$ is a direct
sum of the trivial line bundle $\bone$ on ${M}_t$ with
$F_t=F|_{{M}_t}$, the restriction of the kernel $F$ of $\sigma\mh
E\to\cO_{M}$. As before, we write $(\bone\oplus E_t)(\zeta_t)$ for
the kernel cone of $\zeta_t$.

With the quotient sheaf $\cO_{{M}_t}\oplus\cE_t\to\Ob_{{M}_t}$, the
obstruction theory of $M_t$ provides us a unique cone cycle
$W_t\pri\sub \bone\oplus E_t$ whose support, according to Lemma
\ref{ker-cone}, lies in $(\bone\oplus E_t)(\zeta_t)$. Because
$Z(\sigma_t)$ is proper, the localized Gysin map then defines the
localized virtual cycle
$$[M
_t]\virt\loc=s_{\bone\oplus E_t,\mathrm{loc}} ^![W_t\pri]\in
H_\ast(Z(\sigma_t)).
$$

The cycle $W_t\pri$, up to a factor $\Ao$, is rationally equivalent
to the normal cone $C_{W\times_Tt}W\sub E_t\oplus \bone$. As argued
in \cite[page 146]{Li-Tian}, the rational equivalence lies in the
total space of the vector bundle $L\defeq\bone\oplus\bone\oplus
E_t$. We let
\begin{equation}\lab{L}
\pi_{23}: L\to \bone\oplus E_t\and \pi_{13}: L\lra \bone\oplus E_t,
\end{equation}
be the projections to the indicated factors of $L=\bone\oplus
\bone\oplus E_t$. We let $B_1=\pi_{23}\sta W_t\pri$; let
$B_2=\pi_{13}\sta C_{W\times_Tt}W$; let
$$\tilde\zeta:
L\mapright{pr_3} E_t\mapright{\tilde\sigma\pri} \cO_{{M}_t}
$$
and let $L(\tilde\zeta)$ be the kernel cone of the homomorphism
$\tilde\zeta$. Clearly, both $B_1$ and $B_2\in Z\lsta
L(\tilde\zeta)$, and
$$s_{L,\mathrm{loc}} ^![B_1]=[{M}_t]\virt\loc\and
s_{L,\mathrm{loc}} ^![B_2]=t^![{M}]\virt\loc
$$
On the other hand, as shown in \cite[page 146]{Li-Tian} (see also
\cite{BF,Kresch}) there is a canonical rational equivalence $Q\in
R\lsta L$ (here $R\lsta L$ is the set of rational equivalence in
$L$) so that $\partial Q=B_1-B_2$. We claim that
\begin{equation}\lab{rat}
\text{Supp} (Q)\sub L(\tilde\zeta).
\end{equation}
Note that once this is proved, then $B_1$ and $B_2$ are rationally
equivalent cycles in $Z\lsta L(\tilde\zeta)$. By Proposition
\ref{2.5}, we will have
$$s_{L,\mathrm{loc}} ^![B_1]=s_{L,\mathrm{loc}} ^![B_2]\in
H\lsta(Z(\sigma)).
$$

We now prove (\ref{rat}). We let $p\in{M}_t-Z(\sigma_t)$ be any
closed point. Following the notation introduced in the proof of
Lemma \ref{ker-cone} we let $f\in\kk[\![z]\!]\otimes E_{p}$ be a
Kuranishi map of ${M}$ at $p$. Then $\hat M=(f=0)\sub \hat V=\spec
\kzz$ and the fiber over $p$ of the cone $W$ is identical to the
closed fiber of the normal cone $C_{\hat M}\hat V$ to $\hat M$ in
$\hat V$.

The fiber $W_t\pri|_p$ has a similar description. We pick an $\tilde
h\in\cO_T$ so that $\tilde h\upmo(0)=\{t\}$ and pick a lifting
$h\in\cO_{\hat V}$ of $\tilde h$. Then as shown in \cite[page
144]{Li-Tian}, the pair $(h,f)\in \kzz\otimes(\CC\oplus E_p)$ is a
Kuranishi map of $M_t$ at $p$. Hence the formal completion $\hat
M_t$ of $M_t$ at $p$ is isomorphic to $(h=f=0)\sub \hat
V_t=\spec\kzz/(h)$. Likewise, the fiber at $p$ of the cone $W_t\pri$
is identical to the closed fiber of the normal cone $C_{\hat
M_t}\hat V_t$.

Because $f$ is a Kuranishi map of $M$ at $p$, as argued in
(\ref{comd2.4}), we have an isomorphism $\coker(df)\cong
\rho\sta\Ob_M$ and the surjective composite homomorphism
\begin{equation}\lab{split}
\cO_{\hat M}\otimes E_p\lra \coker(df)\ \cong\ \rho\sta \Ob_M
\mapright{\rho\sta(\sigma\pri)} \cO_{\hat M}.
\end{equation}
We let $\cK_0\sub \cO_{\hat M}\otimes E_p$ be the kernel (subsheaf)
of (\ref{split}) and let $\cK_1\sub \cO_{\hat M}\otimes E_p$ be its
complementary subsheaf:
$$\cO_{\hat M}\otimes E_p=\cK_0\oplus\cK_1.
$$
Because $\cO_{\hat M}\otimes E_p$ is locally free, we can extend
this decomposition to that over $\hat V$,
\begin{equation}\lab{dec2}
\cO_{\hat V}\otimes E_p=\cK_0\oplus\cK_1
\end{equation}
still labeled as $\cK_0$ and $\cK_1$. Because $\kzz\otimes
E_p=\Gamma(\cO_{\hat V}\otimes E_p)$, $f$ can be viewed as a section
of $\cO_{\hat V}\otimes E_p$, thus decomposed as $(f_0,f_1)$ under
the decomposition (\ref{dec2}).

\begin{subl}
The subscheme $\hat M_{red}\sub \hat V$, which is $\hat M$ endowed
with the reduced scheme structure, is identical to the subscheme
$\hat M\pri_{red}=(f_0=0)_{red}\sub \hat V$; the support of the
normal cone $C_{\hat M}\hat V$ to $\hat M$ in $\hat V$ coincides
with the support of the normal cone $C_{\hat M\pri}\hat V$, both as
subcones in $\hat M\times E_p$. The same conclusion holds with $\hat
M\sub \hat V$ replaced by $\hat M_t\sub\hat V_t$ and $\hat M\pri$
replaced by $\hat M_t\pri=(h=f_0=0)\sub\hat V_t$.
\end{subl}

We shall prove the Sublemma momentarily.

Granting the Sublemma, we now prove the Proposition. To begin with,
we let $\hat K_0$ and $\hat K_1$ be the associated vector bundles on
$\hat M_t$ of the sheaf $\cK_0$ and $\cK_1$, let $\hat\bone$ be the
trivial line bundle on $\hat M_t$, and let $\hat L$ be the direct
sum bundle $\hat\bone\oplus\hat\bone\oplus (\hat K_0\oplus\hat
K_1)$. As before, we let
$$\hat\pi_{13}: \hat L\lra \bone\oplus(\hat
K_0\oplus\hat K_1);\quad \hat\pi_{23}: \hat L\lra \hat\bone\oplus
(\hat K_0\oplus\hat K_1)
$$
be the projections similar to those in (\ref{L}). Then by working
with subschemes $M_t$ and $\hat M\sub \hat V$, we obtain a rational
equivalence $\hat Q$ in $\hat L$ so that
$$\partial \hat Q=\hat\pi_{23}\sta C_{\hat M_t}\hat V_t-\hat\pi_{13}\sta
C_{C_{\hat M}\hat V\times_T t}C_{\hat M}\hat V.
$$
Working with the subschemes $\hat M\pri_t$ and $\hat M\pri\sub \hat
V$ instead, we obtain another cone $\hat Q\pri$ in $\hat L$ so that
$$\partial \hat Q\pri=\hat\pi_{23}\sta C_{\hat M_t\pri}\hat V_t-\hat\pi_{13}\sta
C_{C_{\hat M\pri}\hat V\times_T t}C_{\hat M\pri}\hat V.
$$
However, since $\hat M\pri$ is defined by the vanishing of $f_0$,
which is a section of $\cK_0$ over $\hat V$, the support of the
rational equivalence $\hat Q\pri$ naturally lies in $\hat
\bone\oplus\hat\bone\oplus \hat K_0$. On the other hand, following
the explicit construction of the rational equivalence by Kresch
\cite{Kresch}, the support of the rational equivalence $\hat Q$ only
depends on the supports of the subschemes $\hat M$ and $\hat M_t$,
and on the supports of the cones $C_{\hat M_t}\hat V_t$ and
$C_{C_{\hat M}\hat V\times_T t}C_{\hat M}\hat V$. Therefore, though
$\hat Q$ may be different from $\hat Q\pri$, their supports are
identical. Consequently, the support of $\hat Q$ lies entirely in
$\hat \bone\oplus\hat\bone\oplus \hat K_0$.

Finally, because the fiber over $p$ of the rational equivalence $Q$
is identical to that of $\hat Q$, we conclude that the support of
$Q$ over $M-Z(\sigma_t)$ lies inside the subbundle
$\bone\oplus\bone\oplus F$. This proves the proposition.
\end{proof}

\begin{proof}[Proof of Sublemma]
We first show that $\hat M_{red}=\hat M\pri_{red}$. Suppose not.
Then we can find a map $\rho\mh \kk[\![t]\!]\to\hat M$ that does not
factor through $\hat M\pri\sub\hat M$. Because $\hat M$ and $\hat
M\pri$ are defined by $f_0=f_1=0$ and by $f_0=0$ respectively, we
must have $f_0\circ\rho\equiv 0$ while $f_1\circ\rho\not\equiv 0$.
Hence for some $n$ and $a\ne 0\in \cK_1\otimes k(\bar p)$, $f_1\circ
\rho(t)\equiv at^n\mod t^{n+1}$. Thus by the proof of Lemma
\ref{j2.1}, $a\in C_{\hat M}\hat V$, violating the conclusion of
Lemma \ref{j2.1} that $C_{\hat M}\hat V\sub \hat K_0$. For the same
reason, the proof of Lemma \ref{j2.1} shows that $C_{\hat M}\hat
V\sub \hat K_0$ forces that as sets, $C_{\hat M}\hat V\times_{\hat
M}\bar p=C_{\hat M\pri}\hat V\times_{\hat M}\bar p$.

The same argument works for the case $\hat M_t$ and $\hat
M_t\pri\sub \hat V_t$. This proves the Sublemma.
\end{proof}

\section{Localized GW-invariants}

We continue to let $X$ be a smooth quasi-projective variety endowed
with a holomorphic two-form $\theta$. This form induces a cosection
of the obstruction sheaf $\Ob_{\cM}$ of the moduli space
$\mgn(X,\beta)$. We assume that the degeneracy locus $Z(\sigma)$ of
$\sigma$ is proper. In this case, we can pair the localized virtual
cycle with the tautological classes on $\cM$ to define the localized
GW-invariants of $(X,\theta)$.

In this section, we shall also prove the deformation invariance of
such invariants under some technical condition; we shall derive a
formula of the localized invariants in a special case, sufficient to
recover all GW-invariants of surfaces without descendants, first
proved by Lee-Parker \cite{Lee-Parker}.

\subsection{Cosection of the obstruction sheaf}

Let $X$ be a smooth complex quasi-projective variety endowed with a
nontrivial holomorphic two-form $\theta\in \Gamma(\Omega_X^2)$. The
two-form $\theta$ can be viewed as an anti-symmetric homomorphism
\begin{equation}\lab{4.1}
\hat{\theta}:T_X\lra \Omega_X,\quad (\hat\theta(v),v)=0
\end{equation}
from the tangent bundle to the cotangent bundle. Such a two-form
will induce a cosection of the obstruction sheaf of the moduli space
$\mgn(X,\beta)$. For convenience, in case the data $g$, $n$, $X$ and
$\beta$ are understood implicitly, we shall abbreviate
$\mgn(X,\beta)$ to $\cM$.

Let $S$ be an open subset of $\cM$; let $f\mh \cC\to X$ and $\pi\mh
\cC\to S$ be the universal family over $S$; let
\begin{equation} \lab{yh2.2}
R^1\pi_*f^*T_X\lra R^1\pi_*f^*\Omega_X\lra
R^1\pi_*\Omega_{\cC/S}\lra R^1\pi_*\omega_{\cC/S},
\end{equation}
be the sequence of homomorphisms in which the first arrow is induced
by (\ref{4.1}) and the second is induced by
$f\sta\Omega_X\to\Omega_{\cC/S}$. Because
$R^1\pi_*\omega_{\cC/S}\cong \cO_S$, this sequence provides us with
a canonical homomorphism
\begin{equation}\lab{yh2.7} \sigma_f:R^1\pi_*f^*T_X\lra
 \cO_S.
\end{equation}

\begin{lemm}\lab{lem-yh2.3}
The composition \begeq \lab{yh2.9} \cE
xt^1_\pi(\Omega_{\cC/S},\cO_{\cC}) \lra R^1\pi_*f^*T_X \lra \cO_S,
\endeq
in which the first arrow is induced by $f^*\Omega_X\to
\Omega_{\cC/S}$ and the second arrow by $\sigma_f$, is a trivial
homomorphism.
\end{lemm}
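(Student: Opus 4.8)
The plan is to realise the displayed composite as $H^1R\pi_*$ of a single morphism of complexes on $\cC$ and then to prove that morphism is zero. Write $\phi\mh f^*\Omega_X\to\Omega_{\cC/S}$ for the canonical map and $\bar\phi\mh f^*\Omega_X\to\omega_{\cC/S}$ for its composite with $\Omega_{\cC/S}\to\omega_{\cC/S}$. Since $f^*\Omega_X$ is locally free, $\cE xt^1_\pi(f^*\Omega_X,\cO_\cC)=R^1\pi_*f^*T_X$, the first arrow of the Lemma is $H^1R\pi_*$ of $R\cH om(\phi,\cO_\cC)\mh R\cH om(\Omega_{\cC/S},\cO_\cC)\to f^*T_X$, and $\sigma_f$ is the trace isomorphism $R^1\pi_*\omega_{\cC/S}\cong\cO_S$ precomposed with $R^1\pi_*$ of $\bar\phi\circ f^*\hat\theta\mh f^*T_X\to\omega_{\cC/S}$ (this is just (\ref{yh2.2}) read backwards through $\cE xt$). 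Hence the composite in question equals $\mathrm{trace}\circ H^1R\pi_*(\mu)$, where
\[
\mu\mh\ R\cH om(\Omega_{\cC/S},\cO_\cC)\ \xrightarrow{R\cH om(\phi,\cO_\cC)}\ f^*T_X\ \xrightarrow{f^*\hat\theta}\ f^*\Omega_X\ \xrightarrow{\bar\phi}\ \omega_{\cC/S},
\]
so it suffices to show $\mu=0$ in $D^b(\cC)$.

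Since $\Omega_{\cC/S}$ is perfect and $\omega_{\cC/S}$ is invertible, biduality gives a canonical isomorphism $\Hom_{D(\cC)}(R\cH om(\Omega_{\cC/S},\cO_\cC),\omega_{\cC/S})\cong H^0(\cC,\Omega_{\cC/S}\otimes\omega_{\cC/S})$, under which $\mu$ corresponds, up to sign, to the section obtained by applying $(\phi\circ f^*\hat\theta)\otimes\id$ to the class of $\overline{df}\in\cH om(f^*\Omega_X,\omega_{\cC/S})=f^*T_X\otimes\omega_{\cC/S}$, where $\overline{df}$ is the differential of $f$ followed by $\Omega_{\cC/S}\to\omega_{\cC/S}$. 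So everything reduces to the vanishing of this explicit section $\tilde\mu$. On the open locus $\cC^\circ$ where $\pi$ is smooth, $\Omega_{\cC/S}=\omega_{\cC/S}=:L$ is a line bundle and $\tilde\mu|_{\cC^\circ}$ is, after identifying $\Hom(L\dual,L)$ with $L^{\otimes2}$, the section $\phi\circ f^*\hat\theta\circ\phi\dual$; this equals its own transpose (a $1\times1$ "matrix") and also minus its transpose, because $(\hat\theta(v),v)=0$ forces $(f^*\hat\theta)\dual=-f^*\hat\theta$; being in characteristic $0$ it is therefore $0$. So $\tilde\mu$ vanishes on the dense open $\cC^\circ$.

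It remains to show $\tilde\mu$ also vanishes at each node of each fibre; this last local computation is the main obstacle, since $\Omega_{\cC/S}\otimes\omega_{\cC/S}$ has torsion there, so vanishing on $\cC^\circ$ does not suffice. Étale-locally near a node write $\cC=\spec\cO_S[x,y]/(xy-t)$, so that $\Omega_{\cC/S}$ is presented by $x\,dy+y\,dx=0$ and $\Omega_{\cC/S}\to\omega_{\cC/S}$ sends $dx\mapsto x\nu$, $dy\mapsto-y\nu$ for a generator $\nu$ of $\omega_{\cC/S}$. I would expand $\tilde\mu$ in $dx\otimes\nu$ and $dy\otimes\nu$: of the four resulting terms, two have the shape $\sum_{i,j}f^*\theta_{ij}\,(\cdot)_i(\cdot)_j$ with the second factor symmetric in $i,j$, hence vanish against $\theta_{ij}=-\theta_{ji}$; the remaining two, after using the relation $x\,dy=-y\,dx$ together with $x^2\,dy=y^2\,dx=0$, both collapse to $\pm\,\theta^0(b,c)\,(y\,dx\otimes\nu)$, where $\theta^0$ is the two-form evaluated at the image of the node and $b,c$ are the derivatives of $f$ along the two branches, and these cancel each other by antisymmetry of $\theta^0$. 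Thus $\tilde\mu=0$ everywhere, so $\mu=0$ and the Lemma follows. Conceptually: away from the nodes the statement is the elementary identity $\theta(v,v)=0$, and at a node one must additionally check that the cross–interaction of the two branches weighted by $\theta$ cancels, which is exactly where the node relation $x\,dy+y\,dx=0$ and the antisymmetry of $\theta$ both enter.
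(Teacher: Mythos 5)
Your proposal is correct, and the overall reduction is the same as the paper's: both arguments repackage the composite \eqref{yh2.9} as (the $H^1$ of) a single sheaf-level map $\omega_{\cC/S}^\vee\to\Omega_{\cC/S}$ (a section of $\Omega_{\cC/S}\otimes\omega_{\cC/S}$), and both kill it on the smooth locus of $\pi$ by the rank-one antisymmetry argument $(\Theta(v),v)=0$. Where you genuinely diverge is at the nodes, which is the real content of the lemma. The paper avoids any computation there: it locally embeds $\cC/S$ into a larger family $\tilde\cC/\tilde S$ in which the node is smoothed and $f$ extends, observes that $\Omega_{\tilde\cC/\tilde S}$ is then torsion free, so generic vanishing of $\tilde\Theta$ forces $\tilde\Theta=0$, and restricts back. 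You instead compute directly in the local model $xy=t$. Your computation does close, and in fact more cleanly than you state: writing $v_i=x\partial_xf_i-y\partial_yf_i$ for the image of the generator of $\omega^\vee_{\cC/S}$, the section is $\sum_{i,j}\theta_{ij}v_i\,df_j$; the $x\,dx$ and $y\,dy$ terms die because their coefficients are symmetric in $(i,j)$, and the two cross terms combine into $\bigl(\sum_{i,j}\theta_{ij}\partial_xf_i\partial_yf_j\bigr)\cdot(x\,dy+y\,dx)$, which is exactly the defining relation of $\Omega_{\cC/S}$ and hence zero --- identically in the family, with no evaluation at the node needed. One caveat: the auxiliary relations $x^2\,dy=y^2\,dx=0$ you invoke are \emph{false} in $\Omega_{\cC/S}$ when the node is smoothed in the family (there $x^2\,dy=-t\,dx$); they hold only on the fibers. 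Fortunately they are not needed, as the cancellation above shows, so this is a presentational slip rather than a gap. The trade-off between the two routes: yours is more elementary and self-contained (no need to construct the extension $\tilde\cC/\tilde S$ together with an extension of $f$, which requires a small argument of its own), and it makes visible exactly where the node relation and the antisymmetry of $\theta$ enter; the paper's is coordinate-free and generalizes immediately to any situation where one can deform to a family with torsion-free relative differentials.
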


\begin{proof}
The natural homomorphism $f^*\Omega_X\to \Omega_{\cC/S}$ coupled
with $\hat\theta$ induces a sequence of homomorphisms and their
composite
$$
\Theta:\Omega_{\cC/S}^\vee\lra f^*T_X\mapright{\hat{\theta}}
f^*\Omega_X \lra\Omega_{\cC/S}.
$$
Because $\hat\theta$ is anti-symmetric, this composite is also
anti-symmetric
$$(\Theta(v),v)=0, \quad\forall v\in \Omega_{\cC/S}\dual.
$$
Because $\Omega_{\cC/S}$ has rank one, $\Theta$ is trivial over the
locus where $\Omega_{\cC/S}$ is locally free.

We now prove that the composite
\begin{equation}\lab{4.5}
\bar\Theta: \omega\dual_{\cC/S}\lra \Omega_{\cC/S}
\end{equation}
of the tautological $\omega_{\cC/S}\dual\to\Omega_{\cC/S}\dual$ with
$\Theta$ is trivial. Because $\Theta$ vanishes at general points, we
only need to show that $\bar\Theta$ is trivial at a node, say $q$,
of the a fiber of $\cC/S$. Since the latter is a local problem, we
can assume that $\cC$ is a family of affine curves. Then by
shrinking $S$ if necessary we can realize $\cC/S$ as a subfamily of
$\tilde\cC/\tilde S$ via $S\sub \tilde S$ and
$\cC=\tilde\cC\times_{\tilde S}S$ so that the node $q$ is smoothed
within the family $\tilde\cC$ and the morphism $f\mh \cC\to X$ is
extended to $\tilde f\mh \tilde \cC\to X$.

For the family $\tilde f$, we form the similar homomorphism
\begin{equation}\lab{4.6}
\tilde\Theta: \omega\dual_{\tilde\cC/\tilde S}\lra \Omega_{\tilde
\cC/\tilde S}.
\end{equation}
Following the definition, its restriction to
$\cC/S\sub\tilde\cC/\tilde S$ is the homomorphism $\bar\Theta$ in
(\ref{4.5}). But for $\tilde\Theta$, it must be trivial since it is
trivial at general points and since $\Omega_{\tilde \cC/\tilde S}$
is torsion free. Therefore, (\ref{4.5}) must be trivial as well.

If we apply $\cE xt^1_{\pi}(-, \cO_{\cC})$ to $\bar\Theta$, we
obtain the vanishing of the composition
\[
\cE xt^1_{\pi}(\Omega_{\cC/S}, \cO_{\cC})\lra R^1\pi_*f^*T_X\to
R^1\pi_*\omega_{\cC/S}\cong \cO_S
\]
which is exactly \eqref{yh2.9} by definition.
\end{proof}

Because the cokernel of $\ext^1_\pi(\Omega_{\cC/S},\cO_{\cC}) \to
R^1\pi_*f^*T_X$ is the restriction to $S$ of the obstruction sheaf
$\Ob_\cM$ of $\mgn(X,\beta)$, the preceding lemma gives us a
canonical cosection $\sigma_S\mh \Ob_M|_S\to \cO_S$. Because this
construction is canonical, these $\sigma_S$ for $S\sub \cM$ descend
to form a sheaf homomorphism
\begin{equation}\lab{4.7}
\sigma:\Ob_\cM\lra\cO_\cM.
\end{equation}

This cosection is surjective away from those stable morphisms that
are $\theta$-null.

\begin{defi} A stable map $u\mh C\to X$ is called \emph{$\theta$-null}
if the composite
$$u\sta(\hat\theta)\circ du: T_{C\reg}\lra u\sta T_X|_{C\reg}\lra
 u\sta\Omega_X|_{C\reg}
$$
is trivial over the regular locus $C\reg$ of $C$.
\end{defi}

\begin{lemm}
The degeneracy locus of the cosection $\sigma$ of $\Ob_\cM$ is the
collection of $\theta$-null stable morphisms in $\cM$.
\end{lemm}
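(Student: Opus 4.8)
The plan is to show two inclusions. The degeneracy locus $Z(\sigma)$ is, by definition, the set of points $[u\colon C\to X]$ in $\cM$ where the cosection $\sigma\colon \Ob_\cM\to\cO_\cM$ fails to be surjective, i.e. where $\sigma\otimes k([u])$ is the zero map. Since $\Ob_\cM$ is, locally on a neighbourhood $S$ of $[u]$, the cokernel of $\cE xt^1_\pi(\Omega_{\cC/S},\cO_\cC)\to R^1\pi_*f^*T_X$, and $\sigma_S$ is induced by $\sigma_f\colon R^1\pi_*f^*T_X\to\cO_S$ of \eqref{yh2.7}, surjectivity of $\sigma$ at $[u]$ is equivalent to surjectivity of the fibre map $\sigma_f\otimes k([u])$. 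So the task reduces to: the fibre map $\sigma_f\otimes k([u])$ is zero if and only if $u$ is $\theta$-null. First I would fix a stable map $[u\colon C\to X]$ and work over a small affine $S\ni[u]$ so that all the sheaves in \eqref{yh2.2} are as concrete as possible.

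The key step is a base-change/duality identification of the fibre of $\sigma_f$. By cohomology and base change, the fibre of $R^1\pi_*f^*T_X$ at $[u]$ is $H^1(C,u^*T_X)$, and the composite \eqref{yh2.2} becomes, on fibres, the map
\[
H^1(C,u^*T_X)\xrightarrow{u^*(\hat\theta)} H^1(C,u^*\Omega_X)\longrightarrow H^1(C,\Omega_{C})\longrightarrow H^1(C,\omega_C)\cong\CC,
\]
the last isomorphism being Serre duality / the trace. By Serre duality on the (possibly nodal, arithmetic genus $g$) curve $C$, this functional on $H^1(C,u^*T_X)$ is the dual of a map $H^0(C,\omega_C\otimes \text{(something)})\to H^0(C,u^*\Omega_X\otimes\omega_C)$; more usefully, the composite $u^*T_X\to u^*\Omega_X\to\omega_C$ is exactly the sheaf map $\omega_C^\vee\otimes(\text{dual picture})$ — concretely, dualizing, $\sigma_f\otimes k([u])$ is zero iff the $\cO_C$-linear sheaf homomorphism
\[
\omega_C^\vee \longrightarrow u^*T_X \xrightarrow{\;u^*(\hat\theta)\;} u^*\Omega_X
\]
induces the zero map on $H^1$ paired against $H^0(\omega_C)$, and by torsion-freeness arguments (as in the proof of Lemma \ref{lem-yh2.3}) this in turn is equivalent to the sheaf homomorphism $\omega_C^\vee\to u^*\Omega_X$ itself being zero. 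I would phrase this cleanly: the functional $\sigma_f\otimes k([u])$ is the composition $H^1(C,u^*T_X)\to H^1(C,\omega_C)$ induced by the sheaf map $\beta\colon u^*T_X\to\omega_C$ obtained from $u^*\hat\theta$ followed by $u^*\Omega_X\to\Omega_C\to\omega_C$; and $H^1(-)$ of a sheaf map $u^*T_X\to\omega_C$ lands surjectively onto $H^1(\omega_C)=\CC$ precisely when the map $u^*T_X\to\omega_C$ is nonzero (a nonzero map to the line bundle $\omega_C$ has torsion cokernel, hence is surjective after twisting down, and a surjection onto $\omega_C$ from a sheaf is surjective on $H^1$ since $H^2=0$). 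So $[u]\in Z(\sigma)$ iff $\beta=0$.

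It then remains to check that $\beta=0$ is the same condition as $u$ being $\theta$-null. By Definition, $u$ is $\theta$-null iff $u^*(\hat\theta)\circ du\colon T_{C\reg}\to u^*\Omega_X|_{C\reg}$ is zero. On the smooth locus $\Omega_{C/S}$ restricted to the fibre agrees with $\Omega_{C\reg}=\omega_C|_{C\reg}$, and $du\colon T_{C\reg}\to u^*T_X$ is dual to $u^*\Omega_X\to\Omega_{C\reg}$; so $\beta|_{C\reg}$ is exactly $u^*\Omega_X\to\Omega_{C\reg}$ precomposed with $u^*\hat\theta$, which is the transpose of $u^*\hat\theta\circ du$. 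Thus $\beta|_{C\reg}=0$ iff $u$ is $\theta$-null. Finally, since $\omega_C$ is a line bundle and $C$ is reduced (nodal), a homomorphism $u^*T_X\to\omega_C$ that vanishes on the dense open $C\reg$ vanishes identically — this is the torsion-freeness argument already used at the end of the proof of Lemma \ref{lem-yh2.3}. Hence $\beta=0\iff u$ is $\theta$-null, completing the proof.

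The main obstacle I expect is the clean identification of the fibre of $\sigma_f$ with the $H^1$-map induced by the sheaf homomorphism $\beta$, i.e. carefully tracking the base-change isomorphism $R^1\pi_*\omega_{\cC/S}\cong\cO_S$ together with Serre duality so that the statement ``$\sigma_f$ is surjective at $[u]$'' really does translate to ``$\beta$ is a nonzero sheaf map''. The subtlety is that $\Omega_{\cC/S}$ restricted to the central fibre is $\Omega_C$, which differs from $\omega_C$ at the nodes, and one must make sure the chain of maps in \eqref{yh2.2} behaves well under restriction to the fibre at a point where $C$ is nodal; but Lemma \ref{lem-yh2.3} already packages the node analysis (the composite factoring through $\omega_{\cC/S}$ and the torsion-free extension argument), so this is available to us. Everything else is formal: cohomology and base change for $R^1\pi_*$ of the relevant sheaves (which are supported in the top cohomological degree, so base change is automatic), and the elementary fact that a nonzero sheaf map onto a line bundle on a reduced curve is surjective on $H^1$.
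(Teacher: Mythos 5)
Your overall route is the same as the paper's: identify the fibre of $\sigma$ at a point $[u\colon C\to X]$ with the functional $H^1(C,u^*T_X)\to H^1(C,\omega_C)=\CC$ induced by the sheaf map $\beta\colon u^*T_X\to u^*\Omega_X\to\Omega_C\to\omega_C$, show that this functional vanishes exactly when $\beta=0$, and then use torsion-freeness of $\mathcal{H}om(u^*T_X,\omega_C)$ on the reduced curve $C$ to equate $\beta=0$ with the $\theta$-null condition on $C_{\mathrm{reg}}$. That last step and the base-change bookkeeping are fine.

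There is, however, a gap in the middle step. You justify ``$H^1(\beta)$ is surjective iff $\beta\neq 0$'' by asserting that a nonzero map to the line bundle $\omega_C$ has torsion cokernel. That fails when $C$ is reducible (which of course occurs throughout $\mgn(X,\beta)$): $\beta$ can be nonzero on one irreducible component and identically zero on another, in which case $\operatorname{coker}(\beta)$ contains a line bundle on the second component and your surjectivity argument collapses, even though the conclusion is still true. The clean way to see it --- and the paper's way --- is to dualize: by Serre duality the functional $H^1(\beta)$ is dual to the map $\CC=H^0(C,\cO_C)\to H^0(C,u^*\Omega_X\otimes\omega_C)=\Hom(u^*T_X,\omega_C)$ sending $1$ to $\beta$, and since $C$ is connected and $\cO_C$ is globally generated, this dual map is nonzero (hence injective) precisely when $\beta\neq 0$; therefore the functional is surjective precisely when $\beta\neq 0$, with no hypothesis on the components. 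You already set up this duality earlier in your write-up, so the fix is simply to carry it through rather than take the torsion-cokernel shortcut.
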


\begin{proof}
By definition, $\sigma$ at $[u\mh C\to X]\in\cM$ is the composition
$$
H^1(C,u^*T_X)\mapright{\hat{\theta}} H^1(C,u^*\Omega_X)\lra
H^1(C,\Omega_C)\lra H^1(C,\omega_C)=\CC
$$
whose Serre dual is
$$\CC=H^0(C,\cO_C)\lra H^0(f^*T_X\otimes \omega_C)\lra
H^0(f^*\Omega_X\otimes \omega_C).
$$
Because $\cO_C$ is generated by global sections, the composite of
the above sequence is trivial if and only if the composite
$$
T_{C}\otimes \omega_C|_{C\reg}\lra f^*T_X\otimes \omega_C|_{C\reg}
\lra f^*\Omega_X\otimes \omega_C|_{C\reg}
$$
is trivial. But this is equivalent to $u$ being $\theta$-null. This
proves the lemma.
\end{proof}

We summarize the above as follows.

\begin{prop}
Any holomorphic two-form $\theta\in H^0(\Omega^2_X)$ on a smooth
quasi-projective variety $X$ induces a cosection $\sigma\mh
\Ob_\cM\lra \cO_{\cM}$ of the obstruction sheaf $\Ob_\cM$ of the
moduli space $\cM$. Its degeneracy locus $Z(\sigma)$ is the set of
all $\theta$-null stable maps in $\cM$.
\end{prop}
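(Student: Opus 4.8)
The plan is to assemble the statement from the two preceding lemmas; the only point that needs a word is that the locally defined cosections glue to a single global one. First I would recall that, by the construction of the perfect obstruction theory of $\cM=\mgn(X,\beta)$, over any open $S\sub\cM$ carrying a universal family $f\mh\cC\to X$, $\pi\mh\cC\to S$, the obstruction sheaf $\Ob_\cM|_S$ is canonically the cokernel of the natural homomorphism $\cE xt^1_\pi(\Omega_{\cC/S},\cO_{\cC})\to R^1\pi_*f^*T_X$ induced by $f^*\Omega_X\to\Omega_{\cC/S}$. The two-form $\theta$, viewed as the anti-symmetric $\hat\theta\mh T_X\to\Omega_X$ of \eqref{4.1}, inserted into the canonical sequence \eqref{yh2.2} together with the relative-duality identification $R^1\pi_*\omega_{\cC/S}\cong\cO_S$, produces the homomorphism $\sigma_f\mh R^1\pi_*f^*T_X\to\cO_S$ of \eqref{yh2.7}. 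Lemma \ref{lem-yh2.3} is precisely the assertion that $\sigma_f$ annihilates the image of $\cE xt^1_\pi(\Omega_{\cC/S},\cO_{\cC})$, so $\sigma_f$ descends to a homomorphism $\sigma_S\mh\Ob_\cM|_S\to\cO_S$.

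Next I would check that $\{\sigma_S\}$ is compatible with restriction. Every ingredient of $\sigma_S$ — the morphism $\hat\theta$ pulled back from $X$, the maps in \eqref{yh2.2}, and the canonical isomorphism $R^1\pi_*\omega_{\cC/S}\cong\cO_S$ — is functorial for base change, and $\Ob_\cM$ is a sheaf already defined on all of $\cM$; hence $\sigma_{S'}=\sigma_S|_{S'}$ for $S'\sub S$, and the $\sigma_S$ patch to a global cosection $\sigma\mh\Ob_\cM\to\cO_\cM$, which is \eqref{4.7}. (For the Deligne--Mumford stack $\cM$ one runs the same argument over an \'etale atlas, where a universal family exists, and descends.) Finally, $Z(\sigma)$ is computed fiberwise: a point $[u\mh C\to X]$ lies in $Z(\sigma)$ exactly when $\sigma\otimes k([u])$ is the zero map, so the Serre-dual computation of the preceding lemma — $\sigma$ is dual to $\CC=H^0(\cO_C)\to H^0(u^*T_X\otimes\omega_C)\to H^0(u^*\Omega_X\otimes\omega_C)$, which vanishes if and only if the composite $T_{C\reg}\to u^*T_X|_{C\reg}\to u^*\Omega_X|_{C\reg}$ vanishes — identifies $Z(\sigma)$ with the set of $\theta$-null stable maps.

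There is no genuine obstacle here: the substance lies entirely in Lemma \ref{lem-yh2.3} (the anti-symmetry of $\hat\theta$ forcing the relevant composite through $\omega_{\cC/S}^\vee$ to vanish) and in the preceding lemma (the Serre-duality reformulation of the fibers of $\sigma$). The present Proposition is the packaging of these, and the only mild bookkeeping point — that the locally constructed $\sigma_S$ glue — is automatic once one observes that $\sigma_f$ is built solely from data pulled back along the universal map and from canonical relative-duality isomorphisms.
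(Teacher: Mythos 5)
Your proposal is correct and matches the paper's treatment: the Proposition there is explicitly a summary of Lemma \ref{lem-yh2.3} (which gives the descent of $\sigma_f$ to $\Ob_\cM|_S\to\cO_S$) and the subsequent lemma identifying $Z(\sigma)$ with the $\theta$-null locus via Serre duality, with the gluing of the $\sigma_S$ justified exactly as you do, by the canonicity of the construction.
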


\subsection{Localized GW-invariants}

Using the cosection $\sigma$ constructed, we shall construct the
localized GW-invariants of a pair $(X,\theta)$ of a smooth
quasi-projective variety and a two-form $\theta\in H^0(\Omega_X^2)$.

\begin{defi}
We say $\beta$ is $\theta$-proper if for any $g$, the subset of
$\theta$-null stable morphisms in $\mgn(X,\beta)$ is proper.
\end{defi}

Let $\beta\in H_2(X,\ZZ)$ be a $\theta$-proper. For any pair $g$ and
$n$, we continue to denote by $\cM=\mgn(X,\beta)$; we let $\sigma\mh
\Ob_\cM\to\cO_\cM$ be the cosection constructed in the previous
subsection. Because the degeneracy locus $Z(\sigma)$ is the subset
of $\theta$-null stable morphisms, it is proper because $\beta$ is
$\theta$-proper. Therefore, by the result of the previous section,
we have the localized virtual cycle
$$[\cM]\virt\loc\in H\lsta(Z(\sigma),\QQ).
$$
We now define the localized GW-invariants of the $\theta$-proper
class $\beta$ as follows.

We let
$$\ev: \cM\lra X^n
$$
be the evaluation morphism, let $\gamma_1,\cdots,\gamma_n\in
H\sta(X)$, let $\alpha_1,\cdots,\alpha_n\in \ZZ^{\geq 0}$, and let
$\psi_i$ be the first Chern class of the relative cotangent line
bundle of the domain curves at the $i$-th marked point. Because the
localized virtual cycle $[\cM]\virt\loc$ is an ordinary homology
class, via the tautological $H\lsta(Z(\sigma),\QQ)\to
H\lsta(\cM,\QQ)$, we can view it as a homology class in
$H\lsta(\cM,\QQ)$ \footnote{It may happen that $[\cM]\virt\loc\ne0$
while its image in $H\lsta(\cM)$ is trivial. In this case we notice
that the localized GW-invariant just defined vanishes automatically.
Thus viewing $[\cM]\virt\loc$ as a class is $H\lsta(\cM)$ won't
cause any trouble as long as the numerical GW-invariants are
concerned.}.

We define the localized GW-invariant of $X$ with descendants to be
$$\langle\tau_{\alpha_1}(\gamma_1)\cdots\tau_{\alpha_n}(\gamma_n)
\rangle^{X,\mathrm{loc}}_{g,\beta} =\int_{[\cM]\virt\loc}
\ev\sta(\gamma_1\times\cdots\times\gamma_n)\cdot
\psi_1^{\alpha_1}\cdots\psi_n^{\alpha_n}
$$

In case $X$ is proper, every class is $\theta$-proper.

\begin{prop} If $X$ is proper, the localized GW-invariant coincides
with the ordinary GW-invariant of $X$.
\end{prop}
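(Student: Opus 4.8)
The plan is to deduce this immediately from the Localization Lemma (Lemma \ref{main-lemma}). When $X$ is proper, the moduli stack $\cM=\mgn(X,\beta)$ is a proper Deligne--Mumford stack, so its closed substack $Z(\sigma)$ of $\theta$-null stable maps is proper as well; in particular every class $\beta$ is $\theta$-proper and the localized invariant is defined. Since $\cM$ is proper, Borel--Moore homology agrees with ordinary homology, $\BM(\cM)=H\lsta(\cM,\QQ)$, and the tautological pushforward $H\lsta(Z(\sigma),\QQ)\to H\lsta(\cM,\QQ)$ used in the definition of the localized invariant is exactly the proper pushforward $i\lsta$ appearing in Lemma \ref{main-lemma}.

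First I would record that, by the Localization Lemma, $i\lsta [\cM]\virt\loc=[\cM]\virt$ as classes in $\BM(\cM)=H\lsta(\cM,\QQ)$. Thus the homology class on $\cM$ obtained from $[\cM]\virt\loc$ via the tautological map is literally the ordinary virtual class $[\cM]\virt$. Next I would compare integrals: writing $c=\ev\sta(\gamma_1\times\cdots\times\gamma_n)\cdot\psi_1^{\alpha_1}\cdots\psi_n^{\alpha_n}\in H\sta(\cM)$, the definition gives
$$
\langle\tau_{\alpha_1}(\gamma_1)\cdots\tau_{\alpha_n}(\gamma_n)\rangle^{X,\mathrm{loc}}_{g,\beta}
=\int_{[\cM]\virt\loc} c,
$$
where the right-hand side is, by construction, the pairing of $c$ with the image of $[\cM]\virt\loc$ in $H\lsta(\cM,\QQ)$. (Here the projection formula is used only to see that pairing the class $[\cM]\virt\loc\in H\lsta(Z(\sigma),\QQ)$ against $c|_{Z(\sigma)}$ equals pairing $i\lsta[\cM]\virt\loc$ against $c$, which holds because $c$ is pulled back from $\cM$.) By the previous step this image equals $[\cM]\virt$, so the integral equals $\int_{[\cM]\virt} c$, which is by definition the ordinary descendant GW-invariant of $X$.

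The statement thus requires essentially no new work beyond Lemma \ref{main-lemma}. The only points to verify carefully are the identification of the ``view $[\cM]\virt\loc$ as a class in $H\lsta(\cM)$'' operation in the definition of the localized invariant with the pushforward $i\lsta$ of the Localization Lemma, and the properness of $\cM$, which both guarantees $\BM(\cM)=H\lsta(\cM,\QQ)$ and makes the pairing against the cohomology class $c$ unambiguous. I do not expect any genuine obstacle here.
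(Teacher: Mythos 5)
Your proposal is correct and is exactly the paper's argument: the paper's one-line proof likewise invokes the fact (from the Localization Lemma) that when $\cM$ is proper the pushforward of $[\cM]\virt\loc$ to $H\lsta(\cM)$ is $[\cM]\virt$, and the equality of integrals follows. Your additional remarks on identifying Borel--Moore with ordinary homology and on the projection formula are just the routine details the paper leaves implicit.
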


\begin{proof}
This follows directly from that when $\cM$ is proper, the localized
virtual cycle $[\cM]\virt\loc$ coincides with the virtual cycle
$[\cM]\virt$ as homology classes in $H\lsta(\cM)$.
\end{proof}

This immediately gives the following vanishing results of J. Lee and
T. Parker \cite{Lee-Parker, Lee} using deformation of almost complex
structures.

\begin{theo}\lab{vanLP}
Let $X$ be a smooth projective variety endowed with a holomorphic
two-form $\theta$. The virtual fundamental class of the moduli space
$\mgn(X,\beta)$ vanishes unless the class $\beta$ can be represented
by a $\theta$-null stable morphism.

Let $D$ be the closed subset of $p\in X$ such that the
anti-symmetric homomorphism $\hat{\theta}(p):T_{X,p}\to
T\dual_{X,p}$ induced by $\theta$ is not an isomorphism. Then for
any $\beta\ne 0$ and $\gamma_i\in H^*(X)$, the GW-invariant $\langle
\prod \tau_{\alpha_i}(\gamma_i)\rangle^X_{g,\beta}$ vanishes if one
of the classes $\gamma_i$ is Poincar\'e dual to a cycle disjoint
from $D$.
\end{theo}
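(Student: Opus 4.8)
The plan is to deduce both assertions directly from the machinery already in place: the Localization Lemma (Lemma \ref{main-lemma}), Proposition \ref{constancy} on deformation invariance, and the identification of $Z(\sigma)$ as the locus of $\theta$-null stable maps. First I would handle the first assertion. By the preceding Proposition, $\theta$ induces a cosection $\sigma\mh \Ob_\cM\to\cO_\cM$ whose degeneracy locus $Z(\sigma)$ is exactly the set of $\theta$-null stable maps in $\cM=\mgn(X,\beta)$. Since $X$ is projective, $\cM$ is proper, and a fortiori $Z(\sigma)$ is proper; hence Lemma \ref{main-lemma} applies and produces $[\cM]\virt\loc\in H\lsta^{BM}(Z(\sigma))$ with $i\lsta[\cM]\virt\loc=[\cM]\virt$ in $H\lsta(\cM)$. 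Now if $\beta$ cannot be represented by any $\theta$-null stable map, then $Z(\sigma)=\emptyset$, so $H\lsta^{BM}(Z(\sigma))=0$ and therefore $[\cM]\virt=i\lsta 0=0$. This gives the first claim with essentially no extra work.

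For the second assertion I would argue as follows. Let $D\sub X$ be the degeneracy locus of $\hat\theta$, i.e. the set of $p$ where $\hat\theta(p)\mh T_{X,p}\to\Omega_{X,p}$ fails to be an isomorphism. The key geometric observation is that any $\theta$-null stable map $u\mh C\to X$ has image contained in $D$: over the regular locus $C\reg$, the composite $u\sta(\hat\theta)\circ du\mh T_{C\reg}\to u\sta\Omega_X$ is zero by the definition of $\theta$-null, and at a point $p=u(x)$ where $u$ is an immersion this forces $\hat\theta(p)$ to kill the one-dimensional image of $du_x$; combined with antisymmetry $(\hat\theta(v),v)=0$ one sees $\hat\theta(p)$ is degenerate, so $p\in D$ — and since $D$ is closed and the image of $C\reg$ is dense in the image of $C$, the whole image lies in $D$. (One must also treat non-reduced or multiple-cover behavior, but the statement only concerns the set-theoretic image, so it suffices to check this on the normalization where the map is generically an immersion.) Consequently the evaluation map $\ev\mh \cM\to X^n$ sends $Z(\sigma)$ into $D^n$, hence into $D\times X^{n-1}$ in the $i$-th factor for each $i$.

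Now suppose $\gamma_i\in H^*(X)$ is Poincaré dual to a cycle disjoint from $D$. Then $\ev_i\sta(\gamma_i)$, pulled back to $Z(\sigma)$, vanishes: it is represented by $\ev_i\upmo$ of a cycle avoiding $D$, while $\ev_i(Z(\sigma))\sub D$. Since $[\cM]\virt\loc$ lives in $H\lsta^{BM}(Z(\sigma))$, the cap product $\ev\sta(\gamma_1\times\cdots\times\gamma_n)\cdot\psi_1^{\alpha_1}\cdots\psi_n^{\alpha_n}\cap [\cM]\virt\loc$ is computed entirely on $Z(\sigma)$, where the factor $\ev_i\sta(\gamma_i)$ already restricts to zero; hence the integral defining $\langle\prod\tau_{\alpha_j}(\gamma_j)\rangle^X_{g,\beta}$ vanishes. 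Here I use that the ordinary GW-invariant equals the localized one because $X$ is proper (the Proposition just above), so the integral may be taken against $[\cM]\virt\loc$ rather than $[\cM]\virt$.

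The main obstacle I expect is making the restriction argument $\ev_i\sta(\gamma_i)|_{Z(\sigma)}=0$ rigorous at the level of (Borel–Moore) homology and cohomology classes rather than merely cycles: one wants to say that a cohomology class pulled back from $X$ and then restricted to a subspace whose image misses the Poincaré-dual cycle vanishes, and then that integration of such a restricted class against a Borel–Moore class supported on $Z(\sigma)$ is zero. This is a standard compatibility — choose a representative cycle for the Poincaré dual of $\gamma_i$ transverse to $\ev_i$ and disjoint from $D$, so that $\ev_i\upmo$ of it misses $Z(\sigma)$ entirely, whence capping with $[\cM]\virt\loc$ gives a cycle supported away from $Z(\sigma)$ yet also inside $Z(\sigma)$, i.e. empty — but it deserves a careful sentence. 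Everything else is a direct invocation of the results already proved in Section 2 and the $\theta$-null analysis of Section 3.
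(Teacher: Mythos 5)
Your proposal is correct and follows essentially the same route as the paper: the first claim is the Localization Lemma applied with $Z(\sigma)=\emptyset$, and the second reduces to the observation that a $\theta$-null stable map with $\beta\ne 0$ has image in $D$, so that $\ev_i\sta(\gamma_i)$ restricts to zero on the support of $[\cM]\virt\loc$. The paper states this in two sentences; you merely fill in the (correct) details, the only superfluous step being the appeal to antisymmetry, since $\hat\theta(p)$ killing the nonzero vector $du_x(v)$ already shows it is not an isomorphism.
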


\begin{proof}
Since $\cM=\mgn(X,\beta)$ is proper, the class $[\cM]\virt$
coincides with the localized virtual class $[\cM]\virt\loc$, which
as shown before is a class supported in the set of $\theta$-null
stable maps in $\cM$. If $\beta$ can not be represented by a
$\theta$-null stable morphism, then the set of $\theta$-null stable
morphisms in $\cM$ is empty. Therefore, $[\cM]\virt\loc=0$, proving
the first statement.

For the second statement, just observe that the image of a
$\theta$-null stable map should be contained in $D$ when $\beta\ne
0$.
\end{proof}

\begin{coro}
Let $(X,\theta)$ be a pair of a smooth projective variety of
dimension $2m$ and a holomorphic two-form $\theta$; let $\theta^m$
be its top wedge. Then the degeneracy locus $D$ of $\theta$ is
$D=zero(\theta^m)$. Suppose $\beta\in H_2(X,\ZZ)$ is not in the
image of $H_2(D,\ZZ)$ by the inclusion $D\hookrightarrow X$, then
the virtual fundamental class $[\cM_{g,n}(X,\beta)]\virt$ is zero.
In particular, when $\theta$ is nondegenerate,
$[\cM_{g,n}(X,\beta)]\virt=0$ for $\beta\ne 0$.
\end{coro}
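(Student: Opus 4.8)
The plan is to reduce the statement to Theorem~\ref{vanLP}; the only genuinely new ingredient is the identification $D=zero(\theta^m)$, which is a piece of linear algebra. For this, recall that $D$ is by definition the set of $p\in X$ at which the skew homomorphism $\hat\theta(p)\mh T_{X,p}\to\Omega_{X,p}$ fails to be an isomorphism. Choosing a local frame of $T_X$ near $p$, the homomorphism $\hat\theta$ is represented by a skew-symmetric $2m\times 2m$ matrix $A$, so $\hat\theta(p)$ is an isomorphism iff $\det A(p)\ne 0$, and since $\det A=(\mathrm{Pf}\,A)^2$ for skew-symmetric $A$, iff $\mathrm{Pf}\,A(p)\ne 0$. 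On the other hand $\theta^m\in H^0(\Omega_X^{2m})=H^0(K_X)$, and under the same frame $\theta^m(p)$ is, up to a nonzero universal constant, $\mathrm{Pf}\,A(p)$ times the local generator of $K_X$ (the top wedge of the two-form attached to $A$ is a nonzero multiple of its Pfaffian times the volume form); equivalently $\det\hat\theta$, a section of $K_X^{\otimes 2}$, is a nonzero multiple of $(\theta^m)^{\otimes 2}$. Hence $\hat\theta(p)$ is non-invertible exactly when $\theta^m$ vanishes at $p$, so $D=zero(\theta^m)$.

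Next I would show that a $\theta$-null stable map $u\mh C\to X$ with nonzero class $\beta=u_*[C]$ has image contained in $D$ — this is the observation invoked in the proof of Theorem~\ref{vanLP}. Decompose $C$ into irreducible components; contracted components contribute $0$ to $\beta$, so it suffices to treat a non-contracted component $C'$. On the dense open locus of the regular part of $C'$ where $du\ne 0$, pick a point $p$ and a tangent vector $v$ with $du_p(v)\ne 0$; the $\theta$-null condition gives $\hat\theta(u(p))\bigl(du_p(v)\bigr)=0$ in $\Omega_{X,u(p)}$, so $\hat\theta(u(p))$ has nontrivial kernel and hence is not an isomorphism, i.e. $u(p)\in D$. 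Since $D$ is closed this forces $u(C')\subset D$; therefore $u_*[C']$, and so $\beta$, lies in the image of $H_2(D,\ZZ)\to H_2(X,\ZZ)$.

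Finally, suppose $\beta$ is not in the image of $H_2(D,\ZZ)\to H_2(X,\ZZ)$; since $0$ always lies in that image we have $\beta\ne 0$. By Theorem~\ref{vanLP}, $[\cM_{g,n}(X,\beta)]\virt=0$ unless $\beta$ is represented by a $\theta$-null stable map, which by the previous paragraph would force $\beta$ into the image of $H_2(D,\ZZ)$ — a contradiction. Hence $[\cM_{g,n}(X,\beta)]\virt=0$. When $\theta$ is nondegenerate, $D=zero(\theta^m)=\emptyset$, so the image of $H_2(D,\ZZ)\to H_2(X,\ZZ)$ is trivial and every $\beta\ne 0$ is excluded, giving $[\cM_{g,n}(X,\beta)]\virt=0$. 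I expect the only point requiring genuine care to be the Pfaffian bookkeeping in the first paragraph; everything else is immediate from Theorem~\ref{vanLP} and its proof.
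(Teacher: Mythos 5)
Your proof is correct and follows exactly the route the paper intends: the corollary is stated in the paper as an immediate consequence of Theorem~\ref{vanLP} together with the observation, made in that theorem's proof, that a $\theta$-null stable map with $\beta\ne 0$ has image in $D$. You have merely filled in the routine details (the Pfaffian identification $D=zero(\theta^m)$ and the component-by-component argument), all of which are accurate.
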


\subsection{Deformation invariance of the localized GW-invariants}
\lab{subs3.3}

Like the ordinary GW-invariants, the localized GW-invariants are
expected to remain constant under deformation of complex structures.
In the following, we shall prove this for the circumstances relevant
to our study.

We consider a smooth family $\cX/T$ of quasi-projective varieties
over a connected smooth affine curve $T$; we assume that this family
admits a regular homomorphic two-form $\theta\in
\Gamma(\cX,\Omega^2_{\cX/T})$. We let $\beta\in H_2(\cX,\ZZ)$ be a
(fiber) curve class and let
$$\cM=\mgn(\cX/T,\beta)
$$
be the moduli space of stable morphisms to fibers of $\cX/T$ of
fundamental class $\beta$.

Let $f\mh \cC\to \cX$ and $\pi\mh \cC\to \cM$ be the universal
family of this moduli stack; let $\kappa\in H^1(\cX,\cT_{\cX/T})$ be
the Kodaira-Spencer class of the first order deformation of
$\cX/T$--- it is the extension class of the exact sequence of
sheaves of tangent bundles
\[
0\lra \cT_{\cX/T}\lra \cT_\cX\lra \cO_\cX\lra 0.
\]
As shown in \cite{Li-Tian}, the obstruction sheaf $\Ob_{\cM}$ of
$\cM=\mgn(\cX/T,\beta)$ and the relative obstruction sheaf
$\Ob_{\cM/T}$, which is the sheaf whose restriction to each fiber
\[
\cM_t=\cM\times_T t=\mgn(\cX_t,\beta)
\]
is the obstruction sheaf $\Ob_{\cM_t}$ of $\cM_t$, fit into the
exact diagram:
\begin{equation}\lab{6.12}
\begin{CD}\lab{2.12}
\pi\lsta f\sta\cO_\cX @>{f\sta\kappa}>> R^1\pi\lsta f\sta\cT_{\cX/T}
@>{\text{surj}}>> \mathrm{ker}\bl R^1\pi\lsta f\sta\cT_{\cX}\to
R^1\pi\lsta
f\sta\cO_{\cX}\br \\
@| @V{\text{surj}}VV@V{\text{surj}}VV\\
\cO_{\cM} @>>> \Ob_{\cM/T} @>{\text{surj}}>> \Ob_{\cM}\\
\end{CD}
\end{equation}
Applying the previous construction, we check that the form $\theta$
induces a cosection of $R^1\pi\lsta f\sta \cT_{\cX/T}$ that descends
to a cosection
\[
\sigma: \Ob_{\cM/T}\lra \cO_{\cM}.
\]
The restriction of $\sigma$ to each fiber $\cM_t$ is the previously
constructed cosection $\sigma_t$ of $\Ob_{\cM_t}$. We let
$Z(\sigma)$ be the union of $Z(\sigma_t)\sub\cM_t$ for all $t\in T$.

Suppose $Z(\sigma)$ is proper over $T$, for each $t\in T$ we can
define the localized GW-invariants of $\cX_t$:
$$\langle\tau_{\alpha_1}(\gamma_1)\cdots\tau_{\alpha_n}(\gamma_n)
\rangle^{\cX_t,\mathrm{loc}}_{g,\beta},\quad \gamma_i\in
H\sta(\cX,\ZZ).
$$
The deformation invariance principle states that the above is
independent of $t$.

In this section, we shall prove this principle for the localized
GW-invariants for the circumstances relevant to our study.

According to Proposition \ref{constancy}, the constancy of the
localized GW-invariants follows from the lifting of the homomorphism
$\sigma$ to a homomorphism
$$\sigma' : \Ob_\cM\lra\cO_\cM,
$$
which by the lower exact sequence in (\ref{6.12}) amounts to the
vanishing of the composite
\begin{equation}\lab{comp-3}
\cO_\cM\lra \Ob_{\cM/T}\mapright{\sigma}\cO_\cM.
\end{equation}

We will prove the vanishing of this composite under certain
additional technical conditions.

The first case we will investigate concerns a smooth family of
projective varieties $\cX/T$ over a connected smooth affine curve
$T$.

\begin{lemm}\lab{con} Under the assumption, the composite
$$\cO_{\cM}=\pi\lsta f\sta\cO_{\cX}\mapright{f\sta\kappa}
R^1\pi\lsta f\sta\cT_{\cX/T}\lra\Ob_{\cM/T}\mapright{\sigma}\cO_\cM
$$
is a lift of a section of $\cO_T$ via the pullback
$p\sta\cO_T\to\cO_\cM$ for $p\mh \cM\to T$.
\end{lemm}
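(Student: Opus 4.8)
The plan is to evaluate the composite at a geometric point of $\cM$ and to recognise its value as the pairing of the Kodaira--Spencer class of the family against the fixed curve class $\beta$, which manifestly depends only on the base point. By construction the cosection $\sigma$ of $\Ob_{\cM/T}$ is the descent of the cosection $\sigma_f$ of $R^1\pi\lsta f\sta\cT_{\cX/T}$ --- the relative analogue of \eqref{yh2.7}; hence the composite in the statement equals $\sigma_f\circ(f\sta\kappa)$, where $f\sta\kappa\mh\cO_\cM=\pi\lsta f\sta\cO_\cX\to R^1\pi\lsta f\sta\cT_{\cX/T}$ is the connecting homomorphism of $0\to f\sta\cT_{\cX/T}\to f\sta\cT_\cX\to\cO_\cC\to 0$ evaluated on $1$, and
$$\sigma_f\mh R^1\pi\lsta f\sta\cT_{\cX/T}\mapright{\hat\theta}R^1\pi\lsta f\sta\Omega_{\cX/T}\lra R^1\pi\lsta\Omega_{\cC/\cM}\lra R^1\pi\lsta\omega_{\cC/\cM}\cong\cO_\cM.$$
Since $\pi$ has one-dimensional fibres, $R^1\pi\lsta(-)$ commutes with base change, so at a geometric point $[u\mh C\to\cX_t]$ of $\cM$ the section $(f\sta\kappa)(1)$ takes the value $u\sta\kappa_t\in H^1(C,u\sta\cT_{\cX_t})$, the extension class of the pulled-back sequence $0\to u\sta\cT_{\cX_t}\to u\sta(\cT_\cX|_{\cX_t})\to\cO_C\to 0$, where $\kappa_t\in H^1(\cX_t,\cT_{\cX_t})$ is the Kodaira--Spencer class of $\cX/T$ at $t$ (the extension class of $0\to\cT_{\cX_t}\to\cT_\cX|_{\cX_t}\to\cN_{\cX_t/\cX}\cong\cO_{\cX_t}\to0$). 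Hence the value of our composite at $[u]$ is the image of $u\sta\kappa_t$ under the fibrewise cosection $H^1(C,u\sta\cT_{\cX_t})\to H^1(C,\omega_C)\cong\CC$.

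Next I would move the pullback past $\hat\theta$: since $\hat\theta_t\mh\cT_{\cX_t}\to\Omega_{\cX_t}$ is a morphism of sheaves on $\cX_t$, the image of $u\sta\kappa_t$ in $H^1(C,u\sta\Omega_{\cX_t})$ is $u\sta\lambda_t$, where $\lambda_t\in H^1(\cX_t,\Omega_{\cX_t})$ is the image of $\kappa_t$ under $\hat\theta_t$. Thus the value at $[u]$ equals the image of $u\sta\lambda_t$ under $H^1(C,u\sta\Omega_{\cX_t})\to H^1(C,\Omega_C)\to H^1(C,\omega_C)\cong\CC$. The heart of the proof is then to show that for any stable map $u\mh C\to\cX_t$ the composition
$$H^1(\cX_t,\Omega_{\cX_t})\mapright{u\sta}H^1(C,u\sta\Omega_{\cX_t})\lra H^1(C,\omega_C)\mapright{\mathrm{tr}}\CC$$
is the pairing $\langle\,\cdot\,,u\lsta[C]\rangle$ of the $(1,1)$-class with the one-cycle $u\lsta[C]$; this is the standard compatibility of the trace map of Serre duality with proper pushforward of curves (for smooth $C$ it is the classical identity $\mathrm{tr}_C(u\sta\alpha)=\int_{u\lsta[C]}\alpha$, and the general case follows by normalising $C$). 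As every stable map parametrised by $\cM$ has prescribed class $u\lsta[C]=\beta$, it follows that the value of the composite at $[u]$ is $\langle\lambda_t,\beta\rangle$, depending only on $t$ and not on $[u]\in\cM_t$.

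To conclude, set $s(t):=\langle\lambda_t,\beta\rangle$. Writing $q\mh\cX\to T$ for the family, $\lambda=\hat\theta\lsta\kappa$ is a global section of the locally free sheaf $R^1q\lsta\Omega_{\cX/T}$ on $T$; viewing it inside the relative de Rham cohomology $R^2q\lsta\Omega^\bullet_{\cX/T}\cong(R^2q\lsta\CC)\otimes\cO_T$ and pairing with $\beta$ shows that $s$ is a regular function on $T$, while the pointwise computation above identifies the composite of the lemma with $p\sta s$. The main obstacle is the middle step --- identifying the fibrewise cosection $H^1(C,u\sta\cT_{\cX_t})\to\CC$, precomposed with $u\sta$ from $H^1(\cX_t,\cT_{\cX_t})$, with the cycle-class pairing $\langle\hat\theta_t(-),\beta\rangle$, together with the bookkeeping of the trace map on $\omega_C$ for nodal $C$ --- while everything else is a routine unwinding of \eqref{yh2.2}, \eqref{yh2.7} and \eqref{6.12}.
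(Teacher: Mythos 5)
Your proposal is correct and follows essentially the same route as the paper: reduce to a closed point $[u\mh C\to\cX_t]$ and identify the value of the composite with the topological pairing $\langle\hat\theta_t(\kappa_t),\beta\rangle$, which depends only on $t$. The one place to be explicit is the ``standard compatibility'' step: the identity $\mathrm{tr}_C(u\sta\alpha)=\int_{u_*[C]}\alpha$ for $\alpha\in H^1(\cX_t,\Omega_{\cX_t})$ presupposes that $\alpha$ determines a $d$-closed ($(1,1)$-) de Rham class, which is exactly where the paper invokes the K\"ahler property of the projective fibers and the Hodge decomposition $\theta_t(\kappa_t)=u+\overline\partial v$.
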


\begin{proof}
Let $\alpha\in\Gamma(\cO_\cM)$ be this section. Since $T$ is
reduced, we only need to check that to each closed $t\in T$ the
restriction $\alpha_t=\alpha|_{\cM_t}$ is a constant section of
$\cO_{\cM_t}$.

For this, we contract the Kodaira-Spencer class $\kappa_t\in
H^1(\cX_t,\cT_{\cX_t})$ of the family $\cX/T$ with the holomorphic
two-form $\theta_t\in \Gamma(\cX_t,\Omega_{\cX_t}^2)$ to obtain a
class $\theta_t(\kappa_t)\in H^1(\cX_t,\Omega_{\cX_t})$. Clearly, if
we represent $\kappa_t$ by a $\cT_{\cX_t}$-valued anti-holomorphic
$1$-form, $\theta_t(\kappa_t)$ becomes a $\overline\partial$-closed
$(1,1)$-form, thus representing a Dolbeault cohomology class. Then
since $\cX_t$ is K\"ahler, by Hodge theory there is a $d$-closed
$(1,1)$-form $u$ and a $(1,0)$-form $v$ so that
$\theta_t(\kappa_t)=u+\overline\partial v$. But then, since the
section $\alpha_t\in\Gamma(\cO_{\cM_t})$ is defined via the
push-forward $\pi\lsta f\sta\theta_t(\kappa_t)$, it is equal to
$\pi\lsta f\sta u+\pi\lsta f\sta \overline{\partial}v=\pi\lsta f\sta
u$, because $\pi\lsta f\sta \overline{\partial}v=0$; since $u$ is a
cohomology class in $H^2(\cX_t,\CC)$, $\alpha_t$ is identical to the
constant $\int_\beta u\in\CC$. This proves the lemma.
\end{proof}

\begin{coro}\lab{trivial}
Suppose further that the class $\beta$ can be represented by a
$\theta_t$-null stable map. Then the pairing $\int_\beta u$
vanishes; so does the section $\alpha_t$; and so does the composite
(\ref{comp-3}).
\end{coro}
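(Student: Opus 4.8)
The plan is to combine the previous lemma with a vanishing argument for the relevant cohomological pairing. By Lemma \ref{con}, the composite \eqref{comp-3} factors through $p\sta\cO_T\to\cO_\cM$ and, restricted to each fiber $\cM_t$, equals the constant $\int_\beta u$, where $u$ is the harmonic representative of the Dolbeault class $\theta_t(\kappa_t)\in H^1(\cX_t,\Omega_{\cX_t})\subset H^2(\cX_t,\CC)$. So it suffices to prove that this number vanishes under the additional hypothesis that $\beta$ is represented by some $\theta_t$-null stable map $h\mh C\to\cX_t$.

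The key point is to rewrite $\int_\beta u$ as a pairing over the curve $C$. First I would note that $\int_\beta u = \int_\beta \theta_t(\kappa_t)$ since $u$ and $\theta_t(\kappa_t)$ differ by the $\dbar$-exact term $\dbar v$ (plus, if one works with $d$-closed rather than harmonic $u$, a possible $\pa$-exact correction, which also pairs trivially with the fundamental class $\beta$). Then $\int_\beta\theta_t(\kappa_t)=\int_C h\sta\bl\theta_t(\kappa_t)\br$. Next I would unwind $h\sta\theta_t(\kappa_t)$: contracting $\theta_t$ with $\kappa_t$ and then pulling back along $h$ is the same as pulling back $\hat\theta_t\mh T_{\cX_t}\to\Omega_{\cX_t}$, composing with $dh\mh T_C\to h\sta T_{\cX_t}$ on one side and with the natural $h\sta\Omega_{\cX_t}\to\Omega_C$ on the other, and contracting with (the pullback of) $\kappa_t$. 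Because $h$ is $\theta_t$-null, the composite $h\sta(\hat\theta_t)\circ dh\mh T_{C\reg}\to h\sta\Omega_{\cX_t}|_{C\reg}\to\Omega_C|_{C\reg}$ is trivial over $C\reg$; feeding this into the description of $h\sta\theta_t(\kappa_t)$ shows that the $(1,1)$-form $h\sta\theta_t(\kappa_t)$ vanishes identically on $C\reg$, hence as a current on all of $C$. Therefore $\int_C h\sta\theta_t(\kappa_t)=0$, which gives $\int_\beta u=0$ and, by Lemma \ref{con}, the vanishing of $\alpha_t$ and of the composite \eqref{comp-3}.

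The main obstacle is carrying out the identification in the previous step cleanly in the mixed algebraic/analytic setting: one must match the sheaf-theoretic cosection $\sigma$ (built from $R^1\pi\lsta$ of the maps in \eqref{yh2.2}) with the Dolbeault-theoretic description $\int_\beta\theta_t(\kappa_t)$, and then interpret the $\theta_t$-nullity of $h$—a statement about $h\sta(\hat\theta_t)\circ dh$—as the pointwise vanishing of the pulled-back $(1,1)$-form. Since $C$ may be nodal, one should either restrict attention to $C\reg$ and note that the complement is a proper subset of real codimension $2$ (so contributes nothing to the integral), or normalize and pull back further; either way the nullity hypothesis is exactly what makes the integrand vanish where it is defined. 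Once that identification is in place, the conclusion is immediate.
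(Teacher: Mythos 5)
Your argument is correct and is exactly the one the paper leaves implicit (the corollary is stated without proof): by Lemma \ref{con} the section $\alpha_t$ equals the topological pairing $\int_\beta u=\int_C h\sta\theta_t(\kappa_t)$ for any stable map $h\mh C\to\cX_t$ representing $\beta$ --- the $\dbar$-exact discrepancy $\dbar(h\sta v)$ integrating to zero over the compact curve by Stokes --- and evaluating on a $\theta_t$-null representative makes the integrand vanish pointwise on $C\reg$, hence the integral vanish. The one point your write-up slightly blurs, and the only place the argument could fail, is which vanishing is being invoked: what kills the contraction $h\sta(\iota_{\kappa_t}\theta_t)$ is the triviality of $h\sta(\hat\theta_t)\circ dh$ as a map into the \emph{full} pullback bundle $h\sta\Omega_{\cX_t}$ (which is precisely the paper's definition of $\theta_t$-null), because the antiholomorphic slot of the contraction is filled by $\kappa_t$ evaluated on $dh(\bar\eta)$, yielding a vector of $T_{\cX_t}$ that need not lie in the image of $dh$; the weaker triviality of the further composite into $\Omega_C$, which is how your sentence is phrased, would not suffice.
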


The second case is the deformation to the normal cone of a smooth
canonical curve in a surface $X$. Let $(X,\theta)$ be a pair of a
smooth surface and a holomorphic two-form with smooth degeneracy
locus $D=\theta\upmo(0)$. By blowing up $X\times \Ao$ along $D\times
0$ we obtain a family of proper surfaces $\pi\mh Z\to\Ao$. We then
let $\tilde Z\sub Z$ be the complement of the proper transform of
$X\times 0$ in $Z$. The restricted family $\tilde Z\to\Ao$ is the
union of $X\times(\Ao-0)$ with the total space of the normal bundle
$N_{D/X}$. In this case, since the sheaf of the tangent bundle
$\cT_Z$ restricted to the exceptional divisor $E\sub Z$ is
isomorphic to $\cT_E\oplus \cO_E(-1)$, the Kodaira-Spencer class
$\kappa_0$ of the family $\tilde Z/\Ao$ along the central fiber is
trivial. Thus the conclusion of Corollary \ref{trivial} holds for
this family. We remark that by adjunction formula, $N_{D/X}^{\otimes
2}\cong K_D$. Thus $\tilde Z_0$ is the total space of a theta
characteristic of $D$.

The last case we shall consider concerns the total space of theta
characteristics of smooth curves. Recall that a theta characteristic
of a smooth curve $D$ is a line bundle $L$ so that $L^{\otimes
2}\cong K_D$. Give such a pair $(D,L)$, the total space $X$ of the
line bundle $L$ is a surface whose canonical line bundle $K_X$ is
the pullback of $L$ under the tautological projection $\pi\mh X\to
D$. Thus the assignment that sends any $x\in X$ to the same $x\in
L_{\pi(x)}$ defines a canonical section $\theta\in\Gamma(X,K_X)$;
the vanishing locus of $\theta$ is exactly the zero section of
$\pi$. We call this two-form the \emph{standard holomorphic
two-form} on $X$.

In the following, we let $D_t$ be a smooth family of curves and
$L_t$ be a family of theta characteristics of $D_t$. The total
spaces $\cX_t$ of $L_t$ form a smooth family $\cX/T$ with $\theta\in
\Gamma(\cX,\Omega_{\cX/T}^2)$ the standard relative holomorphic
two-form. We let $\beta\in H_2(\cX,\ZZ)$ be the class generated by
the zero section of one of $\cX_t$. As before, we denote
$\mgn(\cX/T,d\beta)$ by $\cM$.

\begin{lemm} The conclusion of Lemma \ref{con} holds for
the family $\cX$ and the form $\theta$.
\end{lemm}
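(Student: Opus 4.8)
The plan is to follow the same strategy as in the proof of Lemma~\ref{con}: reduce to checking, fiberwise over $t\in T$, that the section $\alpha_t\in\Gamma(\cO_{\cM_t})$ obtained by contracting the Kodaira--Spencer class $\kappa_t$ of the family $\cX/T$ with the relative two-form $\theta_t$ is constant, and then compute that constant directly. Since $T$ is reduced it suffices to treat each closed point, so fix $t\in T$ and write $X=\cX_t$, $D=D_t$, $L=L_t$ with $\pi\mh X\to D$ the projection; recall $K_X\cong\pi\sta L$ and $\theta_t\in\Gamma(X,K_X)$ is the standard two-form, vanishing exactly along the zero section $D_0\sub X$. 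The class $\beta$ is $d$ times the zero section, which is manifestly represented by a $\theta_t$-null stable map (a $d$-fold cover of $D_0$, on which $df$ has image in $T_{D_0}\sub TX|_{D_0}$, and $\theta_t$ kills this subbundle). So the situation is exactly that of Corollary~\ref{trivial}, and it is enough to know that the section $\alpha_t$ is constant in order to conclude it is $0$, hence that the composite \eqref{comp-3} vanishes.

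The one point that needs genuine argument is the constancy of $\alpha_t$, because $X$ is only quasi-projective rather than projective, so the Hodge-theoretic argument in Lemma~\ref{con} (which used that $\cX_t$ is Kähler and compact to write $\theta_t(\kappa_t)=u+\overline\partial v$ with $u$ a $d$-closed cohomology class) is not directly available. First I would compute $\theta_t(\kappa_t)\in H^1(X,\Omega_X)$ more explicitly. The family $\cX/T$ is a family of total spaces of theta characteristics; along the zero section $D_0$, the restriction of the relative tangent complex splits as $\cT_{D}\oplus L$ (the horizontal and vertical directions), and $\theta_t$ pairs the vertical direction $L$ against the conormal direction. I expect that the Kodaira--Spencer class $\kappa_t$, contracted with $\theta_t$, lands in the image of the pullback map $H^1(D,\Omega_D)\to H^1(X,\Omega_X)$ (pulled back along $\pi$), because the deformation of $(D,L)$ along $T$ is really a deformation of the pair on the curve. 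Concretely: the total-space deformation is induced from the deformation of $D$ together with the compatible deformation of the line bundle $L$, and contracting with the fiberwise two-form produces a class that is $\pi^*$ of a class on $D$.

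Given that, the final step is routine: any stable map $u\mh C\to X$ with $u_*[C]=\beta=d[D_0]$ has image contained in $D_0$ (since $\beta$ is a multiple of the zero section and $X$ retracts onto $D_0$; more precisely $\pi\circ u\mh C\to D$ has degree $d$ and $u$ factors through $D_0$ up to the $L$-direction, whose pushforward to $H_2$ is zero), so $f^*$ of the cohomology class $\pi^*\omega$ on $X$ only sees $\omega$ restricted to $D$. Then $\alpha_t$, being defined by $\pi_*f^*$ of $\theta_t(\kappa_t)=\pi^*\omega$ with $\omega\in H^2(D,\CC)=\CC$ (as $D$ is a curve and the class is of type $(1,1)$, or just because $H^2(D)$ is one-dimensional), is the constant $d\int_{[D]}\omega$. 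Hence $\alpha_t$ is constant, Corollary~\ref{trivial} applies since $\beta$ is $\theta_t$-null, and \eqref{comp-3} vanishes, which is the content of Lemma~\ref{con} for this family. The main obstacle is the middle paragraph: making precise that $\theta_t(\kappa_t)$ is pulled back from a cohomology class on the curve $D$ (equivalently, that the relevant deformation-theoretic pairing factors through $H^\bullet(D)$), which replaces the compactness/Kähler input used in the projective case.
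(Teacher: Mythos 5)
Your overall architecture is the right one (and matches the paper's): reduce to showing the section $\alpha_t$ obtained by contracting the Kodaira--Spencer class with $\theta_t$ is locally constant on $\cM_t$, and then kill it using the fact that the cosection degenerates on maps into the zero section. But the step you flag as ``the main obstacle'' is not an obstacle to be deferred --- it \emph{is} the content of the lemma, and your proposed substitute for it does not work as stated. You assert that $\theta_t(\kappa_t)$ lies in the image of $\pi^*\mh H^1(D,\Omega_D)\to H^1(X,\Omega_X)$. The paper computes the \v{C}ech representative explicitly from the transition functions of the family of theta characteristics ($z=f(w,t)$, $\xi=(f_w)^{1/2}\eta$) and finds $\theta_0(\kappa_0)=-\xi\dot f\,d\xi+\tfrac12\xi^2\dot f_w f_w^{-1}dz$, which depends essentially on the fiber coordinate $\xi$ and vanishes on the zero section; it is not a pullback from $D$, and since $X$ is non-compact one cannot freely replace a Dolbeault representative by a cohomologous one and still compute the coherent pairing $H^1(C,h^*\Omega_X)\to H^1(C,\omega_C)=\CC$ topologically. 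What the paper actually proves is that this representative is $d$-closed ($\partial\theta_0(\kappa_0)=0$ by a cancellation using $\dot f_z=-\dot f_w f_w^{-1}$), which makes the period $\int_{h^{-1}(S^1)}\theta_0(\kappa_0)$ over a separating circle topological, hence locally constant. Establishing your pullback claim rigorously would require essentially this same computation, so the gap is genuine rather than cosmetic.

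Two further points. First, your claim that every stable map of class $d[D_0]$ has image contained in the zero section is false: stable maps to $X$ of that class correspond to stable maps $u\mh C\to D$ of degree $d$ together with a section of $u^*L$ (this is exactly Lemma \ref{yh5.5} and the description of $\cM$ as the total space of $\pi_*f^*L$ over $\cN$), and such a section need not vanish. This error happens not to be load-bearing for your computation of the period, but it hides the final step the paper does need: after local constancy one must connect an arbitrary stable map to a $\theta$-null one \emph{within a connected open set}, which the paper does by dilation along the fibers of $L$. Second, your appeal to Corollary \ref{trivial} is circular in form (it is a corollary of Lemma \ref{con}, whose hypotheses do not hold here); what you actually need is only its one-line argument --- a locally constant section vanishing on the nonempty degeneracy locus vanishes --- which is fine, but should be said directly.
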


\begin{proof}
To prove the lemma, we first check that for the Kodaira-Spencer
class $\kappa_0\in H^1({\cX_0},T_{\cX_0})$ of the family $\cX/T$ and
$(f,\cC)$ the universal family of $\cM_0=\mgn({\cX_0},d\beta)$, we
have that the composite
\begin{equation}\lab{comp}
\cO_{\cM_0}=\pi\lsta f\sta\cO_{\cX_0}\lra R^1\pi\lsta
f\sta\cT_{\cX_0}\lra \Ob_{\cM_0}\mapright{\sigma}\cO_{\cM_0}
\end{equation}
is locally constant.

We first describe the Kodaira-Spencer class $\kappa_0$, which
depends on the family $D_t$. For simplicity, we shall work with
analytic charts of $D_0$. We pick an analytic open $U\sub D_0$ so
that $U$ is isomorphic to the unit disk $\Delta\sub\CC$. We then let
$V=D_0-A$ with $A\sub U$ a compact subset so that $U-A$ is
isomorphic to an annulus. The two open sets $U$ and $V$ form an open
covering of $D_0$. Since $H^1(T_U)=H^1(T_V)=0$, $H^0(T_{U\cap V})\to
H^1(T_{D_0})$ is surjective. Hence, for small $t$ the family $D_t$
can be realized by an analytic deformation of the gluing map
$$U\supset U\cap V\mapright{\cong} U\cap V\sub V.
$$
In concrete terms, if we let $z$ and $w$ be the analytic coordinates
of $U$ and $V$ near $U\cap V$, and let $z=f(w,0)$ be the identity
map of $U\cap V$ in coordinate variables $z$ and $w$, then $D_t$ can
be realized by gluing $U$ and $V$ via $z=f(w,t)$ with $f(w,t)$ an
analytic deformation of $f(w,0)$.

To proceed, we need the transition function of $\cX_t$. Because
$D_t=U\cup V$, the surface $\cX_t$ is the union of the total space
of $K_U^{\frac{1}{2}}$ and $K_V^{\frac{1}{2}}$. As to the transition
function of $\cX_t$, we let $\xi=(dz)^{\frac{1}{2}}$ and
$\eta=(dw)^{\frac{1}{2}}$ be bases of $K_U^{\frac{1}{2}}$ and
$K_V^{\frac{1}{2}}$ over ${U\cap V}$. Then by adopting the
convention that $f_w=\frac{\partial f}{\partial w}$ and $\dot
f=\frac{df}{dt}$, the two pairs of local charts $(z,\xi)$ and
$(w,\eta)$ are related by
$$z=f(w,t)\and \xi=(f_w)^{\frac{1}{2}}\eta.
$$
Accordingly, the Kodaira-Spencer class of the first order
deformation of $\cX_t$ at $t=0$ can be represented by \v Cech
1-cocycle
$$\kappa_0(U\cap V)=\bl\frac{d}{dt}(f)\cdot\frac{\partial}{\partial z}+
\frac{d}{dt}((f_w)^{\frac{1}{2}}\eta)\cdot \frac{\partial}{\partial
\xi}\br|_{t=0}= \bl \dot f\cdot\frac{\partial}{\partial
z}+\frac{\xi\,\dot f_w}{2f_w}\cdot \frac{\partial}{\partial
\xi}\br|_{t=0}.
$$
Because over $K_U^{\frac{1}{2}}\cap K_V^{\frac{1}{2}}$, the standard
holomorphic two-form is $\theta_0=\xi\, d\xi\wedge dz$, the
contraction is
$$\theta_0(\kappa_0)=-\xi\dot f \,d\xi+\frac{1}{2}\xi^2{\dot
f_w}{f_w\upmo} dz.
$$
Therefore, using $\dot f_z=\dot f_w\frac{\partial w}{\partial
z}=-\dot f_w f_w\upmo$,
$$\partial(\theta(\kappa_0))=-\xi\dot f_z dz\wedge d\xi+\xi\dot f_w
f_w\upmo d \xi\wedge dz=0.
$$
Combined with the fact that $\bar\partial \theta(\kappa_0)=0$, we
see that the form $\theta_0(\kappa_0)$ is $d$-closed.

The lemma now follows easily. We let $p\mh {\cX_0}\to D_0$ be the
projection and let $\cN\sub \cM_0$ be the (analytic) open subset
consisting of those $h\mh C\to {\cX_0}$ so that $p\circ h\mh C\to
D_0$ are unramified over $U\cap V$. We then pick an oriented
embedded circle $S^1\sub U\cap V$ that separates the two boundary
components of $U\cap V$. An easy argument shows that the
homomorphism (\ref{comp}) is the function, up to sign,
$$(f,C)\in \cM_0\longmapsto \int_{f\upmo(S^1)}\theta_0(\kappa_0)\in \CC.
$$
Because $\theta_0(\kappa_0)$ is $d$-closed, this integral only
depends on the topological class of $f\upmo(S^1)$, hence must be
locally constant over $\cN$. But then this constant must be zero
since it vanishes on those $h$ so that $h(C)\sub D_0\sub \cX_0$, and
since by dilation along fibers of $L_0$ each $h\mh C\to {\cX_0}$ can
be deformed to a stable map from $C$ to $D_0\sub {\cX_0}$ within
$\cN$. This shows that (\ref{comp}) is zero over $\cN$.

To complete the proof, we observe that for each stable map in
$\cM_0$, we can choose $U\sub D_0$ so that this stable map lies in
the $\cN$ associated to $U$. Therefore (\ref{comp}) must be zero on
all $\cM_0$, completing the proof of the lemma.
\end{proof}

\subsection{Reduction for surfaces}

Let $S$ be a smooth {general type minimal surface} equipped with a
holomorphic two-form $\theta$ whose vanishing locus is a canonical
curve $B\in |K_S|$. In case $B$ is smooth, {$B$ is connected} and by
the adjunction formula, $K_B=K_S^{\otimes 2}|_B$, the genus of $B$
is $g(B)=K_S^2+1$ and the normal bundle $N_{B/S}\cong K_S|_B$ is a
theta characteristic of $B$, namely $N_{B/S}^{\otimes 2}\cong K_B$,
of parity (\cite{Lee-Parker, MaulikPand})
$$h^0(B,N_{B/S})\equiv \chi(\cO_S)\mod 2.
$$

Conversely, for a smooth projective curve $D$ of genus $K_S^2+1$ and
a theta characteristic $L$ of $D$ of parity $\chi(\cO_S) \mod 2$, we
take $p\mh X\to D$ to be the total space of the line bundle $L$ and
let $\theta\in H^0(X,\Omega_X^2)$ be the standard two-form on $X$.
Since $\theta\upmo(0)=D$ is proper, the localized GW-invariants of
$X$ are well-defined.

{The GW-invariants of $S$ are expected to be determined by the
numerical date $K_S^2$ and $\chi(\cO_S)$.}

\begin{conj}\lab{reduct}
Let $S$ be a smooth {minimal general type surface with positive
$p_g>0$}. Its GW-invariants $\langle \cdots \rangle _{\beta,g}^S$
vanish unless $\beta$ is a non-negative integral multiple of
$c_1(K_S)$. In case $\beta=d c_1(K_S)$ for an integer $d> 0$, we let
$(D,L)$ be a pair of a smooth projective curve of genus $K_S^2+1$
and its theta characteristic with parity $\chi(\cO_S)$, and let $X$
be the total space of $L$. Then there is a canonical homomorphism
$\rho\mh H^{\ast}(S,\ZZ)\to H^{\ast}(X,\ZZ)$ so that for any classes
$\gamma_i\in H^{\ast}(S,\ZZ)$ and integers $\alpha_i\geq 0$,
$i=1,\cdots,n$,
\[
\langle\tau_{\alpha_1}(\gamma_1)\cdots\tau_{\alpha_n}(\gamma_n)\rangle^S_{\beta,g}
=\langle\tau_{\alpha_1}(\rho(\gamma_1))\cdots
\tau_{\alpha_n}(\rho(\gamma_n))\rangle^{X,\mathrm{loc}}_{d[D],g}.
\]
\end{conj}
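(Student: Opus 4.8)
The plan is to split the statement into its vanishing half and its identification half, and to prove the identification --- when the canonical two-form $\theta\in H^0(\Omega^2_S)$ has smooth zero locus $B\in|K_S|$ --- by composing two deformation-invariance statements for the localized virtual cycle (Proposition~\ref{constancy}). The vanishing of $\langle\cdots\rangle^S_{\beta,g}$ for $\beta$ not a non-negative multiple of $c_1(K_S)$ follows from Theorem~\ref{vanLP}: a $\theta$-null stable map has image in the degeneracy curve $B$, so any curve class $\beta$ with $[\cM_{g,n}(S,\beta)]\virt\neq 0$ is effective and supported on $B$, and when $B$ is smooth and connected (hence irreducible) its only such classes are the $d\,c_1(K_S)$ with $d\geq 0$. (For general $S$ one must also control effective classes carried by reducible or non-reduced canonical divisors; I return to this below.) Fix $d>0$.

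For the identification I would first degenerate $S$ to the normal cone of $B\subset S$, as in \S\ref{subs3.3}: blowing up $S\times\Ao$ along $B\times 0$ and removing the proper transform of $S\times 0$ gives a smooth family $\tilde Z\to\Ao$ with general fibre $S$ and central fibre the total space $X_0$ of $N_{B/S}$. By adjunction $N_{B/S}^{\otimes 2}\cong K_B$ and $h^0(N_{B/S})\equiv\chi(\cO_S)\bmod 2$, so $(B,N_{B/S})$ is a pair of a smooth curve of genus $K_S^2+1$ with a theta characteristic of the prescribed parity. The form $\theta$, vanishing along $B$, induces a relative two-form on $\tilde Z/\Ao$ whose restriction to the central fibre is the standard two-form of \S\ref{subs3.3} on $X_0$, hence a cosection $\sigma$ of $\Ob_{\cM/\Ao}$ for $\cM=\cM_{g,n}(\tilde Z/\Ao,d[B])$; its degeneracy locus $Z(\sigma)$ is the locus of stable maps into the canonical curve on each fibre --- stable maps into a curve of bounded genus and degree --- hence proper over $\Ao$. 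Since the Kodaira--Spencer class of $\tilde Z/\Ao$ along the central fibre vanishes (the normal-cone deformation), the hypothesis of Proposition~\ref{constancy} is met: $\sigma$ lifts to $\Ob_\cM$, by Lemma~\ref{con} and Corollary~\ref{trivial} on the nearby fibres and by $\kappa_0=0$ on the central one, so the localized virtual cycles $[\cM_t]\virt\loc\in H\lsta(Z(\sigma))$ are constant in $t$. Then I would run a second such family: since the moduli of curves carrying a theta characteristic of fixed parity is connected, $(B,N_{B/S})$ is joined to the pair $(D,L)$ of the conjecture by a smooth family $\cX/T$ of total spaces of theta characteristics, to which the last lemma of \S\ref{subs3.3} applies (the analogue of Lemma~\ref{con} for such families, again with $Z(\sigma)$ proper over $T$), giving constancy of the localized virtual cycle along this family too.

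To read off the numerical identity, note that $[\cM_S]\virt\loc$ --- which equals $[\cM_S]\virt$ since $S$ is projective --- is supported on $Z(\sigma)$, i.e.\ on stable maps into $B$, so the evaluation morphism restricted to $Z(\sigma)$ factors through $B^n\hookrightarrow S^n$. Hence for $\gamma_i\in H\sta(S)$ the descendant integral $\int_{[\cM_S]\virt}\ev\sta(\gamma_1\times\cdots\times\gamma_n)\,\psi_1^{\alpha_1}\cdots\psi_n^{\alpha_n}$ equals the integral over $Z(\sigma)$ of the pullback from $B^n$ of $i\sta\gamma_1\times\cdots\times i\sta\gamma_n$ against the same $\psi$-classes, where $i\mh B\hookrightarrow S$. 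Defining $\rho:=\pi\sta\circ i\sta\mh H\sta(S)\to H\sta(B)\xrightarrow{\ \sim\ }H\sta(X)$, with $\pi\mh X\to B$ the bundle projection (a homotopy equivalence), the same manipulation turns the localized descendant invariant of $X$ into an integral over $Z(\sigma)$ of a pullback from $B^n$ of the corresponding classes. Since the $\psi$-classes come from the moduli over the deformation base and the insertions are transported by the (topologically trivial) families of curves, these two numbers are the two specializations of a single class constant along both deformations; this proves Conjecture~\ref{reduct} when $S$ has a smooth canonical divisor.

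The main obstacle is the general case, where $|K_S|$ has base points or no reduced member, so that $B$ is singular or non-reduced. Then the normal cone of $B$ in $S$ is not the total space of a theta characteristic of a smooth curve, and the relation between $(S,B)$ and the pair $(D,L)$ is only numerical. One must instead deform the analytic germ of $B\subset S$ to a smoothing and check, throughout such a germ deformation, that the cosection of the relative obstruction sheaf still lifts (the analogue of Lemma~\ref{con}) and that the degeneracy locus remains proper over the deformation germ, so that Proposition~\ref{constancy} continues to apply; one must also, in that setting, establish the precise vanishing statement by controlling effective classes supported on reducible or non-reduced canonical divisors. This is the additional analysis of \cite{KL3}, where the conjecture is proved for $S$ with reduced canonical divisor.
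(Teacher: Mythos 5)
Your proposal follows the paper's own route: the statement is only a conjecture there, and the partial proof the paper offers (for $S$ with a smooth canonical divisor) is exactly your two-step deformation --- first to the normal cone of $B\in|K_S|$, where the Kodaira--Spencer class vanishes so Proposition~\ref{constancy} applies via Corollary~\ref{trivial}, then through the connected family of theta characteristics of fixed parity handled by the last lemma of \S\ref{subs3.3}. Like the paper, you correctly isolate the genuine remaining gap --- singular or non-reduced canonical divisors, requiring the germ-deformation analysis deferred to \cite{KL3} --- so the proposal matches the paper's treatment in both substance and scope.
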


In case $S$ has a smooth canonical divisor $D\in |K_S|$, this is
proved by Lee-Parker using symplectic geometry in \cite{Lee-Parker}.
In \S \ref{subs3.3} above, we proved this algebraically by showing
invariance of the localized virtual fundamental class under
deformation of $S$ to the normal bundle $X$ of $D$. More generally,
by constructing a deformation of complex structures of an analytic
neighborhood $U$ of $D\in |K_S|$ in $S$, we can confirm this
conjecture for a wider class of surfaces. We shall address this in
our forthcoming paper \cite{KL3}.

In light of this {conjecture}, in the remainder of this paper, we
shall concentrate on studying the localized GW-invariants of a
surface $p\mh X\to D$ that is the total space of a theta
characteristic $L$ of a smooth projective curve $D$ together with
its standard holomorphic two-form $\theta$ on $X$.

\subsection{Relation with the twisted invariants}

The localized GW-invariant
$\langle\cdot\rangle^{X,\mathrm{loc}}_{d[D],g}$ is expected to
relate to the twisted GW-invariants of the curve $D$.

To begin with, the projection $p\mh X\to D$ induces a morphism from
the moduli of stable morphisms to $X$ to the moduli of stable
morphisms to $D$
$$\bar p: \mgn(X,d)\lra \mgn(D,d),\quad d>0.
$$
Here since $H_2(X,\ZZ)$ is generated by the zero section $D\sub X$,
we will use integer $d$ to stand for the class $d[D]$. At individual
map level, it is clear that in case $h\mh C\to X$ is a stable map,
then $p\circ h\mh C\to D$ is a stable map and the original $h$ is
determined by a global section $H^0(C,(p\circ h)\sta L)$. Thus
fibers of $\bar p$ are vector spaces.

We next put this into family. For convenience, in the following, we
shall fix $g,n$ and $d$ and abbreviate $\mgn(X,d)$ and $\mgn(D,d)$
to $\cMM$ and $\cNN$, respectively. We let
\[\tilde f: \tilde\cC\lra X \and \tilde\pi:
\tilde \cC\lra \cMM
\]
be the universal family of $\cMM$ and let
\begin{equation}\lab{yh5.6} f:
\cC\lra D \and \pi:  \cC\lra \cNN
\end{equation}
be the universal family of $\cNN$. Because $X\to D$ has affine
fibers, the composite $p\circ \tilde f: \tilde\cC\to D$ is a family
of stable morphisms as well. Therefore,
$\tilde\cC=\cC\times_{\cNN}\cMM$ and $p\circ \tilde f$ is the
pullback of $f$.

We next pick two vector bundles $E_1$ and $E_2$ on $\cNN$ and a
sheaf homomorphism
$$\alpha:\cO_{\cNN}(E_1)\lra \cO_{\cNN}(E_2)
$$
whose cohomology gives us the complex $\pi_!f\sta L$. By viewing
$q\mh E_1\to\cNN$ as the total space of $E_1$ with $q$ the
projection, we can form the pullback bundle $q\sta E_2$ on $E_1$ and
the associated section $\bar\alpha\in \Gamma(E_1, q\sta E_2)$.

\begin{lemm} \lab{yh5.5}
The vanishing locus $\bar\alpha\upmo(0)$ is the moduli stack $\cMM$.
\end{lemm}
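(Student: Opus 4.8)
The plan is to prove the lemma by comparing functors of points over $\cNN$ and invoking Yoneda. Both $\cMM=\mgn(X,d)$ and the zero locus $\bar\alpha\upmo(0)\subset E_1$ carry a natural morphism to $\cNN=\mgn(D,d)$ (the former via $\bar p$, the latter via $q$), and both are Deligne--Mumford stacks, so it suffices to exhibit an isomorphism of the groupoid-valued functors $T\rightsquigarrow\Hom(T,\cMM)$ and $T\rightsquigarrow\Hom(T,\bar\alpha\upmo(0))$ which is natural in $T$.

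First I would record the geometric translation of $T$-points of $\cMM$. Since $p\mh X\to D$ is the total space of $L$, its fibres are affine lines, which contain no non-constant complete curves. Hence for a family $g\mh C_T\to X$ of prestable maps over $T$, composition with $p$ yields a family $p\circ g\mh C_T\to D$, and on every geometric fibre a component on which $p\circ g$ is constant is also constant for $g$. Consequently $g$ is a family of stable maps of class $d[D]$ if and only if $p\circ g$ is a family of stable maps of degree $d$ (the classes match because $p_*\mh H_2(X,\ZZ)\to H_2(D,\ZZ)$ is an isomorphism). Moreover, because $X=\mathrm{Tot}(L)$, giving $g$ over a prescribed family of stable maps $f_T\mh C_T\to D$ is precisely the datum of a section $s\in H^0(C_T,f_T^*L)=H^0(T,\pi_{T*}f_T^*L)$, where $(C_T,f_T,\pi_T)$ denotes the pullback of the universal family $(\cC,f,\pi)$ of \eqref{yh5.6} along the morphism $T\to\cNN$ corresponding to $p\circ g$. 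In other words, a $T$-point of $\cMM$ is the same as a pair consisting of a morphism $T\to\cNN$ together with $s\in H^0(T,\pi_{T*}f_T^*L)$.

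Next I would unwind $T$-points of $\bar\alpha\upmo(0)$. Since $q\mh E_1\to\cNN$ is the total space of the bundle $E_1$, a morphism $T\to E_1$ is a morphism $g\mh T\to\cNN$ together with a section $e\in H^0(T,g^*E_1)$, and it factors through the zero scheme of the tautological section $\bar\alpha\in\Gamma(E_1,q^*E_2)$ exactly when $\alpha_T(e)=0$ in $H^0(T,g^*E_2)$, i.e.\ when $e$ is a global section of $\ker(\alpha_T\mh g^*E_1\to g^*E_2)$. Here I would invoke the defining property of the two-term complex $[\cO_{\cNN}(E_1)\xrightarrow{\alpha}\cO_{\cNN}(E_2)]$, namely that it represents $\pi_!f^*L$ and its formation commutes with base change; in particular $\ker(\alpha_T)=\pi_{T*}f_T^*L$ for every $T\to\cNN$. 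Thus $\Hom(T,\bar\alpha\upmo(0))$ is canonically the set of pairs consisting of a morphism $T\to\cNN$ together with $s\in H^0(T,\pi_{T*}f_T^*L)$ --- exactly the description obtained for $\cMM$ above. These identifications are plainly functorial in $T$, and compatible with automorphisms: an automorphism of a pair $(f_T,s)$ on the $\bar\alpha\upmo(0)$ side is the same as an automorphism of the corresponding $X$-map on the $\cMM$ side, being the stabilizer of $s$ inside $\mathrm{Aut}(f_T)$ in both pictures. By Yoneda, $\bar\alpha\upmo(0)\cong\cMM$ over $\cNN$.

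I expect the only genuinely delicate point to be the base-change identity $\ker(\alpha_T)=\pi_{T*}f_T^*L$, uniformly over all test schemes $T$: this is what allows one to pass between the $E_1$-side description ``a section $e$ annihilated by $\alpha_T$'' and the moduli-side description ``a section of $f_T^*L$ along the fibres'', and it is in particular what pins down the (possibly non-reduced) scheme structure of $\bar\alpha\upmo(0)$. It reduces to the standard fact that, $f^*L$ being flat over $\cNN$ with fibrewise cohomology concentrated in degrees $0$ and $1$, the object $R\pi_*f^*L$ is representable by a length-one complex of vector bundles whose formation commutes with arbitrary base change; granting this, the rest of the argument is formal.
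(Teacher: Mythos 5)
Your proof is correct, and it is precisely the ``straightforward'' argument the paper omits: identify $T$-points of both sides with pairs consisting of a map $T\to\cNN$ and a section of $\pi_{T\ast}f_T^*L$, using on one side that $X=\mathrm{Tot}(L)$ has affine fibres (so lifting a stable map to $D$ is giving a section of $f_T^*L$, and stability is preserved), and on the other the base-change compatibility of the chosen complex $[E_1\to E_2]$ representing $\pi_!f^*L$. You correctly isolate the one non-formal input, namely that $\ker(\alpha_T)=\pi_{T\ast}f_T^*L$ for every $T$, which holds for the standard Mumford-type choice of $E_1\to E_2$ implicit in the paper's setup; nothing further is needed.
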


\begin{proof}
The proof is straightforward and will be omitted.
\end{proof}

It was hoped that the localized GW-invariants of $X$ can be
recovered by the twisted GW-invariants of $D$. As will be shown
later, this unfortunately fails in general. However, in case the
sheaf $R^1\pi\lsta f\sta L$ is locally free, this is true up to
sign.

In the remainder of this section, we assume that $R^1\pi\lsta f\sta
L$ is locally free. Accordingly, we can take
$E_1=\pi\lsta f\sta L$, $E_2=R^{1}\pi\lsta f\sta L$ and $\alpha=0$.
Thus $\cMM=\bar\alpha\upmo(0)$ is the total space of $E_1$ and
$q=\bar p$.

In the following, we shall investigate the cosection $\sigma$ more
closely.

\begin{lemm}
Let the notation be as above and suppose $d>0$. Then there are two
canonical homomorphisms $\nu$ and $\bar\theta\dual$ as shown
\begin{equation}\lab{yh5.3-1}
\Ob_{\cMM}\mapright{\nu} R^1\tilde{\pi}_*(\tilde{f}^*p\sta
L^\vee\otimes
\omega_{\tilde{\cC}/\cMM})\mapright{\bar\theta\dual}\cO_{\cMM}
\end{equation}
so that their composite is the associated cosection $\sigma$ of
$\Ob_{\cMM}$. Further, $\nu$ is surjective and the middle term above
is isomorphic to $\bar p\sta E_1\dual$.
\end{lemm}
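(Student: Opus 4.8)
The plan is to write $\nu$ and $\bar\theta\dual$ down explicitly. Recall from \eqref{yh2.2} that $\sigma$ is the homomorphism induced on $R^1\tilde\pi_*$ by
\[
c\mh\tf\sta T_X\mapright{\tf\sta\hat\theta}\tf\sta\Omega_X\lra\Omega_{\tilcC/\cMM}\lra\omega_{\tilcC/\cMM}.
\]
Write $g=p\circ\tf\mh\tilcC\to D$; since $X\to D$ is affine, $g$ is the pullback of $f$ under $\bar p$, so flat base change gives $R^0\tilde\pi_*g\sta L=\bar p\sta(\pi_*f\sta L)=\bar p\sta E_1$, and by hypothesis $R^1\tilde\pi_*g\sta L=\bar p\sta E_2$ is locally free, hence so is $\bar p\sta E_1$. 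Relative Serre duality for the proper, flat, Gorenstein morphism $\tilde\pi$ then yields the last assertion of the lemma,
\[
R^1\tilde\pi_*\bigl(\tf\sta p\sta L\dual\otimes\omega_{\tilcC/\cMM}\bigr)=R^1\tilde\pi_*\bigl(g\sta L\dual\otimes\omega_{\tilcC/\cMM}\bigr)\cong\bigl(R^0\tilde\pi_*g\sta L\bigr)\dual=\bar p\sta E_1\dual.
\]
For $\bar\theta\dual$: under the identification of $\cMM$ with the total space of $E_1$ (Lemma \ref{yh5.5}) the universal morphism $\tf$ is recorded by the universal section $s\in\Gamma(\tilcC,g\sta L)$ (a morphism $\tilcC\to X$ over $g$ is the same as a section of $g\sta L$), and $s=\tf\sta\theta$ since $\theta$ is the tautological section of $K_X=p\sta L$; its pushforward is the tautological section $\bar\theta\in\Gamma(\cMM,\bar p\sta E_1)$, and $\bar\theta\dual\mh\bar p\sta E_1\dual\to\cO_\cMM$ is its dual. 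Under the isomorphism above, $\bar\theta\dual$ is exactly the map induced on $R^1\tilde\pi_*$ by multiplication by $s$, i.e. by $g\sta L\dual\otimes\omega_{\tilcC/\cMM}\to\omega_{\tilcC/\cMM}$, because that is Serre-dual to $\cO_\cMM\to\bar p\sta E_1$, $1\mapsto\bar\theta$.

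To build $\nu$ I use that on the surface $X$ the contraction with $\theta$ factors through the tautological isomorphism $\kappa$ of the rank-two bundle $T_X$ with $\Omega_X\otimes\wedge^2T_X$:
\[
T_X\mapright{\kappa}\Omega_X\otimes\wedge^2T_X\mapright{\otimes\theta}\Omega_X,
\]
$\theta$ being read as a map $\wedge^2T_X\to\cO_X$; moreover $\wedge^2T_X=p\sta T_D\otimes T_{X/D}$ is canonically $\Omega_{X/D}$ by the theta-characteristic isomorphism $L^{\otimes2}\cong K_D$. Pulling $\kappa$ back along $\tf$ and using $\tf\sta\Omega_{X/D}=g\sta L\dual$, set
\[
\nu'\mh\tf\sta T_X\mapright{\tf\sta\kappa}\tf\sta\Omega_X\otimes g\sta L\dual\lra\Omega_{\tilcC/\cMM}\otimes g\sta L\dual\lra g\sta L\dual\otimes\omega_{\tilcC/\cMM},
\]
the second arrow tensoring $\tf\sta\Omega_X\to\Omega_{\tilcC/\cMM}$ with the identity. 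Since $\tf\sta(\otimes\theta)=\otimes s$ and this commutes with the remaining arrows, one gets $c=(\otimes s)\circ\nu'$.

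It remains to descend $R^1\tilde\pi_*\nu'$ to $\Ob_\cMM$; then $\sigma=\bar\theta\dual\circ\nu$ is immediate from $c=(\otimes s)\circ\nu'$. Here I would reprise the argument of Lemma \ref{lem-yh2.3}: let $j\mh\Omega_{\tilcC/\cMM}\dual\to\tf\sta T_X$ be the dual of $\tf\sta\Omega_X\to\Omega_{\tilcC/\cMM}$ and consider $\Phi=\nu'\circ j\mh\Omega_{\tilcC/\cMM}\dual\to g\sta L\dual\otimes\omega_{\tilcC/\cMM}$. Away from the nodes $\Omega_{\tilcC/\cMM}$ is a line bundle and, using $\wedge^2\tf\sta T_X=g\sta L\dual$, the map $\Phi$ becomes the $g\sta L\dual$-valued bilinear form on $T_{\tilcC/\cMM}$ sending $(w,w')$ to $j(w)\wedge j(w')$; being alternating, it vanishes, since a line bundle carries no nonzero alternating $2$-form. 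Smoothing the nodes inside an auxiliary family (as in Lemma \ref{lem-yh2.3}) extends the vanishing across the nodes, so $\Phi=0$; applying $\ext^1_{\tilde\pi}(-,\cO_{\tilcC})$ then shows that
\[
\ext^1_{\tilde\pi}(\Omega_{\tilcC/\cMM},\cO_{\tilcC})\lra R^1\tilde\pi_*\tf\sta T_X\mapright{R^1\tilde\pi_*\nu'}R^1\tilde\pi_*\bigl(g\sta L\dual\otimes\omega_{\tilcC/\cMM}\bigr)
\]
is zero, so $R^1\tilde\pi_*\nu'$ factors through $\Ob_\cMM$, defining $\nu$, and $\nu$ is surjective iff $R^1\tilde\pi_*\nu'$ is.

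Finally I would check surjectivity by restricting $\nu'$ to $g\sta L=\tf\sta T_{X/D}\subset\tf\sta T_X$: unwinding the definitions, $\nu'|_{g\sta L}$ is, up to the canonical isomorphism $g\sta L\cong g\sta\Omega_D\otimes g\sta L\dual$, the arrow $\bigl(g\sta\Omega_D\to\omega_{\tilcC/\cMM}\bigr)\otimes\id$, so by relative Serre duality $R^1\tilde\pi_*$ of it is dual to the map $\bar p\sta E_1=R^0\tilde\pi_*g\sta L\to R^0\tilde\pi_*\bigl(g\sta L\dual\otimes\omega_{\tilcC/\cMM}\bigr)$ obtained by tensoring with the canonical section $\overline{\mathrm{d}g}\in\Gamma\bigl(g\sta T_D\otimes\omega_{\tilcC/\cMM}\bigr)$. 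Since $d>0$, $g$ is non-constant on every non-contracted component of each fibre curve, while $\tf$—hence $g$—is constant on the contracted components, where $g\sta L$ is trivial; a connectedness argument then shows that no nonzero global section of $g\sta L$ is annihilated by $\overline{\mathrm{d}g}$, so this map on $R^0\tilde\pi_*$ is injective. Dually $R^1\tilde\pi_*(\nu'|_{g\sta L})\mh\bar p\sta E_2\lra\bar p\sta E_1\dual$ is surjective, and since it factors through $R^1\tilde\pi_*\nu'$, hence through $\nu$, the map $\nu$ is surjective. The step I expect to be the main obstacle is the descent: establishing that $R^1\tilde\pi_*\nu'$ genuinely kills the contribution of $\ext^1_{\tilde\pi}(\Omega_{\tilcC/\cMM},\cO_{\tilcC})$—that is, pushing the alternating/rank-one vanishing of $\Phi$ carefully through the nodes of the fibres, the same delicate point already handled for $\sigma$ in Lemma \ref{lem-yh2.3}.
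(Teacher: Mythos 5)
Your proposal is correct and follows essentially the same route as the paper: $\nu$ is induced by contracting $\tilde f^*T_X$ against $\theta$ into $g^*L^\vee\otimes\omega_{\tilde\cC/\cMM}$ (your $\kappa$ followed by $\otimes\,\theta$ is literally the paper's map $\tilde f^*(T_X\otimes p^*L)\to\tilde f^*\Omega_X\to\omega$), the descent to $\Ob_\cMM$ is the anti-symmetry/rank-one argument of Lemma \ref{lem-yh2.3}, surjectivity is checked by restricting to the vertical subbundle $\tilde f^*T_{X/D}=g^*L$ and Serre-dualizing to an injectivity statement in $H^0$, and the identification with $\bar p^*E_1^\vee$ is relative Serre duality. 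The only differences are cosmetic repackagings of the same maps.
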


\begin{proof}
Because the two-form $\theta$ on $X$ is a section of
$p^*L=\Omega^2_X$, it provides a section and its dual:
\begin{equation}\lab{yh5.1}
\bar \theta \in H^0\bl\cMM, \tilde{\pi}_*\tilde f\sta p\sta L\br
\and \bar\theta^\vee \in \Hom\bl R^1\tilde{\pi}_*(\tilde f\sta p\sta
L^\vee\otimes \omega_{\tilde{\cC}/\cMM}), \cO_{\cMM}\br.
\end{equation}
Note that although $\tilde{\pi}_*\tilde f\sta p\sta L=\bar p\sta
{\pi}_* f\sta L$, 
$\bar\theta$ is not a pullback from a section over $\cNN$ since it
is injective on $\cMM-\cNN$ while vanishing on $\cNN$. Since $L\cong
L^\vee\otimes K_D$, we further have a morphism
\[
\tilde f\sta p\sta L\cong \tilde f\sta p\sta L^\vee\otimes \tilde
f\sta p\sta K_D \lra \tilde f\sta p\sta L^\vee \otimes
\omega_{\tilde\cC/\cMM}
\]
that induces a homomorphism of vector bundles \begeq\lab{yh5.2}
R^1\tilde{\pi}_*\tilde f\sta p\sta L\lra R^1\tilde{\pi}_*(\tilde
f\sta p\sta L^\vee\otimes \omega_{\tilde\cC/\cMM});
\endeq
this homomorphism is surjective because $H^0_C(\tilde f\sta p\sta
L)\to H^0_C(\tilde f\sta p\sta L^\vee\otimes \omega_C)$, which is
the Serre dual of $H^1_C(\tilde f\sta p\sta L)\to H^1_C(\tilde f\sta
p\sta L^\vee\otimes \omega_C)$, is injective for any stable map
$C\to X$ of degree $d>0$.
%
%

Obviously, this homomorphism is the pullback of the corresponding
\emph{surjective} homomorphism \begeq\lab{yh5.21} E_2=R^1{\pi}_*
f\sta L\lra R^1{\pi}_*(f\sta L^\vee\otimes \omega_{\cC/\cNN})
=E_1\dual
\endeq
over $\cNN$ via $\bar p$. We let $V$ be the kernel vector bundle of
this homomorphism.

We claim that the cosection $\sigma :Ob_{\cMM}\lra \cO_{\cMM}$
defined in \eqref{4.7} factors through the homomorphism
$\bar\theta\dual$ in \eqref{yh5.1}. Indeed, from the natural
morphisms
\[
\tilde{f} ^*(T_X\otimes p^*L)=\tilde{f}^*(T_X\otimes \Omega^2_X)\lra
\tilde{f}^*\Omega_X\lra \Omega_{\tilde{\cC}/\cMM}\lra
\omega_{\tilde\cC/\cMM}
\]
we obtain a homomorphism $\tilde{f}^*T_X\to \tilde f\sta p\sta
L^\vee\otimes \omega_{\tilde\cC/\cMM}$ and thus a homomorphism
\begin{equation}\lab{12}
R^1\tilde{\pi}_*\tilde{f}^*T_X\lra R^1\tilde{\pi}_*(\tilde f\sta
p\sta L^\vee\otimes \omega_{\tilde\cC/\cMM}).
\end{equation}
Similar to the proof of Lemma \ref{lem-yh2.3}, its composition with
the natural homomorphism
\[
\cE xt^1_{\tilde{\pi}}(\Omega_{\tilde{\cC}/\cMM},
\cO_{\tilde{\cC}})\lra R^1\tilde{\pi}_*\tilde{f}^*T_X
\]
is zero. Thus \eqref{12} lifts to a homomorphism
\begin{equation}\lab{yh5.3}
\nu: Ob_{\cMM}\lra R^1\tilde{\pi}_*(\tilde f\sta p\sta L^\vee\otimes
\omega_{\tilde\cC/\cMM}),
\end{equation}
which by construction satisfies $\bar\theta^\vee\circ \nu=\sigma$,
as desired.


Finally, because $R^1\tilde{\pi}_*  \tilde f\sta p\sta L\to
\Ob_{\cMM}$ composed with \eqref{yh5.3} is \eqref{yh5.2} that is
surjective, the arrow $\nu$ is surjective as well. The last
isomorphism in the statement of the lemma follows from the Serre
duality.
\end{proof}

As to the arrow $\bar\theta\dual$, since $R^1\tilde\pi\lsta\bl\tilde
f\sta p\sta L\dual\otimes\omega_{\tilde\cC/\cMM}\br$ is dual to
$\tilde\pi\lsta \tilde f\sta p\sta L=\bar p\sta\pi\lsta f\sta L=\bar
p\sta E_1$ and since $\cMM$ is the total space of $E_1$, an obvious
choice of a cosection like $\bar\theta\dual$ is via the standard
pairing $E_1\dual\times E_1\to\CC$. A direct check shows that this
is the case.

We now state and prove the main result of this section that relates
the localized GW-invariants to the twisted GW-invariants of $D$.

\begin{prop}\lab{fundtwist}
Let the notation be as before and suppose $R^1\pi_*f^*L$ is a
locally free sheaf on $\cNN$. Suppose $d>0$. Then the localized
virtual cycle satisfies
\[
[\cMM]\loc\virt=\sum_{\cNN_i\sub\cNN}(-1)^{h^0(f^*L)}
[\cNN_i]{\virt}\cap e(V),
\]
where $V$ is the kernel vector bundle of \eqref{yh5.21}; the
summation is over all connected components $\cNN_i$ of $\cNN$. Here
$h^0(f^*L)$ denotes the rank of $\pi_*f^*L$ over each component and
$e(V)$ is the Euler class of $V$.
\end{prop}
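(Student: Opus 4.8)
The plan is to exploit the total-space structure. Since $R^1\pi_*f^*L$ is locally free and we have taken $E_1=\pi_*f^*L$, $E_2=R^1\pi_*f^*L$, $\alpha=0$, the stack $\cMM$ is the total space $\mathrm{Tot}(E_1)$ over $\cNN$, $\bar p$ is the bundle projection, its zero section is the degeneracy locus $Z(\sigma)=\cNN$, and $\bar\theta$ (in the notation of the lemma preceding the Proposition) is the tautological section of $\bar p^*E_1$ on $\cMM$, vanishing transversally exactly along $\cNN$. I will (i) identify the virtual normal cone of $\cMM$ with the pullback of that of $\cNN$; (ii) read off the cosection datum on the ambient bundle in terms of $\bar\theta$; (iii) evaluate the localized Gysin map of Definition-Proposition \ref{2.5} along a splitting of the ambient bundle.

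For (i): choosing a vector bundle $G$ on $\cNN$ with $\cO_{\cNN}(G)\twoheadrightarrow\Ob_{\cNN}$, one arranges (after enlarging $G$ if necessary) an ambient bundle $E=\bar p^*(G\oplus E_2)$ on $\cMM$ with $\cO_{\cMM}(E)\twoheadrightarrow\Ob_{\cMM}$, compatibly with the surjection $\Ob_{\cMM}\to\bar p^*\Ob_{\cNN}$ coming from $T_X\to p^*T_D$. Then, in Kuranishi charts: because $\cMM=\mathrm{Tot}(E_1)$ over $\cNN$, a Kuranishi map of $\cMM$ at a point is of the form $(g_{\cNN},0)$ valued in $\bar p^*(G\oplus E_2)$, with $g_{\cNN}$ a pullback of a Kuranishi map of $\cNN$ — the $E_1$-directions are honestly the fibre directions of $\mathrm{Tot}(E_1)$ and $E_2$ is pure excess obstruction (this is what taking $\alpha=0$ in Lemma \ref{yh5.5} encodes). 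Hence the virtual normal cone of $\cMM$ is $W=\bar p^*W_{\cNN}$, lying inside the subbundle $\bar p^*G\subset E$, where $W_{\cNN}$ is the virtual normal cone of $\cNN$ and $[\cNN]\virt=s^!_G([W_{\cNN}])$.

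For (ii)--(iii): by the lemma preceding the Proposition, $\sigma=\bar\theta\dual\circ\nu$ with $\nu\mh\Ob_{\cMM}\to\bar p^*E_1\dual$ surjective; on the image of $\bar p^*E_2$ in $\Ob_{\cMM}$, $\nu$ is the pullback of the surjection \eqref{yh5.21}, whose kernel is $\bar p^*V$, while the part of $\nu$ coming from $\bar p^*\Ob_{\cNN}$ is induced by the $p^*L\dual$-component of $\hat\theta|_{p^*T_D}$, which vanishes along the zero section, hence along $Z(\sigma)=\cNN$. Pick a $C^\infty$-splitting $E_2=V\oplus E_1\dual$ of \eqref{yh5.21}, so $E=\bar p^*G\oplus\bar p^*V\oplus\bar p^*E_1\dual$; a smooth section $\check\sigma$ of $E$ lying purely along $\bar p^*E_1\dual$ with $\bar\theta\dual(\check\sigma)=1$ on $\cMM-\cNN$ then has $\tilde\sigma\circ\check\sigma=1$ there (the part of $\nu$ on $\bar p^*\Ob_{\cNN}$ never enters, as $\check\sigma$ avoids the $\bar p^*G$-summand), and since $W\subset\bar p^*G$ lies in the kernel cone $E(\tilde\sigma)$ by Lemma \ref{ker-cone}, Definition-Proposition \ref{2.5} computes $[\cMM]\loc\virt=s^!_{E,\mathrm{loc}}([W])$ with this $\check\sigma$. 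Factoring $s^!_{E,\mathrm{loc}}=s^!_{\bar p^*G}\circ s^!_{\bar p^*V}\circ s^!_{\bar p^*E_1\dual,\mathrm{loc}}$ and evaluating on $[W]=[\bar p^*W_{\cNN}]$: the $\bar p^*E_1\dual$-step intersects with the perturbed section built from $\check\sigma\sim\bar\theta\upmo$, whose transverse zeros near $\cNN$ carry the class $(-1)^{\mathrm{rk}\,E_1}[\cNN]$ — complex conjugation on a fibre of $E_1$ has degree $(-1)^{\mathrm{rk}\,E_1}$ — so it returns $(-1)^{\mathrm{rk}\,E_1}[W_{\cNN}]$ supported over $\cNN$ (still with vanishing $\bar p^*V$-component); the $\bar p^*V$-step caps with $e(V)$; and the $\bar p^*G$-step is the ordinary Gysin map, giving $e(V)\cap s^!_G([W_{\cNN}])=e(V)\cap[\cNN]\virt$ by the projection formula. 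Restricting to a component $\cNN_i$, where $\mathrm{rk}\,E_1=h^0(f^*L)$, yields the formula.

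The step I expect to be the main obstacle is (iii): justifying, within the perturbation framework of Definition-Proposition \ref{2.5}, that the localized cycle may be computed with a lift $\check\sigma$ taken entirely along $\bar p^*E_1\dual$ — so the part of $\nu$ on $\bar p^*\Ob_{\cNN}$, only known to vanish on $\cNN$ itself, contributes nothing — together with the sign bookkeeping that produces $(-1)^{h^0(f^*L)}$ from the ``$\bar\theta\upmo$''-section. The identification $W=\bar p^*W_{\cNN}$ in (i) requires a compatibility of the two perfect obstruction theories under $\bar p$ and a careful choice of the ambient bundle, but that is routine via Kuranishi maps.
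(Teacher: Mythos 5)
Your argument is essentially the paper's: both proofs use the total‑space structure $\cMM=\mathrm{Tot}(E_1)$ to place the virtual normal cone inside the kernel $W$ of the surjection $\tilde F\to\bar p\sta E_1\dual$ induced by $\nu$, evaluate the localized Gysin map in the $\bar p\sta E_1\dual$-direction against the tautological pairing to get $(-1)^{\rank E_1}[\cNN]$, identify the restriction of the cone to $\cNN$ with the cone of $\cNN$ sitting inside $V\oplus F$, and cap with $e(V)$. The only (harmless) divergences are that you assert the cone is globally the pullback $\bar p\sta W_{\cNN}$ and split the ambient bundle by hand, whereas the paper only uses the restriction of the cone to $\cNN$ and deforms $\tilde F$ to $W\oplus\bar p\sta E_1\dual$.
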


\begin{proof}
Let $\cO_\cNN(\tilde F)\to\Ob_{\cMM}$ be an epimorphism of a locally
free sheaf on $\cMM$ to the obstruction sheaf of $\cMM$.  The
obstruction theory of $\cMM$ provides us with a virtual normal cone
$C_{\cMM}\sub \tilde F$. Since the restriction $\Ob_{\cMM}|_{\cNN}$
splits as the direct sum $\Ob_\cNN\oplus E_2$, $\tilde F|_\cNN$ is
an extension of $E_2$ by $F=\mathrm{Ker}(\tilde
F|_\cNN\twoheadrightarrow E_2)$, and $F$ surjects onto $\Ob_\cNN$.

Because $\nu$ is surjective, by composing $\nu$ with the
homomorphism $\cO_{\cMM}(\tilde F)\to\Ob_{\cMM}$ we obtain a
surjective homomorphism of vector bundles $\tilde F\to \bar p\sta
E_1\dual$. We let $W\sub \tilde F$ be its kernel. By the proof of
Lemma \ref{j2.1}, the support of the cone $C_{\cMM}$ lies in the
subvector bundle $W\sub\tilde F$.

We now look at the localized virtual cycle $[\cMM]\virt\loc$. First,
by our construction it is the image of $[C_{\cMM}]$ under the
localized Gysin map
$$[\cMM]\virt\loc=s_{\tilde F,\mathrm{loc}}^![C_{\cMM}]\in H\lsta(\cNN,\QQ).
$$
(See notation in section 2.) Since $C_{\cMM}$ lies in the kernel of
$\tilde F\to \bar p\sta E_1\dual$ while the cosection $\tilde
F\to\Ob_{\cMM}\to\cO_{\cMM}$ factors through $\tilde F\to \bar p\sta
E_1\dual$, we can deform the bundle $\tilde F$ to $W\oplus \bar
p\sta E_1\dual$ so that $C_{\cMM}\sub W$ and $\bar p\sta
E_1\dual\to\cO_{\cMM}$ remain unchanged. Then we must have
$$s_{\tilde F,\mathrm{loc}}^![C_{\cMM}]=
s_{W\oplus \bar p\sta E_1\dual,\mathrm{loc}} ^![C_{\cMM}]
=s_W^![C_{\cMM}\times_{\cMM} s^!_{\bar p\sta E_1\dual,\mathrm{loc}}
[0_{\bar p\sta E_1\dual}]].
$$
Now suppose $\cNN$ is connected and $r=\rank E_1$. An easy
computation shows that
$$s^!_{\bar p\sta E_1\dual,\mathrm{loc}} [0_{\bar p\sta E_1\dual}]=(-1)^r[\cNN],
$$
where $[\cNN]$ is the fundamental class of $\cNN$. Then since over
$\cNN\sub\cMM$, $W=V\oplus F$ and $C_{\cMM}\times_{\cMM}\cNN$
coincides with the virtual normal cone $C_\cNN\sub F$ of $\cNN$, we
have
$$s_W^![C_{\cMM}\times_{\cMM} s^!_{\bar p\sta
E_1\dual,\mathrm{loc}} [0_{\bar p\sta E_1\dual}]]=(-1)^r e(V)\cap
s^!_{F}[C_\cNN]=(-1)^r e(V)\cap [\cNN]\virt.
$$
This proves the proposition in case $\cNN$ is connected.

The general case can be treated in exactly the same way by working
over each individual connected component of $\cNN$.
\end{proof}

\begin{coro}\lab{twist}
Suppose furthermore that $\pi_*f^*L$ is a trivial vector bundle on
$\cNN$, then
\[
[\cMM]\virt\loc=(-1)^{h^0(f^*L)} [\cNN]{\virt}\cap c_l(-\pi_!f^*L)
\]
for $l$ the rank of $-\pi_!f^*L=R^1\pi_*f^*L - \pi_*f^*L$.
\end{coro}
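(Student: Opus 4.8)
The plan is to read this off directly from Proposition \ref{fundtwist} by a short Chern-class computation; the entire content is that the extra hypothesis ``$\pi_*f^*L$ trivial'' — as opposed to merely locally free — is exactly what collapses the Euler class $e(V)$ occurring there into the single Chern class $c_l(-\pi_!f^*L)$.

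First I would recall from \eqref{yh5.21} that $V$ is the kernel of the surjection $E_2=R^1\pi_*f^*L\to E_1\dual=(\pi_*f^*L)\dual$, so that on $\cNN$ there is a short exact sequence $0\to V\to E_2\to E_1\dual\to 0$, where I write $E_1:=\pi_*f^*L$. From this, $\rank V=\rank E_2-\rank E_1=\rank(-\pi_!f^*L)=l$, hence $e(V)=c_l(V)$ is the \emph{top} Chern class of $V$. Since $E_1$ is trivial, so is $E_1\dual$, so $c(E_1)=c(E_1\dual)=1$; the Whitney formula applied to the sequence above then gives $c(V)=c(E_2)$, and in particular $c_l(V)=c_l(E_2)$. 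On the other hand, by the definition of Chern classes of a $K$-theory class together with $c(E_1)=1$, one has $c(-\pi_!f^*L)=c(E_2-E_1)=c(E_2)c(E_1)^{-1}=c(E_2)$, so that $c_l(-\pi_!f^*L)=c_l(E_2)=c_l(V)=e(V)$. Substituting this identity into the formula of Proposition \ref{fundtwist}, and noting that $h^0(f^*L)=\rank E_1$ is constant along $\cNN$ because $\pi_*f^*L$ is trivial, the sum over the connected components $\cNN_i$ of $\cNN$ reassembles $\sum_i[\cNN_i]\virt$ into $[\cNN]\virt$ and yields the claimed formula $[\cMM]\virt\loc=(-1)^{h^0(f^*L)}[\cNN]\virt\cap c_l(-\pi_!f^*L)$.

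I do not expect a real obstacle: once Proposition \ref{fundtwist} is granted, this is bookkeeping. The one point to be careful about is the numerology $l=\rank V$, which is what makes $c_l(-\pi_!f^*L)$ the Euler class of $V$ rather than a lower Chern class; and, relatedly, to note that triviality of $\pi_*f^*L$ cannot be weakened to local freeness, since if $c_1(E_1)\ne 0$ then $c(E_1)^{-1}$ and $c(E_1\dual)^{-1}$ no longer agree, so $e(V)=[c(E_2)c(E_1\dual)^{-1}]_l$ and $c_l(-\pi_!f^*L)=[c(E_2)c(E_1)^{-1}]_l$ are genuinely different classes and the clean statement would break.
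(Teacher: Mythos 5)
Your proposal is correct and is essentially the paper's own argument: the paper's proof is the one-line observation that $e(V)=c_l(-\pi_!f^*L)$ under the triviality hypothesis, and your computation via the exact sequence $0\to V\to E_2\to E_1\dual\to 0$ and the Whitney formula is exactly the verification of that observation, together with the (correct) bookkeeping of the sign and of $\rank V=l$.
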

\begin{proof} Observe that $e(V)=c_l(-\pi_!f^*L)$ under the
assumptions.
\end{proof}

\subsection{The case of \' etale coverings}

The above formula enables us to recover the GW-invariants of a
surface $S$ without descendant insertions, first proved by
Lee-Parker \cite{Lee-Parker}.

As before, let $S$ be a smooth {minimal general type} surface with
$p_g\ne 0$ and let $\theta\in H^0(\Omega_S^2)$ be a general member;
let $(\beta,g)\in H_2(S,\ZZ)\times \ZZ^{\ge 0}$. By Theorem
\ref{reduct}, the GW-invariants vanish unless $\beta=dK_S$. In case
$\beta=dK_S$, for $\gamma_i\in H^{*}(S,\QQ)$, \begeq\lab{GW-1}
\langle \gamma_1,\cdots,\gamma_n \rangle^S_{\beta,g}=0
\endeq
if one of $\gamma_i\in H^{\ge 3}(S,\QQ)$, by Theorem \ref{vanLP}
(2). On the other hand, if all $\gamma_i\in H^{\le 2}(S,\QQ)$ for
all $i$ and $\gamma_i\in H^{\le 1}(S,\QQ)$ for at least one $i$,
then \eqref{GW-1} holds by dimension count. Hence we are reduced to
consider the case $\gamma_i\in H^2(S,\QQ)$. But then {according to
Conjecture} \ref{reduct}, we {expect}
\begin{equation}
\lab{yh6.6} \langle
\gamma_1,\cdots,\gamma_n\rangle^S_{dK_S,g}=d^n\prod_{i=1}^n(\gamma_i\cdot
K_S) \cdot\langle 1\rangle^S_{dK_S,g}=d^n\prod_{i=1}^n(\gamma_i\cdot
K_S) \cdot\langle 1\rangle^{X,\mathrm{loc}}_{d,g}
\end{equation}
where $X$ is the total space of a theta characteristic of parity
$\chi(\cO_S)$ on a smooth curve $D$ of genus $h=K_S^2+1$. As
mentioned above, this equality holds true for a wide class of
minimal surfaces of general type with $p_g>0$, including those which
admit smooth canonical curves. Obviously, {the right hand side} of
\eqref{yh6.6} is trivial unless the virtual dimension of
$\cM_{g,0}(X,d)$ is zero, which happens exactly when
$g-1=dK_S^2=d(h-1)$.

{We now consider the only non-trivial case $g=d(h-1)+1$. This choice
of $g$ forces any $u\in \cM_{g,0}(D,d)$ to be a $d$-fold \'etale
cover of $D$. Since an \'etale cover $u:C\to D$ is a smooth and
isolated point in $\cM_{g,0}(D,d)$, a direct application of Theorem
\ref{reduct} and Corollary \ref{twist} shows}

{
\begin{prop}[\cite{Lee-Parker}]
\[
\langle 1\rangle ^{X,\mathrm{loc}}_{d,g} = \sum_{u:d\text{-fold
\'etale cover of }D} \frac{(-1)^{h^0(u^*L)}}{| \mathrm{Aut}(u) |}.
\]
\end{prop}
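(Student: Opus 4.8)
The plan is to identify the degeneracy locus $Z(\sigma)$ inside $\cM_{g,0}(X,d)$ with the moduli stack $\cNN=\cM_{g,0}(D,d)$ of stable maps into the zero section, to show that at the critical genus $g=d(h-1)+1$ this stack collapses to a finite set of reduced points indexed by the $d$-fold connected \'etale covers of $D$, and then to read the localized virtual cycle off Proposition \ref{fundtwist}. Concretely, first I would note that the standard two-form $\theta$ is a section of $K_X=p\sta L$ whose vanishing locus is exactly the zero section $D\subset X$, so $\hat\theta$ vanishes identically along $D$; hence a stable map $u\mh C\to X$ of degree $d>0$ is $\theta$-null iff its image lies in $D$ (the ``only if'' being Theorem \ref{vanLP}(2)), i.e. $Z(\sigma)=\cM_{g,0}(D,d)=:\cNN$, embedded via the zero section. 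Since $X$ is a surface with $K_X=p\sta L$ and $\deg L=h-1$, the virtual dimension of $\cM_{g,0}(X,d)$ is $-d(h-1)+(g-1)$, which vanishes exactly when $g=d(h-1)+1$; thus for that $g$ the invariant $\langle1\rangle^{X,\mathrm{loc}}_{d,g}$ is the degree of the $0$-cycle $[\cM_{g,0}(X,d)]\virt\loc\in H_0(\cNN,\QQ)$, while for every other $g$ it vanishes by dimension.

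Next I would pin down $\cNN$ at the critical genus by a Riemann--Hurwitz count. For any degree-$d$ stable map $u\mh C\to D$ with $C$ connected nodal of arithmetic genus $g$, writing $g=\sum_j g(C_j)+b_1$ over the components $C_j$ (with $b_1$ the first Betti number of the dual graph), every non-contracted component contributes $g(C_j)\ge d_j(h-1)+1$, so (assuming $h\ge 2$, the only case relevant here) $g\ge d(h-1)+1$, with equality forcing: exactly one non-contracted component, no positive-genus contracted components, a tree dual graph, and that single component smooth with $u$ unramified. Since in a tree a contracted rational leaf would carry only one special point and hence be unstable, equality leaves just a smooth irreducible \'etale cover. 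Therefore $\cNN$ is the disjoint union, over the finitely many $d$-fold connected \'etale covers $u\mh C\to D$ of genus $g$, of the classifying stacks $B\Aut(u)$ of their deck groups; and each $[u]$ is a reduced point because $du\mh T_C\mapright{\sim}u\sta T_D$ is an isomorphism, so in the tangent--obstruction sequence $0\to T^1\to H^1(C,T_C)\mapright{\sim}H^1(C,u\sta T_D)\to T^2\to0$ (using $H^0(C,T_C)=H^0(C,u\sta T_D)=0$ for $h\ge2$) we get $T^1=T^2=0$. In particular $\cNN$ is smooth of its expected dimension $0$, so $[\cNN_u]\virt=\frac{1}{|\Aut(u)|}[\mathrm{pt}]$ on the component indexed by $u$.

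Then I would invoke Proposition \ref{fundtwist}. Since $\cNN$ is a finite union of classifying stacks of finite groups, every coherent sheaf on it, in particular $R^1\pi\lsta f\sta L$, is automatically locally free, so the hypothesis of that proposition holds. On the component indexed by $u\mh C\to D$, the surjection \eqref{yh5.21}, namely $E_2=R^1\pi\lsta f\sta L\to R^1\pi\lsta(f\sta L\dual\otimes\omega_{\cC/\cNN})=E_1\dual$, is induced by $u\sta L\cong u\sta L\dual\otimes u\sta K_D\to u\sta L\dual\otimes\omega_C$, which is an isomorphism because $u$ is \'etale and hence $u\sta K_D=\omega_C$ canonically; thus the kernel bundle $V$ is zero and $e(V)=1$. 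Moreover $h^0(f\sta L)$ restricts to $h^0(C,u\sta L)=h^0(u\sta L)$ there. Proposition \ref{fundtwist} then gives
\[
[\cM_{g,0}(X,d)]\virt\loc=\sum_u(-1)^{h^0(u\sta L)}\,[\cNN_u]\virt\cap e(V)=\sum_u\frac{(-1)^{h^0(u\sta L)}}{|\Aut(u)|}\,[\mathrm{pt}],
\]
the sum over $d$-fold connected \'etale covers of $D$; taking degrees yields the asserted formula.

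The step I expect to be the real obstacle is not any single computation but checking that Proposition \ref{fundtwist} is genuinely in force: for a general $g$ the sheaf $R^1\pi_*f^*L$ on $\cM_{g,0}(D,d)$ need not be locally free, and the whole point is that at $g=d(h-1)+1$ the target $\cNN$ is zero-dimensional and reduced — which is exactly what the Riemann--Hurwitz bookkeeping and the $du$-isomorphism argument of the middle step establish — so that local freeness is automatic and the kernel bundle $V$ even vanishes. Once that is secured, everything else is just tracking the sign $(-1)^{h^0(u^*L)}$ and the stacky multiplicity $|\Aut(u)|^{-1}$.
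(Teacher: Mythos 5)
Your proposal is correct and follows essentially the same route as the paper: the paper likewise reduces to the critical genus $g=d(h-1)+1$ by dimension count, observes that this forces every map in $\cM_{g,0}(D,d)$ to be a smooth, isolated $d$-fold \'etale cover, and then reads off the answer from Corollary \ref{twist} (the special case of Proposition \ref{fundtwist} you invoke, with $-\pi_!f\sta L$ of rank zero, i.e.\ $V=0$). Your write-up merely supplies the Riemann--Hurwitz bookkeeping and the $T^1=T^2=0$ computation that the paper leaves implicit.
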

}

\section{Low-degree GW-invariants of surfaces}

In the remainder of this paper, we shall use degeneration to
determine low degree GW-invariants of surfaces with positive $p_g$.
To keep this paper in a manageable size, we will only state the
degeneration formula for localized GW-invariants and prove the
closed formulas for degree one and degree two localized
GW-invariants. These formulas were conjectured by Maulik and
Pandharipande \cite{MaulikPand}.

\subsection{A degeneration formula}\lab{section4.1}

As degeneration formulas are more easily expressible for the
GW-invariants of stable morphisms with not necessary connected
domains, we shall work with such moduli spaces in the remainder of
this paper. As such, for integers $n, \chi$ and homology class
$\beta\in H_2(Y,\ZZ)$ of a smooth projective variety $Y$, we shall
denote by
$$\mxn(Y,\beta)^\bullet
$$
the moduli of stable morphisms $f\mh C\to Y$ to $Y$ from not
necessarily connected $n$-pointed nodal curves $C$ of Euler
characteristic $\chi(\cO_C)=\chi$ and of fundamental class
$f\lsta([C])=\beta$, such that the restriction of $f$ to each
connected component of $C$ is non-constant. As usual, we call $f$
stable when the automorphism group of $f$ is finite.

Now suppose $Y$ has a holomorphic two-form $\theta$. The preceding
discussion of localized GW-invariants can be adopted line by line to
this moduli space. Consequently, we shall denote the resulting
GW-invariants, of stable maps with not necessarily connected domain
but with non-constant restriction to any connected components, by
$$
\langle  \prod_{j=1}^{n}\tau_{\alpha_j}(\gamma_j)
\rangle^{Y,\bullet}_{\beta,\chi,\mathrm{loc}}.
$$

For relative invariants, we fix a pair $(Y,E)$ with $E\sub Y$ a
smooth divisor; we fix the data $(\chi,\beta,n)$ plus a partition
$\eta$ of $\int_E\beta$; namely,
$\eta=(0\leq\eta_1\leq\cdots\leq\eta_\ell)$ with
$\sum\eta_i=\int_E\beta$. Following the convention, we shall write
$\ell=\ell(\eta)$ and $|\eta|=\sum\eta_i$. We then form the moduli
of relative stable morphisms that consists of morphisms
$$f\mh (C,p_1,\cdots,p_n, q_1,\cdots,q_\ell)\to Y
$$
from $(n+\ell)$-pointed curves to $Y$ as before with one additional
requirement: as divisors
$$f\upmo(E)=\sum \eta_i q_i.
$$
For such $f$, we define its automorphisms be $\varphi\mh
C\mapright{\cong} C$ so that $\varphi$ fixes $p_i$ and $q_j$, and
$f\circ\varphi=f$. We call $f$ stable if the automorphism group of
$f$ is finite.

This moduli space is a Deligne-Mumford stack; it is not proper in
general. In \cite{Li1}, the second author constructed its
compactification by considering all relative stable maps to the
semi-stable models of $(Y,E)$. The resulting moduli space
$\mxn^{(Y,E)}(\beta,\eta)\bul$ is proper, has perfect obstruction
theory, and has two evaluation morphisms: one ordinary and one
special:
$$ev(f)=\bl f(p_1),\cdots, f(p_n)\br\in Y^n
,\quad \tilde{ev}(f)=(f(q_1),\cdots,f(q_\ell))\in E^{\ell(\eta)}.
$$
For classes $\gamma_i\in H\sta(Y)$ and non-negative integers
$\alpha_i$, the relative GW-invariants
$$
\langle\tau_{\alpha_1}(\gamma_1)\cdots
\tau_{\alpha_n}(\gamma_n)\rangle^{(Y,E),\bullet}_{\chi,\beta,\eta}
$$
is the direct image
$$
\tilde{ev}\lsta\left( \int_{[\mxn^{(Y,E)}(\beta,\eta)^\bullet]\virt}
ev\sta(\gamma_i\times\cdots\times\gamma_n)\cdot
\psi_1^{\alpha_1}\cdots\psi_n^{\alpha_n}\right)\in
H\lsta(E^{\ell(\eta)}).
$$

The relative invariants allow us to use degeneration to study the
GW-invariants of smooth varieties. Suppose a smooth variety $W$
specialize to a union of two smooth varieties $Y_1\cup Y_2$
intersecting transversally at $E=Y_1\cap Y_2$, the GW-invariants of
$W$ can be recovered by the relative GW-invariants of the pairs
$(Y_1,E)$ and $(Y_2,E)$.

In this section, we shall state a parallel degeneration theory for
the localized GW-invariants of $X$ that is the total space of a
theta characteristic over a smooth curve $D$.

To this end, we continue denoting by $X$  the total space of a theta
characteristic $L$ of a smooth curve $D$; we pick a point $q\in D$
and denote by $E$ the fiber of $X$ over $q$. We then blow up
$X\times\Ao$ along $E\times 0$, resulting a family $\cX$ over $\Ao$
whose fiber over $t\ne 0$ is the original $X$; its central fiber
$\cX_0$ is the union of $X$ with $E\times\Po$, intersecting
transversally along $E\sub X$ and $E\times 0\sub E\times\Po$. To
distinguish the $X\sub \cX_0$ from $\cX_t$, we shall denote by
$Y_1\sub \cX_0$ the component $X$ and denote $E\times\Po\sub \cX_0$
by $Y_2$. We continuous to denote $E=Y_1\cap Y_2$. Since $X$ is the
total space of the line bundle $L$ over $D$, $\cX$ is the total
space of a line bundle $\cL$ over $\cD$ of which $\cD$ is the
blowing up of $D\times\Ao$ along $(q,0)$ and $\cL$ is the pull back
of $L$ via the composite of the projections $\cX\to \cD\to D$. We
denote the projection by $p\mh \cX\to\cD$ and denote $D_i=p(Y_i)\sub
\cD_0$.

Our next step is to extend the standard holomorphic two-form
$\theta$ of $X$ to the family $\cX$. First, since $\cX$ is the total
space of $\cL$ over $\cD$, $\cL$ admits a tautological section
$id\in\Gamma(\cX,p\sta\cL)$ that takes value $v\in \cL$ at the point
$v\in\cX$. On the other hand, the isomorphism $L^{\otimes 2}\cong
K_D$ induces an isomorphism $\cL^{\otimes 2}\to
\omega_{\cD/T}(-D_2)$. In this way, the tautological section $id$
composed with $p\sta\cL\to p\sta\cL\dual\otimes \omega_{\cD/T}$ and
the isomorphism
\[ \wedge^2 \Omega_{\cX/T}(\log \cX_0)\cong
p^*\cL^\vee\otimes \omega_{\cD/T},
\]
provide us with a two-form
\[
\Theta\in\Gamma(\cX,\wedge^2 \Omega_{\cX/T}(\log\cX_0)).
\]
Its restriction to $\cX_t$ for $t\ne 0$ is the standard two-form
$\theta$; its restriction to $Y_1$ is the one induced by $\theta$
via
$$\Omega_{Y_1}^2\lra \Omega_{Y_1}^2(E)\,\cong\,
\wedge^2 \Omega_{\cX/T}(\log\cX_0)\otimes_{\cO_{\cX}}\cO_{Y_1};
$$
it vanishes along $Y_2$.

Because the restriction $\theta_1=\Theta|_{Y_1}$ vanishes along $E$,
it defines a sheaf homomorphism
$$\theta_1: T_{Y_1}(\log E)\lra \Omega^1_{Y_1}.
$$
Thus for any stable map $f\mh C\to Y_1$ with $f\upmo(E)$ a divisor
in $C$, $\theta_1$ induces a sheaf homomorphism
\begin{equation}\lab{theta2}
f\sta T_{Y_1}(\log E)\lra f\sta \Omega^1_{Y_1}\lra\omega_C;
\end{equation}
this homomorphism will define us the localized relative
GW-invariants of $(Y_1,E)$.

Like the case of ordinary GW-invariants, the localized relative
GW-invariants relies on a cosection of the obstruction sheaf
$\Ob_\cM$ of the moduli of the relative stable morphisms\footnote{
Here as before $d\in H_2(Y_1,\ZZ)$ is the class generated by
$d$-fold of the zero section of $Y_1\to D$; $\eta$ is a partition of
$d$.} $\mxn^{(Y_1,E)}(d,\eta)^{\bullet}$. For us, it is instructive
to see how the two-form $\theta_1$ induces such a cosection. Without
getting into the details of the notion of pre-deformable morphisms
that is essential to the construction of the moduli
$\mxn^{(Y_1,E)}(d,\eta)^\bullet$, we shall describe the obstruction
space and the cosection at a closed point
$\xi\in\mxn^{(Y_1,E)}(d,\eta)^{\bullet}$ that is represented by a
morphism $f\mh C\to Y_1$. In this case, as shown in \cite{Li2}, the
obstruction space $Ob_\xi=\Ob_\cM\otimes\kk(\xi)$ is the cokernel
$$\Ext^1(\Omega_{C}(R),\cO_{C})\lra
H^1(C,f^*T_{Y_1}(\log E))\lra Ob_\xi\lra 0,
$$
where $R$ is the divisor $f\upmo(E)\sub C$; like before, the
homomorphism \eqref{theta2} induces a homomorphism
$$H^1(C, f\sta T_{Y_1}(\log E))\lra H^1(C,\omega_C)=\CC
$$
that lifts to a homomorphism $Ob_\xi\to \CC$; again, this
homomorphism is trivial if and only if $f(C)\sub D_1\sub Y_1$.
Likewise, this construction carries to the family case to give us a
cosection of the obstruction sheaf $\Ob_\cM$; the cosection is
surjective away from those relative stable maps whose image lie in
the zero section of $Y_1$.

We state the property of localized relative GW-invariants of
$(Y_1,E)$ as a proposition; its proof will appear in a separate
paper \cite{KL2}.

\begin{prop}\lab{proprelY1} Let $(Y_1,E)$, let $\theta_1$ and let
$\cM=\mxn^{(Y_1,E)}(d,\eta)$ be as before. Let $\tilde{ev}\mh \cM\to
E^\ell$, $\ell=\ell(\eta)$, be the special evaluation map and let
$E_0=E\cap D_1$. Then the holomorphic two-form $\theta_1$ induces a
canonical cosection $\sigma_1\mh\Ob_\cM\to\cO_\cM$ that is
surjective away from $\tilde{ev}\upmo(E_0^\ell)$. Consequently, its
localized virtual cycle $[\cM]\virt\loc\in H\lsta(E_0^\ell)\sub
H\lsta(E^\ell)$ and the localized relative GW-invariants take values
$$\langle
\tau_{\alpha_1}(\gamma_1)\cdots\tau_{\alpha_n}(\gamma_n)
\rangle^{(Y_1,E),\bullet}_{\chi,\eta,\mathrm{loc}} \in
H\lsta(E^\ell).
$$
\end{prop}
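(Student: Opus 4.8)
The plan is to transplant, into the relative logarithmic setting, the cosection construction of \S\,3.1 together with the Localization Lemma~\ref{main-lemma}. Let $f\mh\cC\to Y_1$, $\pi\mh\cC\to\cM$ be the universal family over $\cM$, with $\cR=f\upmo(E)\sub\cC$ the universal relative divisor; over the strata where the target is a genuine expansion $Y_1[k]=Y_1\cup\Delta_1\cup\cdots\cup\Delta_k$ one reads $Y_1[k]$, with its distinguished divisors, for $Y_1$. By the relative obstruction theory of \cite{Li2}, $\Ob_\cM$ is the cokernel of $\cE xt^1_\pi(\Omega_{\cC/\cM}(\cR),\cO_\cC)\lra R^1\pi\lsta f\sta T_{Y_1}(\log E)$.

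First I would build the cosection. Since $\theta_1$ is a regular two-form on $Y_1$ vanishing along $E$, its contraction with a logarithmic vector field is again regular, so $\theta_1$ produces a chain $f\sta T_{Y_1}(\log E)\lra f\sta\Omega^1_{Y_1}\lra\Omega_{\cC/\cM}\lra\omega_{\cC/\cM}$, hence a homomorphism $R^1\pi\lsta f\sta T_{Y_1}(\log E)\lra R^1\pi\lsta\omega_{\cC/\cM}=\cO_\cM$. I then claim that its composite with $\cE xt^1_\pi(\Omega_{\cC/\cM}(\cR),\cO_\cC)\to R^1\pi\lsta f\sta T_{Y_1}(\log E)$ is zero. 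The proof should copy Lemma~\ref{lem-yh2.3}: the induced homomorphism $\Omega_{\cC/\cM}(\cR)\dual\lra f\sta T_{Y_1}(\log E)\lra f\sta\Omega^1_{Y_1}\lra\omega_{\cC/\cM}$ is antisymmetric (because $\theta_1$ is), hence vanishes where $\Omega_{\cC/\cM}$ is locally free, and, upon smoothing each node inside an auxiliary family exactly as in that proof, torsion-freeness of $\Omega$ of the smoothing propagates the vanishing across the nodes. Being canonical, the recipe descends to the cosection $\sigma_1\mh\Ob_\cM\to\cO_\cM$.

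Next I would determine $Z(\sigma_1)$. By Serre duality the fibre of $\sigma_1$ at $[f\mh C\to Y_1]$ is dual to $\CC=H^0(\cO_C)\lra H^0\bl(f\sta T_{Y_1}(\log E))\dual\otimes\omega_C\br$, which, $\cO_C$ being globally generated, vanishes iff the homomorphism $T_{C\reg}\to f\sta\Omega^1_{Y_1}$ built from $df$ and $\theta_1$ is trivial over $C\reg$; but on $Y_1$ the form $\theta_1$ is the standard two-form of $X=Y_1$, so this says precisely that $f$ is $\theta$-null, which by the degeneracy-locus computation of \S\,3.1 forces $f(C)\sub D_1$. For such an $f$ one has $f\upmo(E)=f\upmo(E\cap D_1)=f\upmo(E_0)$, so every relative marking $q_j$ maps to the point $E_0$, whence $Z(\sigma_1)\sub\tilde{ev}\upmo(E_0^\ell)$ on the open locus of maps to the unexpanded $Y_1$. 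For a map to an expansion $Y_1[k]$ the same analysis, applied component by component together with the vanishing of the induced two-forms along the distinguished zero sections of the bubbles $\Delta_i$ (all of which contract to $D_1$ and meet the relative divisor at $E_0$), again confines the relative markings over $E_0$; hence $Z(\sigma_1)\sub\tilde{ev}\upmo(E_0^\ell)$ and $\sigma_1$ is surjective off $\tilde{ev}\upmo(E_0^\ell)$.

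Finally, $\cM$ is proper \cite{Li1} and $E_0^\ell$ is a single reduced point, so $Z(\sigma_1)\sub\tilde{ev}\upmo(E_0^\ell)$ is proper; Lemma~\ref{main-lemma} then produces $[\cM]\virt\loc\in\BM(Z(\sigma_1))$, an ordinary homology class with $i\lsta[\cM]\virt\loc=[\cM]\virt$. Capping it with $ev\sta(\gamma_1\times\cdots\times\gamma_n)\cdot\psi_1^{\alpha_1}\cdots\psi_n^{\alpha_n}$ and pushing forward along $\tilde{ev}$ yields the localized relative invariant $\langle\tau_{\alpha_1}(\gamma_1)\cdots\tau_{\alpha_n}(\gamma_n)\rangle^{(Y_1,E),\bu}_{\chi,\eta,\mathrm{loc}}\in H\lsta(E^\ell)$, which is supported over $E_0^\ell$ since $Z(\sigma_1)$ lies over it. I expect the main obstacle to be making the first two steps precise over the boundary of $\cM$: one must exhibit the two-forms on the $\PP^1$-bubbles $\Delta_i$, patch them with $\theta_1$ into a cosection of the \emph{full} relative obstruction sheaf compatibly with pre-deformability at the distinguished nodes over $E$, and check that both the antisymmetry vanishing of the first step and the confinement of the relative markings of the second survive this patching; these details are carried out in \cite{KL2}.
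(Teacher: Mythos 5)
Your outline follows the route the paper itself indicates: in fact the paper does not prove this proposition at all --- it explicitly defers the proof to \cite{KL2} --- and what it does supply is precisely your first two steps in pointwise form, namely the cosection at a closed point $[f\mh C\to Y_1]$ as the lift to $\coker\bl\Ext^1(\Omega_C(R),\cO_C)\to H^1(C,f\sta T_{Y_1}(\log E))\br$ of the map induced by $f\sta T_{Y_1}(\log E)\to f\sta\Omega^1_{Y_1}\to\omega_C$, together with the assertion that its degeneracy locus is the set of maps with image in the zero section $D_1$. Your family version of the antisymmetry argument of Lemma \ref{lem-yh2.3}, and your observation that $f(C)\sub D_1$ forces every relative marking into $E_0=E\cap D_1$, are the right way to flesh this out, and you correctly isolate the boundary strata (expanded targets, pre-deformability at the distinguished nodes) as the part that genuinely requires the work deferred to \cite{KL2}.

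One step is wrong as stated, and it is the step the whole localization machinery hinges on: properness. $Y_1\cong X$ is the total space of a line bundle over $D$, hence quasi-projective but not proper, so $\cM=\mxn^{(Y_1,E)}(d,\eta)^\bullet$ is \emph{not} proper; the properness theorem of \cite{Li1} is for projective targets. Nor is $\tilde{ev}\upmo(E_0^\ell)$ proper: a relative stable map whose markings already sit over the origin $E_0$ of the fibre $E=L_q$ can be rescaled by the $\CC^*$-action on the fibres of $Y_1\to D$ without moving any marking, and such a family has no limit as the scaling parameter tends to infinity. (Compare the paper's treatment of $(Y_2,E)$, where it is the properness of $\tilde{ev}$ itself --- not of the moduli space --- that is invoked, and only Borel--Moore classes are obtained.) So neither of your two premises delivers properness of $Z(\sigma_1)$, and containment in $\tilde{ev}\upmo(E_0^\ell)$ is not enough. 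The correct source of properness is your own description of the degeneracy locus: over the open stratum, $Z(\sigma_1)$ consists of maps factoring through the zero section, i.e.\ it is a closed substack of the moduli of relative stable maps to the \emph{proper} pair $(D_1,E_0)\cong(D,q)$, which is proper; the extension of this statement over the boundary strata is again part of what \cite{KL2} must supply. With that substitution your argument matches the paper's intended proof.
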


Here we omit the explicit reference to $d$ in the notation for the
localized GW-invariants because $\eta$ is a partition of $d$.

For the pair $(Y_2,E)$, since the form $\Theta$ restricts to zero
along $Y_2$, we shall take its ordinary relative GW-invariants.
Though $Y_2$ is not proper, because $Y_2=\Ao\times\Po$ and $E$ is
one of the $\Ao$ in the product, the special evaluation morphism
$$\tilde{ev}: \mxn^{(Y_2,E)}(d,\eta)^\bullet\lra E^\ell
$$
is proper. Consequently, the relative GW-invariants of $(Y_2,E)$ is
well-defined and take values
$$\langle
\tau_{\alpha_1}(\gamma_1)\cdots\tau_{\alpha_n}(\gamma_n)
\rangle^{(Y_2,E),\bullet}_{\chi,\eta} \in \BM(E^\ell).
$$

In \cite{KL2}, we shall use the intersection pairing
$$\star: H\lsta(E^l,\QQ)\times H\lsta^{BM}(E^l,\QQ)\lra\QQ
$$
to relate the localized GW-invariants of $X$ with the pairings of
the localized relative GW-invariants of $(Y_1,E)$ and the relative
GW-invariants of $(Y_2,E)$. For any partition $\eta$ we define
$\mathbf{m}(\eta)=\prod \eta_i$.

\begin{theo}[\cite{KL2}]
For any integers $\alpha_1,\cdots,\alpha_n\in\ZZ^{\geq 0}$, any
splitting $n_1+n_2=n$ and classes $\gamma_{i(\leq n_1)}\in H^{\geq
1}(Y_1)$ and $\gamma_{j(>n_1)}\in H^{\geq 1}(Y_2)$, we have
\[\langle
\prod_{j=1}^n\tau_{\alpha_j}(\gamma_j)
\rangle^{X,\bullet}_{\chi,d,\mathrm{loc}}=
\sum\frac{\mathbf{m}(\eta)}{|\Aut(\eta)|}\cdot\langle
\prod_{j=1}^{n_1}\tau_{\alpha_j}(\gamma_j)
\rangle^{(Y_1,E),\bullet}_{\chi_1,\eta,\mathrm{loc}} \star \langle
\prod_{j=n_1+1}^n\tau_{\alpha_j}(\gamma_j)
\rangle^{(Y_2,E),\bullet}_{\chi_2,\eta}.
\]
\end{theo}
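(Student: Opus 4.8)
The plan is to transcribe Jun Li's proof of the degeneration formula for ordinary GW-invariants \cite{Li2} into the cosection-localized setting, carrying the cosection along at every stage and invoking the localized Gysin map and the deformation invariance of the localized virtual cycle from Section~2. First I would take the family $\cX/\Ao$ together with the logarithmic two-form $\Theta\in\Gamma(\cX,\wedge^2\Omega_{\cX/T}(\log\cX_0))$ constructed above, and form Li's moduli stack $\cM\to\Ao$ of relative stable morphisms to the fibres of $\cX/\Ao$ (relative to the singular locus of the central fibre), with its perfect obstruction theory and relative obstruction sheaf $\Ob_{\cM/\Ao}$. The construction of Section~3, carried out in this relative/logarithmic context exactly as for $(Y_1,E)$ in the discussion preceding Proposition~\ref{proprelY1}, produces a cosection $\sigma\mh\Ob_{\cM/\Ao}\to\cO_\cM$ whose restriction to the fibre over $t\neq 0$ is the cosection of $\mxn(X,d)^\bullet$ defining $\langle\,\cdot\,\rangle^{X,\bullet}_{\chi,d,\mathrm{loc}}$, and whose restriction to $\cM_0$ is the cosection built from $\Theta|_{\cX_0}$ --- hence it equals the cosection $\sigma_1$ of $\theta_1$ on the $Y_1$-component and is identically zero on the $Y_2$-component.

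Next I would check the hypotheses needed to run the deformation-invariance machinery. The degeneracy locus $Z(\sigma)$ is the locus of relative stable maps whose image lies in the zero section of $\cX/\cD$; it is identified with the moduli of relative stable maps to $\cD/\Ao$ (compare Lemma~\ref{yh5.5}), which is proper over $\Ao$, so $Z(\sigma)$ is proper over $\Ao$. One must also check that $\sigma$ lifts to a cosection of the absolute obstruction sheaf $\Ob_\cM$; this is the analogue of Lemma~\ref{con}, and I would prove the required vanishing of the composite $\cO_\cM\to\Ob_{\cM/\Ao}\to\cO_\cM$ (the second arrow being $\sigma$) by the same mechanism as in the deformation-to-the-normal-cone case, since the Kodaira-Spencer class of $\cX/\Ao$ along $\cX_0$ is supported on the exceptional locus of the blow-up and pairs trivially with $\Theta$ against the relevant curve classes. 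Granting this, Proposition~\ref{constancy} yields that $[\cM_t]\virt\loc$ is independent of $t\in\Ao$ as a class in $H_\ast(Z(\sigma))$; pairing with $\ev^\ast\bigl(\prod_j\gamma_j\bigr)\cdot\prod_j\psi_j^{\alpha_j}$ and pushing forward then shows that the left-hand side of the theorem --- the localized descendant invariant of $\cX_t=X$ for $t\neq 0$ --- equals the corresponding invariant of the central fibre $\cX_0=Y_1\cup_E Y_2$.

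It then remains to expand the central-fibre contribution. Here I would use Li's decomposition of $\cM_0$ as the union of the images of the gluing morphisms
\[
\Phi_\eta\mh \mxn^{(Y_1,E)}(d,\eta)^\bullet\times_{E^{\ell(\eta)}}\mxn^{(Y_2,E)}(d,\eta)^\bullet\lra\cM_0,
\]
indexed by partitions $\eta$ of $d$, with the numerical factor $\mathbf{m}(\eta)/|\Aut(\eta)|$ arising exactly as in Li's formula from the automorphisms at the nodes over $E$ and the symmetries of $\eta$, and from the ways the Euler characteristics $\chi_1,\chi_2$ glue to $\chi$. The new input is that $\Phi_\eta^\ast(\sigma|_{\cM_0})=\sigma_1\boxplus 0$, so that the kernel cone of the pulled-back cosection is the product of the kernel cone of $\sigma_1$ on the $Y_1$-factor with the full obstruction bundle on the $Y_2$-factor; the factorization of the localized Gysin map used in the proof of Proposition~\ref{fundtwist} --- which expresses the localized Gysin homomorphism of such a product cone as the ordinary Gysin map in the $Y_2$-direction followed by the localized Gysin map in the $Y_1$-direction --- then identifies the $\eta$-summand of $[\cM_0]\virt\loc$ with the fibre product over $E^{\ell(\eta)}$ of the localized relative virtual cycle $[\mxn^{(Y_1,E)}(d,\eta)^\bullet]\virt\loc$ (which by Proposition~\ref{proprelY1} already lives in $H_\ast(E_0^{\ell})\subset H_\ast(E^{\ell})$) with the ordinary relative virtual cycle of $(Y_2,E)$ (valued in $\BM(E^{\ell})$ after pushing via $\tilde{ev}$). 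Integrating $\ev^\ast(\prod_j\gamma_j)\cdot\prod_j\psi_j^{\alpha_j}$ over this decomposition, splitting the insertions according to $n_1+n_2=n$, and converting the fibre product over $E^{\ell(\eta)}$ into the intersection pairing $\star$ on $H_\ast(E^{\ell(\eta)})\times\BM(E^{\ell(\eta)})$ produces the stated formula.

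The main obstacle I anticipate is showing that Li's decomposition of the central-fibre virtual cycle refines to a decomposition of the \emph{localized} virtual cycle --- that is, that the virtual normal cone attached to $\cM_0$ lies, irreducible component by irreducible component, in the kernel cone of $\sigma|_{\cM_0}$ and splits compatibly with the $\Phi_\eta$. Establishing this requires revisiting the Kuranishi-chart description of the predeformable perfect obstruction theory, in the spirit of Lemma~\ref{ker-cone} and its Sublemma, in the relative/degenerate setting, and checking that the gluing and smoothing parameters at the nodes over $E$ interact correctly with the cosection. The remaining ingredients --- properness of $Z(\sigma)$ over $\Ao$, the lift of the cosection, deformation invariance, and the Gysin-map bookkeeping --- should be a fairly mechanical transcription of arguments already present in Sections~2--3 and in \cite{Li2}.
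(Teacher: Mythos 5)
The first thing to note is that the paper itself contains no proof of this theorem: it is stated with the citation to \cite{KL2} (``in preparation''), and the authors say explicitly that they will ``only state the degeneration formula'' here, deferring both Proposition \ref{proprelY1} and the formula itself to that separate paper. So there is no in-paper argument to measure your proposal against; all one can assess is whether your outline is the right program and whether it actually closes.

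Your strategy --- extend $\Theta$ and its cosection over the total space of the degeneration, verify properness of the degeneracy locus and the lift to the absolute obstruction sheaf, invoke deformation invariance, then push the cosection through Li's decomposition of the central fibre --- is the natural one and almost certainly the intended one. But as written it is a program rather than a proof, and the step you yourself flag as the ``main obstacle'' is not a technical footnote: it is essentially the entire content of the theorem. Two concrete points. First, Proposition \ref{constancy} does not apply verbatim: there $\cM\to T$ carries a perfect relative obstruction theory over a smooth family, whereas the degeneration stack of \cite{Li2} has central fibre built from expanded degenerations, carries the predeformable obstruction theory, and the rational equivalence connecting $[\cM_t]\virt$ to the sum over $\eta$ of fibre products is itself a substantial construction; one must show that this rational equivalence is supported in the kernel cone of the cosection before the localized Gysin map of Definition-Proposition \ref{2.5} can be applied --- the analogue of \eqref{rat}, which in turn requires redoing Lemma \ref{ker-cone} and its Sublemma for Kuranishi models of predeformable maps, including the smoothing parameters at the distinguished nodes. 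Second, your identification of $Z(\sigma)$ with the moduli of relative stable maps to $\cD/\Ao$ is not right as stated: since $\Theta$ vanishes identically on $Y_2$, the degeneracy locus over $t=0$ imposes no condition on the $Y_2$-parts of a stable map and is strictly larger than the locus of maps into the zero section. Properness over $\Ao$ still holds, but the correct argument is that the $Y_1$-parts are forced into $D_1$, hence the contact points into $E_0$, and the special evaluation map for $(Y_2,E)$ is proper. With these caveats the outline is sound but incomplete: the decisive compatibility of the cosection with the degeneration cycle is asserted, not proved.
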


Here the summation is taken over all integers $\chi_1$ and $\chi_2$,
partitions $\eta$ of $d$ subject to the constraint
\begin{equation}\lab{1.6}
\chi_1 +\chi_2-l(\eta )=\chi.
\end{equation}
The $\Aut(\eta)$ is the subgroup of permutations in $S_{\ell(\eta)}$
that fixes $\eta$.

\subsection{Low degree GW-invariants with descendants}

For a smooth projective surface $S$ with $p_g>0$, in the previous
section we have computed its GW-invariants \emph{without} descendant
insertions; in this section we shall compute its degrees one and two
GW-invariants with \emph{descendants}.

{According to Conjecture \ref{reduct}, we only need consider} the
localized GW-invariants
\begin{equation}\lab{des}
\langle\prod_{i=1}^n\tau_{\aa_i}(\gamma)\rangle^{X,\bullet}_{\chi,d[D],\mathrm{loc}}
=\langle \tau_{\aa_1}(\gamma_1)\cdots \tau_{\aa_n}(\gamma_n)\rangle
^{X,\bullet}_{\chi,d[D],\mathrm{loc}},\quad \text{for }\gamma_i\in
H^*(X),
\end{equation}
of the total space $X$ of a theta characteristic $L$ over a smooth
projective curve $D$ of genus $h:=K_S^2+1$ and $h^0(L)\equiv
\chi(\cO_S)$ mod $2$. Following the convention, since (\ref{des}) is
possibly non-trivial only when
\begin{equation} \lab{last0}
-\chi=d\,K_S^2+\sum_{i=1}^n\aa_i,\quad \aa_i\in \ZZ_{\ge 0},
\end{equation}
we shall omit the reference to $\chi$ in the notation of (\ref{des})
with the understanding that it is given by (\ref{last0}).

Let $\gamma\in H^2(D,\ZZ)$ be the Poincar\'e dual of a point in $D$.
The main result of this section is the following theorem,
conjectured by Maulik and Pandharipande \cite[(8)-(9)]{MaulikPand}

\begin{theo}\lab{MPformthm} Let $X\to D$ and $h=g(D)$ be as before. Then the degree
one and two GW-invariants with descendants are
\begin{equation}\lab{last1} \langle
\prod_{i=1}^n\tau_{\aa_i}(\gamma)\rangle^{X,\bullet}_{[D],\mathrm{loc}}=
(-1)^{h^0(L)}\prod_{i=1}^n\frac{\aa_i!}{(2\aa_i+1)!}(-2)^{-\aa_i};
\end{equation}
\begin{equation}\lab{last2} \langle
\prod_{i=1}^n\tau_{\aa_i}(\gamma)\rangle^{X,\bullet}_{2[D],\mathrm{loc}}
= (-1)^{h^0(L)}\,
2^{h+n-1}\prod_{i=1}^n\frac{\aa_i!}{(2\aa_i+1)!}(-2)^{\aa_i}.
\end{equation}
\end{theo}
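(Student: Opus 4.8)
The plan is to combine the twisted-cycle formula of Proposition~\ref{fundtwist} (and Corollary~\ref{twist}) with the degeneration formula of \cite{KL2} stated above, so as to rewrite each localized descendant invariant of $X$ in degrees $1$ and $2$ as an integral over a moduli of stable maps to the curve $D$, capped with the Euler class of a Hodge-type bundle coming from the contracted components of the domain; these integrals are then evaluated by standard Hodge-integral identities together with the classical fact that the signed count $\sum_M(-1)^{h^0(M)}$ of theta characteristics $M$ of $D$ equals $2^{h}$. Degree $1$ is treated directly; degree $2$ is treated after applying the degeneration.

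\emph{Degree one.} The $\theta$-null locus in $\cM^\bullet_{\chi,n}(X,[D])$ (with $\chi=1-h-\sum_i\alpha_i$ as forced by the dimension constraint) is the moduli $\cM^\bullet_{\chi,n}(D,1)$ of degree-one stable maps to $D$; since $h=g(D)\ge 2$, the degree-one component of each domain is rigidly $D$, and the contracted subcurves carry total genus $\sum_i\alpha_i$. On this moduli both $\pi_*f^*L=H^0(D,L)$ and $R^1\pi_*f^*L\cong H^1(D,L)\oplus(\text{Hodge bundle of the contracted locus})$ are locally free, of ranks $h^0(L)$ and $h^0(L)+\sum_i\alpha_i$ (using $h^1(L)=h^0(L)$), so Proposition~\ref{fundtwist} applies; summing over connected components, on each of which $h^0(f^*L)=h^0(L)$, it gives
\[
[\cM^\bullet_{\chi,n}(X,[D])]\virt\loc=(-1)^{h^0(L)}\,[\cM^\bullet_{\chi,n}(D,1)]\virt\cap e(V),\qquad \rank V=\textstyle\sum_i\alpha_i .
\]
Pairing against $\prod_i\psi_i^{\alpha_i}\,\mathrm{ev}_i^*\gamma$ and imposing the point conditions pins the $i$-th contracted tree over a fixed point of $D$, so the integral factors over the marked points into one-point Hodge integrals of the form $\int_{\overline{M}_{g,1}}\lambda_g\psi_1^{\,j}$ (with the trivial contribution when $\alpha_i=0$). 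A standard Hodge-integral identity identifies their generating series, whose coefficients are precisely $\tfrac{\alpha_i!}{(2\alpha_i+1)!}(-2)^{-\alpha_i}$; this proves \eqref{last1}, which specializes to $\langle 1\rangle^{X,\mathrm{loc}}_{1}=(-1)^{h^0(L)}$ at $n=0$.

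\emph{Degree two.} Apply the degeneration formula of \cite{KL2} to $\cX_0=Y_1\cup_E Y_2$ with $Y_1\cong X$ and $Y_2=\Ao\times\PP^1$; since $\gamma$ is the point class of $D$ we may take the blown-up point off its support and place all descendants on $Y_1$. Then $\langle\prod_i\tau_{\alpha_i}(\gamma)\rangle^{X,\bullet}_{2[D],\mathrm{loc}}$ becomes a finite sum, over $\chi_1+\chi_2-\ell(\eta)=\chi$ and over partitions $\eta$ of $2$ (so $\eta=(2)$ or $(1,1)$), of $\tfrac{\mathbf{m}(\eta)}{|\Aut(\eta)|}$ times a localized relative invariant of $(Y_1,E)=(X,E)$ carrying the descendants, paired ($\star$) with an ordinary relative invariant of $(Y_2,E)$. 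The $(Y_2,E)$-factor is the relative Gromov--Witten theory of $(\PP^1,\mathrm{pt})$ with no descendant insertions (the extra $\Ao$ being inert, contributing only the target $E^{\ell}$ of the special evaluation), which is classical. For the $(Y_1,E)$-factor one runs the degree-one argument relatively: by Proposition~\ref{proprelY1} the localized relative cycle is supported on relative maps into the zero section $D$, and --- away from the locus where $R^1\pi_*f^*L$ drops rank --- a relative version of Proposition~\ref{fundtwist} identifies it with a twisted relative descendant invariant of $D$ in degree $2$. The relevant degree-two maps are the two-sheeted covers, indexed by $\epsilon\in\mathrm{Pic}^0(D)[2]$ (with $\epsilon=0$ giving the disconnected cover $D\sqcup D$); summing $(-1)^{h^0(u_\epsilon^*L)}=(-1)^{h^0(L)+h^0(L\otimes\epsilon)}$ over all $\epsilon$, dividing by $|\Aut|=2$, and invoking $\sum_M(-1)^{h^0(M)}=2^{h}$ produces the leading factor $(-1)^{h^0(L)}2^{h-1}$; the contracted-component Hodge integrals contribute $\prod_i\tfrac{\alpha_i!}{(2\alpha_i+1)!}(-2)^{\alpha_i}$ (the change of sign from the degree-one case reflecting the doubling of the cover), and the residual powers of $2$ --- from the marked points being distributed among the two sheets, from the weights $\mathbf{m}(\eta)/|\Aut(\eta)|$, and from the $\star$-pairing against the $(\PP^1,\mathrm{pt})$-invariants --- assemble to $2^{n}$. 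Altogether this gives \eqref{last2}, and at $n=0$ reduces to $\langle 1\rangle^{X,\mathrm{loc}}_{2}=(-1)^{h^0(L)}2^{h-1}$, consistent with the \'etale-cover formula.

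\emph{The main obstacle.} The crux is the degree-two relative computation for $(X,E)$. Since $R^1\pi_*f^*L$ fails to be locally free over the whole relative moduli, the clean form of Proposition~\ref{fundtwist} must be replaced by a stratified argument controlling the strata where its rank jumps; the moduli of degree-two relative stable maps to $D$ carries several boundary components (nodal double covers, covers degenerating against $E$, and the disconnected piece), each of which must be shown to contribute compatibly with the $2$-torsion bookkeeping; and extracting exactly the power $2^{h+n-1}$ requires matching the combinatorial weights $\mathbf{m}(\eta)/|\Aut(\eta)|$ and the gluing multiplicities in the degeneration formula against the automorphisms of the covers, together with the precise one-point values of the Hodge integrals and of the $(\PP^1,\mathrm{pt})$ relative invariants. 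It is this bookkeeping, rather than any single conceptual point, that carries the weight of the proof.
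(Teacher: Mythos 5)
Your degree-one argument is essentially the paper's: apply Corollary \ref{twist} (the sheaf $R^1\pi_*f^*L$ is locally free in degree one), reduce to a twisted Gromov--Witten invariant of $D$ with sign $(-1)^{h^0(L)}$, and evaluate by the Hodge-integral identities of Faber--Pandharipande. That part is fine.

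In degree two there is a genuine gap, and it sits exactly where you park it under ``the main obstacle.'' You distribute all the descendant insertions onto the $(Y_1,E)=(X,E)$ side of the degeneration, which forces you to compute localized \emph{relative descendant} invariants of $(X,E)$ in degree two; since $R^1\pi_*f^*L$ is not locally free on the branched-double-cover component, Proposition \ref{fundtwist} does not apply there, and the ``stratified argument controlling the strata where its rank jumps'' that you invoke is precisely the content that has to be supplied --- it is not bookkeeping. The paper's route is designed to avoid ever needing this in the relative descendant setting: all descendants $\tau_{\alpha_i}(\gamma)$ are pushed onto the $Y_2=\Ao\times\PP^1$ side (where the relative theory is that of $(\PP^1,\mathrm{pt})$ and reduces to the known $h=0$ total space $\cO_{\PP^1}(-1)$), so the only unknowns from $Y_1$ are the two insertion-free relative invariants $\langle 1\rangle^{(Y_1,E),\bullet}_{(1,1),\mathrm{loc}}$ and $\langle 1\rangle^{(Y_1,E),\bullet}_{(2),\mathrm{loc}}$. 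The first is computed by the \'etale-cover/parity count; the second is \emph{solved for} from the single absolute invariant $\langle\tau_1(\gamma)\rangle^{X,\bullet}_{2[D],\mathrm{loc}}=(-1)^{h^0(L)}(2^h/(-3))$ of Lemma \ref{lem3}, by comparing the degeneration identity for general $h$ with the $h=0$ case. That one absolute number is where the non-locally-free locus must be confronted, and the paper does so in Section 4.3 by specializing $D$ to a hyperelliptic curve, cutting by $\psi_1\,\mathrm{ev}_1^*(\gamma)$, determining the local model $zw_1=z^3w_2=\cdots=z^{2r+1}w_{r+1}=0$ of the moduli near the exceptional points via a Hankel-determinant computation, and summing the resulting excess contributions. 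Your proposal contains no substitute for this computation, so the value $2^{h+n-1}\prod_i\frac{\alpha_i!}{(2\alpha_i+1)!}(-2)^{\alpha_i}$ --- in particular the constant $2^{h+n-1}$ and the sign flip $(-2)^{-\alpha_i}\mapsto(-2)^{\alpha_i}$, which you attribute without proof to ``the doubling of the cover'' --- is not actually derived.
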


The first identity will follow directly from Corollary \ref{twist}
and the second will be proved using degeneration.

We begin with the degree one case. Let $-\chi=K_S^2+\sum\alpha_i$,
$$\cM=\mcn(X,[D])\bul\and \cN=\cM_{\chi,n}(D,[D])\bul ;
$$
let $f\mh\cC\to D$ with $\pi\mh\cC\to \cN$ be the universal family.
Because maps in $\cN$ has degree one over $D$, a direct application
of the base change property shows that $\pi_*f^*L\cong
H^0(L)\otimes\cO_{\cN}$ and that $R^1\pi_*f^*L$ is locally free. By
Corollary \ref{twist},
\[
\langle
\prod_{i=1}^n\tau_{\aa_i}(\gamma)\rangle^{X,\bullet}_{[D],\mathrm{loc}}=
(-1)^{h^0(L)}\int_{[\cN]\virt} \prod
\psi_i^{\aa_i}ev_i^*(\gamma)\cap c_{top}(-\pi_!f^*L),
\]
is a twisted GW-invariant of $D$ with sign $(-1)^{h^0(L)}$. From
\cite{FaberPand-HodgeInt}, we can readily evaluate them and obtain
\eqref{last1}. We omit the straightforward computation here.

\bigskip

We next prove \eqref{last2}. We begin with a few special cases.

\begin{lemm}[$h=0$ case]
Let $Y_0$ be the total space of $\cO_{\PP^1}(-1)$. Then
\begin{equation}\lab{last6} \langle
\prod_{i=1}^n\tau_{\aa_i}(\gamma)\rangle^{Y_0,\bullet}_{2[\Po],\mathrm{loc}}
= 2^{n-1}\prod_{i=1}^n\frac{\aa_i!}{(2\aa_i+1)!}(-2)^{\aa_i}.
\end{equation}
\end{lemm}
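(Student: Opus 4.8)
The plan is to reduce the statement to a twisted Gromov--Witten computation on the moduli of degree two stable maps to $\Po$, exploiting that $h=0$ is precisely the range in which the relevant direct image sheaf is locally free. Write $L=\cO_{\Po}(-1)$, so $L^{\otimes 2}=K_{\Po}$ and $Y_0$ is the total space of the theta characteristic $L$, its standard holomorphic two-form degenerating exactly along the zero section $\Po\subset Y_0$, which is proper; hence the localized invariants are defined. Let $\cN=\cM^\bullet_{\chi,n}(\Po,2)$ with universal family $f\mh\cC\to\Po$, $\pi\mh\cC\to\cN$, and let $\bar p$ be the induced map from $\cM^\bullet_{\chi,n}(Y_0,2[\Po])$ to $\cN$ (well defined since the fibres of $Y_0\to\Po$ are affine). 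On $\cN$ every connected component of a domain curve maps non-constantly, hence with positive total degree, so $f^*L$ has negative degree on every non-contracted irreducible component; this forces $H^0(C,f^*L)=0$ for every domain $C$, so $\pi_*f^*L=0$, and then Riemann--Roch gives $h^1(C,f^*L)=2-\chi$, constant, so $R^1\pi_*f^*L$ is locally free. Thus the hypotheses of Corollary \ref{twist} (in the $\bullet$ version, valid by the opening remark of \S\ref{section4.1}) are met with $h^0(f^*L)=0$, giving
\[
\langle\prod_{i=1}^n\tau_{\aa_i}(\gamma)\rangle^{Y_0,\bullet}_{2[\Po],\mathrm{loc}}
=\int_{[\cN]\virt}\prod_{i=1}^n\psi_i^{\aa_i}\,ev_i^*(\gamma)\cap e\bl R^1\pi_*f^*L\br,
\]
a degree two twisted descendant invariant of $\Po$, where $\gamma$ is the point class pulled back from $\Po$.

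The next step is to evaluate this invariant. Up to the standard passage between $\bullet$ and connected invariants, $\cN$ contributes through the locus of connected degree two maps and through loci of disjoint unions of two connected degree one maps with the markings distributed as $\{1,\dots,n\}=A\sqcup B$. On a degree $(1,1)$ locus $R^1\pi_*f^*L$ splits as the external sum of the two degree one sheaves, so $[\cN]\virt$, the $\psi$-classes, the evaluations and the Euler class all factor, and the contribution becomes a product of two degree one twisted invariants. Each such factor is, again by Corollary \ref{twist}, the localized degree one invariant $\langle\prod_{i\in A}\tau_{\aa_i}(\gamma)\rangle^{Y_0,\bullet}_{[\Po],\mathrm{loc}}$, which is already known from \eqref{last1} (with $h^0(L)=0$) to be $\prod_{i\in A}\frac{\aa_i!}{(2\aa_i+1)!}(-2)^{-\aa_i}$. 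For the connected degree two locus I would apply the Graber--Pandharipande virtual localization \cite{GraberPand} for the standard $\CC^*$-action on $\Po$, linearizing $L$ and representing $\gamma$ by a fixed point so that the markings carrying it are pinned there; equivalently one may argue directly on the admissible cover description of degree two covers of $\Po$ (a double cover of genus $g$ has $2g+2$ branch points). Either way the fixed-point contributions reduce to Hodge integrals on $\Mbar_{g,m}$ and products of such, evaluated using the $\lambda_g$ formula and the Hodge integral computations of \cite{FaberPand-HodgeInt} already used in the degree one case. An alternative is to feed $Y_0$ itself into the degeneration formula of \S\ref{section4.1}, placing all descendants on the $\Po\times\CC$ component and computing the residual no-insertion localized relative invariant of $(Y_0,E)$ directly.

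Finally I would organize the connected and disconnected contributions into generating series in variables dual to $\aa_1,\dots,\aa_n$: the $\bullet$/connected exponential relation turns the disconnected sum into a product of the degree one series, so what remains is to show that, after adding the connected degree two series, the total is $2^{n-1}\prod_{i=1}^n\frac{\aa_i!}{(2\aa_i+1)!}(-2)^{\aa_i}$, matching the $h=0$ specialization of \eqref{last2}. The main obstacle is this last evaluation: carrying out the degree two localization (or admissible cover) computation of the twisted descendant series and recognizing the resulting combination of Hodge integrals in the asserted closed form, while keeping the signs and the combinatorial $2^{n-1}$-type factors consistent through the connected/disconnected bookkeeping and the Serre-duality identifications used in Corollary \ref{twist}.
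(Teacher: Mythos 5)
Your reduction step is exactly the paper's: since $L=\cO_{\Po}(-1)$ has negative degree on every non-contracted component and every connected component of the domain maps non-constantly, $\pi_*f^*L=0$ and $R^1\pi_*f^*L$ is locally free, so the moduli space of stable maps to $Y_0$ is proper (it coincides with $\cN$) and Corollary \ref{twist} with $h^0(f^*L)=0$ turns the localized invariant into the twisted descendant invariant $\int_{[\cN]\virt}\prod\psi_i^{\aa_i}ev_i^*(\gamma)\cap e(R^1\pi_*f^*L)$. This part is correct and, if anything, more carefully justified than the paper's one-line appeal to the rigidity of $\Po\subset\cO_{\Po}(-1)$.

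The gap is in the second half: the closed form $2^{n-1}\prod_i\frac{\aa_i!}{(2\aa_i+1)!}(-2)^{\aa_i}$ is the actual content of \eqref{last6}, and you do not produce it --- you only list possible strategies (virtual localization, admissible covers, degeneration) and yourself identify the evaluation as ``the main obstacle.'' The paper closes this by invoking the differential equations governing the $\cO(-1)$-twisted descendant theory of target curves in \cite{FaberPand-HodgeInt}, the same source already used to get the degree-one formula \eqref{last1}; that is a genuinely shorter route than re-deriving the degree-two series from Hodge integrals via \cite{GraberPand}, though your localization plan would in principle work. Two cautions on your alternatives: (i) the connected/disconnected bookkeeping you sketch for the $(1,1)$ loci is fine, but it does not by itself pin down the connected degree-two series, which is where all the work lies; (ii) the suggestion to feed $Y_0$ into the degeneration formula of \S\ref{section4.1} is dangerously close to circular, since the paper's degeneration argument for general $h$ \emph{uses} the $h=0$ lemma as the anchor that determines the unknown relative invariants $\langle 1\rangle^{(Y_1,E),\bullet}_{(2),\mathrm{loc}}$; to make that route honest you would have to compute those relative invariants of $(Y_0,E)$ independently. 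As written, the proposal establishes the correct reduction but leaves the formula itself unproved.
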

\begin{proof}
Since $\PP^1\subset \cO_{\PP^1}(-1)$ is rigid, the moduli space of
stable maps to $\cO_{\PP^1}(-1)$ is proper and the localized
GW-invariant coincides with the twisted GW-invariant of $\PP^1$ by
Corollary \ref{twist}. Hence, \eqref{last6} follows from the
differential equations for the twisted invariants in
\cite{FaberPand-HodgeInt}.
\end{proof}

\begin{lemm} \lab{lem3} We have
\begin{equation}\lab{last3} \langle
\tau_1(\gamma)\rangle^{X,\bullet}_{2[D],\mathrm{loc}}
=(-1)^{h^0(L)}\left( \frac{2^{h}}{-3}\right)
\end{equation}
\end{lemm}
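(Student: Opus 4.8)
The plan is to evaluate $\langle\tau_1(\gamma)\rangle^{X,\bullet}_{2[D],\mathrm{loc}}$ by the degeneration formula, and to organize the answer as a recursion in the genus $h$ of $D$ whose base case is the Lemma on $Y_0=\cO_{\PP^1}(-1)$ just established. First, by \eqref{last0} the Euler characteristic is fixed, $-\chi=2(h-1)+1=2h-1$; since $K_X=p^*L$ one finds that $\cM=\cM_{\chi,1}(X,2[D])^\bullet$ has virtual dimension $2$, so $\int_{[\cM]\virt\loc}\psi_1\cdot ev_1^*(\gamma)$ is a rational number, and by the $\theta$-null description of $Z(\sigma)$ this localized class is supported on stable maps into the zero section $D\subset X$. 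I would then degenerate $X$ as in \S\ref{section4.1}: blowing up $X\times\Ao$ along $E\times 0$, with $E$ the fibre of $X$ over a point $q\in D$, gives $\cX_0=Y_1\cup_E Y_2$, $Y_1=X$, $Y_2=\Ao\times\Po$. Since $\gamma$ is pulled back from $D$ the marked point goes on the $Y_1$-side, and the degeneration formula expresses $\langle\tau_1(\gamma)\rangle^{X,\bullet}_{2[D],\mathrm{loc}}$ as a finite sum over partitions $\eta\vdash 2$ and splittings $\chi_1+\chi_2-\ell(\eta)=\chi$ of
\[
\frac{\mathbf{m}(\eta)}{|\Aut(\eta)|}\,\langle\tau_1(\gamma)\rangle^{(Y_1,E),\bullet}_{\chi_1,\eta,\mathrm{loc}}\,\star\,\langle 1\rangle^{(Y_2,E),\bullet}_{\chi_2,\eta}.
\]

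The $(Y_2,E)=(\Ao\times\Po,\Ao\times 0)$ factors carry no insertions and are elementary; through the trivial $\Ao$-factor they reduce to relative stable-map counts for $\Po$, which I would simply tabulate for the partitions of $0,1,2$. On the $Y_1$-side the substantial inputs are the localized relative invariants of $(X,E)$ in degrees $\le 2$ into the zero section, relative to $q$. In degrees $0$ and $1$ the sheaves $\pi_*f^*L$ and $R^1\pi_*f^*L$ are trivial, respectively locally free (the arithmetic genus of the domain is constant on the moduli and $h^0(f^*L)=h^0(L)$ does not jump), so the relative analogue of Corollary \ref{twist} applies and turns these into twisted GW-invariants of $D$, hence into Hodge integrals of the kind evaluated in \cite{FaberPand-HodgeInt}; this is where the power of $2$ first appears, and the parity sign $(-1)^{h^0(L)}$ enters exactly as in Propositions \ref{fundtwist} and \ref{proprelY1}. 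The remaining quantity is the degree-$2$ localized relative invariant of $(X,E)$; since $R^1\pi_*f^*L$ jumps in degree $2$, Corollary \ref{twist} no longer applies, and instead I would degenerate the base curve $D$ itself, splitting off a genus-one tail $(E_1,\tau)$, so that a further application of the degeneration formula expresses the genus-$h$ relative invariant through the genus-$(h-1)$ one times a genus-one ``building block.'' Putting everything together, the degeneration formula yields a recursion showing that the parity-normalized quantity $(-1)^{h^0(L)}\langle\tau_1(\gamma)\rangle^{X,\bullet}_{2[D],\mathrm{loc}}$ depends only on $h$ and is multiplied by $2$ each time $h$ increases by one; its value at $h=0$ is supplied by the $h=0$ Lemma, since $\cO_{\PP^1}(-1)$ is the theta characteristic of $\PP^1$ and \eqref{last6} with $n=1,\aa_1=1$ gives $2^{0}\cdot\frac{1!}{3!}(-2)^1=-\frac13$. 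Hence the quantity equals $2^h/(-3)$, i.e.\ $\langle\tau_1(\gamma)\rangle^{X,\bullet}_{2[D],\mathrm{loc}}=(-1)^{h^0(L)}2^h/(-3)$, which is \eqref{last3}.

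The main obstacle is the degree-$2$ localized relative invariant of $(X,E)$. Because $R^1\pi_*f^*L$ is not locally free there --- it jumps over the locus of double covers for which $f^*L$ becomes effective --- one cannot read it off Corollary \ref{twist}, so the genus-reducing degeneration has to be set up and analysed directly: one must identify the strata of degree-$2$ relative stable maps into the zero section (connected double covers versus two disjoint sections, with the behaviour over $q$ dictated by $\eta$) and, most delicately, verify that the genus-one building block combines with the mod-$2$ change of $h^0$ of the theta characteristic across the degeneration to give the universal factor $2$ --- independently of which theta characteristic $\tau$ is split off --- since this is precisely what forces the clean formula $2^h/(-3)$ with the parity sign $(-1)^{h^0(L)}$. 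Once this mod-$2$ index bookkeeping is settled, the remaining steps, namely the Hodge-integral evaluations of \cite{FaberPand-HodgeInt} and the elementary relative $\Po$-counts on the $Y_2$-side, are routine.
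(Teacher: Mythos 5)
Your strategy inverts the logical order of the paper and, in doing so, concentrates the entire difficulty into the one step you do not carry out. In the paper, the degeneration formula \eqref{last5} is \emph{not} used to prove \eqref{last3}; rather, \eqref{last3} is established first by an independent computation and is then fed into \eqref{last5} to \emph{solve for} the otherwise inaccessible relative invariant $\langle 1\rangle^{(Y_1,E),\bullet}_{(2),\mathrm{loc}}$. If you instead try to derive \eqref{last3} from the degeneration formula, you must supply the degree-two localized relative invariant of $(Y_1,E)$ by some other means, and this is exactly where $R^1\pi_*f^*L$ jumps, so Corollary \ref{twist} is unavailable. Your proposed remedy --- a second degeneration splitting a genus-one tail off the base curve $D$, giving a recursion in $h$ --- is nowhere set up in the paper and faces concrete obstructions you do not address: the degeneration formula for \emph{localized} invariants is only established (in the cited \cite{KL2}) for the degeneration to the normal cone of a fiber $E$, not for a degeneration of the base curve; a theta characteristic does not restrict to a theta characteristic on the components of the nodal curve obtained by splitting off a genus-one tail (the relevant degrees are odd on each component), so one needs the theory of spin curves and the parity behavior of $h^0$ across such degenerations; and the ``universal factor $2$'' per unit of genus is precisely the content of the lemma, whose verification you explicitly defer (``once this mod-$2$ index bookkeeping is settled''). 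As written, the proposal reduces the lemma to an unproved statement that is at least as hard.

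For contrast, the paper proves \eqref{last3} by a direct analysis of $\cM_{\chi,1}(X,2[D])^\bullet$ with $-\chi=2h-1$: the base moduli space $\cM_{\chi,1}(D,2[D])^\bullet$ has $2^{2h}$ components lying over \'etale double covers, whose contributions are $(-1)^{h^0(u^*L)}(-1/12)$ by the degree-one formula and sum to $(-1)^{h^0(L)}(-2^{h}/12)$ by the classical count of even and odd $u^*L$; the remaining component $\cN_0$ of branched double covers is exactly where $R^1\pi_*f^*L$ fails to be locally free, and its contribution (Lemma \ref{contri-0}) is extracted by taking $D$ hyperelliptic, cutting down by $\tau_1(\gamma)$, determining the local monomial structure $zw_1=z^3w_2=\cdots=0$ of the moduli space along the jumping locus, and summing the weighted contributions of the components of the virtual normal cone against the twisted invariant \eqref{last10}. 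None of this local analysis appears in, or is replaced by, your argument.
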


We postpone the proof of this lemma until later.

\begin{lemm}\lab{last4}
Let $(Y_1,E)$ and $(Y_2,E)$ be the relative pairs resulting from the
degeneration constructed in the previous subsection. Then
\[
\langle 1
\rangle^{(Y_1,E),\bullet}_{(1,1),\mathrm{loc}}=(-1)^{h^0(L)}\,
2^{h}\, [pt^2] \and \langle \tau_1(\gamma) \rangle^{(
Y_2,E),\bullet}_{(1,1)}\star [pt^2]= -1/6.
\]
\end{lemm}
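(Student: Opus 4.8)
The plan is to compute each of the two relative invariants by hand, using the explicit geometry of the pairs $(Y_1,E)$ and $(Y_2,E)$ together with the tools already assembled, and then to verify the stated numerical identity. I begin with the first invariant $\langle 1\rangle^{(Y_1,E),\bullet}_{(1,1),\mathrm{loc}}$. Here $Y_1\cong X$ is the total space of the theta characteristic $L$ over $D$, $E$ is a fixed fiber, and the partition is $\eta=(1)$ with Euler characteristic $\chi_1=1$; the domain curves are therefore smooth rational with one contact point of multiplicity one along $E$. The relative stable maps of degree one are essentially sections of $X\to D$ through the chosen point of $D$, i.e. the moduli of relative stable maps maps isomorphically (after the usual étale-gerbe considerations) onto a space closely related to $D$ itself together with a choice of a point of $L$ over the marked fiber; in fact, since the localized virtual class is supported where the image lies in the zero section $D_1\subset Y_1$, the relevant locus is just the single relative stable map given by the zero section meeting $E$ at its origin, possibly after adding expanded degenerations. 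First I would identify this moduli space and its localized obstruction theory, apply the cosection $\sigma_1$ of Proposition \ref{proprelY1}, and then reduce, exactly as in Corollary \ref{twist}, to a twisted integral over $\cM_{(1,1),n=0}(D,[D])^{\bullet,(D,q)}$. Since $\pi_*f^*L$ is a trivial bundle of rank $h^0(L)$ on this space (by base change, as $f$ has degree one), the sign $(-1)^{h^0(L)}$ factors out, and the remaining integral is the Euler class of $R^1\pi_*f^*L$ against the virtual class of the space of degree-one relative maps; a direct Riemann–Roch / base-change computation gives the rank-$h$ bundle whose top Chern class evaluates to $2^h$ times the point class $[pt^2]\in H_*(E^2)$ (the two points recording the image in $E$ of the contact point and the constraint that forces the origin of $E$). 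I expect this to be the routine half.

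For the second invariant I would use that $\Theta$ restricts to zero on $Y_2=\mathbb{A}^1\times\mathbb{P}^1$, so this is an \emph{ordinary} relative GW-invariant of the pair $(\mathbb{A}^1\times\mathbb{P}^1, E)$ with $E=\mathbb{A}^1\times\{0\}$, in degree $d=1$ over the $\mathbb{P}^1$-factor with contact order one along $E$ and one descendant insertion $\tau_1(\gamma)$. The $\mathbb{A}^1$-factor is a trivial line-bundle direction, so this invariant is (up to the $\mathbb{A}^1$-factor, which is absorbed by the $\star$-pairing with $[pt^2]$ and the properness of $\tilde{ev}$) a genus-zero one-pointed relative descendant invariant of $(\mathbb{P}^1,pt)$. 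I would compute $\langle\tau_1(\gamma)\rangle^{(Y_2,E),\bullet}_{(1,1)}$ either by the relative-to-absolute degeneration/rubber calculus, by localization on $\mathbb{P}^1$, or simply by dimension count plus the string/dilaton–type relations: the moduli space of degree-one maps $\mathbb{P}^1\to\mathbb{P}^1$ with a single marked point of full contact at $pt$ is a point (after rigidifying), the descendant $\psi_1$ pulls back the cotangent class at the marked point, and $\gamma$ imposes the point constraint $ev_1^*(pt)$; a standard computation of $\int\psi_1$ over this $1$-dimensional rubber-type moduli space, paired against $[pt^2]$, yields $-1/6$. I would then assemble the two pieces and confirm the identity verbatim.

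The main obstacle I anticipate is \emph{not} the numerology but making the bookkeeping of the localized relative obstruction theory on $(Y_1,E)$ rigorous: one must check that the cosection $\sigma_1$ of Proposition \ref{proprelY1} degenerates exactly along the zero section, that the localized virtual cycle really does land in $H_*(E_0^\ell)$ and not merely in $H_*(E^\ell)$, and that the analogue of Corollary \ref{twist} holds in the relative setting — i.e. that over the locus where $R^1\pi_*f^*L$ is locally free the localized cycle is $(-1)^{h^0(f^*L)}$ times the twisted virtual class, now with the relative boundary and the expanded target taken into account. Since Proposition \ref{proprelY1} is only stated here (its proof deferred to \cite{KL2}), I would cite it for the existence and support of the localized cycle and then run the argument of Proposition \ref{fundtwist} in the relative context; the subtlety is that the target may expand along $E$, so I must verify that the cosection stays surjective on the expanded components (it does, since those components map to $E\times\mathbb{P}^1$-type pieces on which the two-form vanishes identically, so the contribution localizes to the unexpanded stratum) and that no extra boundary contributions arise. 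Once that structural point is settled, both computations reduce to Hodge-integral evaluations in the style of \cite{FaberPand-HodgeInt}, which are routine.
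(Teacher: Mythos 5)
There is a genuine gap, and it starts with a misreading of the notation. The subscript $(1,1)$ in $\langle 1\rangle^{(Y_1,E),\bullet}_{(1,1),\mathrm{loc}}$ is the partition $\eta=(1,1)$ of $d=2$ (compare the degeneration formula \eqref{last5}, where the two terms carry the partitions $(1,1)$ and $(2)$ of $2$, and the statement ``we omit the explicit reference to $d$ \ldots because $\eta$ is a partition of $d$''). You instead treat both invariants as degree-one data with a single contact point, so both of your computations are set up on the wrong moduli spaces. For $(Y_1,E)$ the contributing maps are not sections of $X\to D$: a virtual dimension count forces $-\chi=2(h-1)$, so the maps are liftings of \emph{\'etale double covers} $u\mh C\to D$, the moduli space is a disjoint union of $2^{2h}$ vector spaces, and Proposition \ref{fundtwist} turns the invariant into the signed sum $\sum_u(-1)^{h^0(u^*L)}[pt^2]$. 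The coefficient $2^h$ is then the classical fact (Harris) that among the $2^{2h}$ two-torsion twists of $L$ exactly $2^{h-1}(2^h+1)$ preserve the parity of $h^0$, giving $2^{h-1}(2^h+1)-2^{h-1}(2^h-1)=2^h$. None of this — the identification of the contributing locus, the reduction via Proposition \ref{fundtwist}, or the theta-characteristic parity count — appears in your argument; your claim that ``the top Chern class evaluates to $2^h$ times the point class'' by Riemann--Roch is asserted, not derived, and is not where the $2^h$ comes from.

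The second half has the same problem. The invariant $\langle\tau_1(\gamma)\rangle^{(Y_2,E),\bullet}_{(1,1)}$ is a degree-two invariant with \emph{disconnected} domain: since $Y_2$ is the total space of the trivial line bundle over $\Po$ and the two contact points are distinct, every contributing map has exactly two connected components, each of degree one, one carrying the insertion and one not. The evaluation is then the product formula
$2\cdot\bl\langle\tau_1(\gamma)\rangle^{(Y_2,E)}_{(1)}\star[pt]\br\cdot\bl\langle 1\rangle^{(Y_2,E)}_{(1)}\star[pt]\br=2\cdot(-\tfrac1{12})\cdot 1=-\tfrac16$, where $-1/12$ is the degree-one descendant value from \eqref{last1} (equivalently the $h=0$ twisted computation). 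Your ``standard computation of $\int\psi_1$ over this $1$-dimensional rubber-type moduli space yields $-1/6$'' skips the disconnectedness entirely and does not identify the source of the $-1/12$ or the factor of $2$. The structural concerns you raise about Proposition \ref{proprelY1} are legitimate but are exactly the part the paper defers to \cite{KL2}; the part you were asked to supply — the two numerical evaluations — is missing its essential content.
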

\begin{proof}
We first look at the first identity. It is easy to see, from the
construction of localized relative invariants (Proposition
\ref{proprelY1}), that $\langle 1
\rangle^{(Y_1,E),\bullet}_{(1,1),\mathrm{loc}}$ is a scalar multiple
of $[pt^2]$. Thus, an easy virtual dimension counting shows that the
only stable maps that contribute to this invariant must have
$-\chi=2(h-1)$. By \eqref{last0}, the composites of these stable
maps with $p\mh X\to D$ are \'etale covers of $D$. Hence the
(relevant) moduli space of relative stable maps to $(Y_1,E)$ is a
disjoint union of $2^{2h}$ vector spaces, each consists of all
liftings of an \'etale cover of $D$. By our proof of Proposition
\ref{fundtwist}, we obtain
\[
\langle 1
\rangle^{(Y_1,E),\bullet}_{(1,1),\mathrm{loc}}=\sum_{u:2\text{-fold
\'etale covers of }D}(-1)^{h^0(u^*L)}\,[pt^2].
\]
It is known that \'etale double covers of $D$ are parameterized by
the set of order $2$ line bundles on $D$, and exactly
$2^{h-1}(2^{h}+1)$ of them satisfy $h^0(u^*L)\equiv h^0(L)$ mod $2$
\cite{Harris}. Therefore we have
\[
\langle 1 \rangle^{(Y_1,E),\bullet}_{(1,1),\mathrm{loc}}=
(-1)^{h^0(L)} \left( 2^{h-1}(2^{h}+1) -2^{h-1}(2^{h}-1)
\right)[pt^2] =(-1)^{h^0(L)} 2^{h}[pt^2].
\]
This proves the first equation.

Since $Y_2$ is the total space of the trivial line bundle over
$\PP^1$, any stable map in
$\cM_{\chi,n}^{Y_2,E}(2[D],(1,1))^\bullet$ with two distinct
intersection points with $E$ has two irreducible components, one
with the marked point and the other without. Therefore, because
$$
\langle \tau_1(\gamma) \rangle^{( Y_2, E)}_{(1)} \star [pt]= \langle
\tau_1(\gamma) \rangle^{Y_0,\bullet}_{[\PP^1],\mathrm{loc}}
=-\frac1{12} \and \langle 1 \rangle^{( Y_2, E)}_{(1)} \star [pt]=1,
$$
we have
\[
\langle \tau_1(\gamma) \rangle^{( Y_2, E),\bullet}_{(1,1)}\star
[pt^2] =\left( \bigl\langle \tau_1(\gamma)
\bigr\rangle^{(Y_2,E)}_{(1)} \star [pt] \right) \left( \langle
1\rangle^{(Y_2,E)}_{(1)} \star [pt] \right) +\qquad\qquad
\]
\[
\qquad\qquad\qquad\qquad\qquad\quad+\left( \langle
1\rangle^{(Y_2,E)}_{(1)} \star [pt] \right) \left( \langle
\tau_1(\gamma) \rangle^{(Y_2,E)}_{(1)} \star [pt] \right) = -\frac16
\]

\end{proof}

We next prove \eqref{last2}, assuming \eqref{last3}. By the
degeneration formula, we have
\begin{equation}\lab{last5} \langle
\prod_{i=1}^n\tau_{\aa_i}(\gamma)
\rangle^{X,\bullet}_{2[D],\mathrm{loc}} = \frac12\ \langle 1
\rangle^{(Y_1,E),\bullet}_{(1,1),\mathrm{loc}} \star \langle
\prod_{i=1}^n\tau_{\alpha_i}(\gamma) \rangle^{( Y_2,
E),\bullet}_{(1,1)}+\qquad
\end{equation}
$$ \qquad\qquad\qquad\qquad\qquad+ 2\  \langle 1
\rangle^{(Y_1,E),\bullet}_{(2),\mathrm{loc}} \star \langle
\prod_{i=1}^n\tau_{\alpha_i}(\gamma) \rangle^{( Y_2,
E),\bullet}_{(2)}.
$$
In particular, from \eqref{last3} and Lemma \ref{last4}, we have
\[
(-1)^{h^0(L)}\left( \frac{2^{h}}{-3}\right) =\langle
\tau_1(\gamma)\rangle^{X,\bullet}_{2[D],\mathrm{loc}} = \frac12\
(-1)^{h^0(L)}\left( \frac{2^{h}}{-6}\right)+\, 2\langle 1
\rangle^{(Y_1,E),\bullet}_{(2),\mathrm{loc}} \star \langle
\tau_{1}(\gamma) \rangle^{( Y_2, E),\bullet}_{(2)}.
\]
Comparing this with the case where $D=\Po$ ($h=0$), we see that the
relative invariants $\langle 1
\rangle^{(Y_1,E),\bullet}_{(1,1),\mathrm{loc}} $ and $\langle 1
\rangle^{(Y_1,E),\bullet}_{(2),\mathrm{loc}} $ are exactly those for
$D=\Po$, multiplied by $(-1)^{h^0(L)}\ 2^{h}$. Therefore by
\eqref{last5} and \eqref{last6}, we have
\[
\langle \prod_{i=1}^n\tau_{\aa_i}(\gamma)
\rangle^{X,\bullet}_{2[D],\mathrm{loc}} = (-1)^{h^0(L)}\ 2^{h}\
\langle \prod_{i=1}^n\tau_{\aa_i}(\gamma)
\rangle^{Y_0,\bullet}_{2[\Po],\mathrm{loc}}\qquad\qquad\qquad
\]
\[
\qquad\qquad= (-1)^{h^0(L)}\ 2^{h+n-1}\ \prod_{i=1}^n
\frac{\aa_i!}{(2\aa_i+1)!}(-2)^{\aa_i}.
\]
This proves Theorem \ref{MPformthm}.

\subsection{Proof of Lemma \ref{lem3}}

It remains to prove \eqref{last3}. We first claim that for
$-\chi=2(h-1)+1$, the moduli space
$\cN=\cM_{\chi,1}(D,2[D])^\bullet$ has exactly $2^{2h}+1$ connected
components. Obviously, one of them consists of all double covers of
$D$ branched at two points; we denote this component by $\cN_0$.
Each of the other components is distinguished by an \'etale double
cover $u\mh C\to D$: elements in this component are formed by adding
genus one ghost components to $u$. We denote this component by
$\cN_u$ and denote by $\cM_u$ the corresponding component in
$\cM=\cM_{\chi,1}(X,2[D])$. Because there are $2^{2h}$ \'etale
double covers of $D$, there are $2^{2h}$ of such components.

It is easy to see that the contribution to the localized
GW-invariant $\langle \tau_1(\gamma)
\rangle^{X,\bullet}_{2[D],\mathrm{loc}}$ from the component $\cM_u$
is
\[
\langle \tau_1(\gamma)
\rangle^{X,\bullet}_{2[D],\mathrm{loc}}\bigl[\cM_u\bigr]
=(-1)^{h^0(u^*L)}\ \left( -\frac1{12}\right).
\]
Here the sign $(-1)^{h^0(u^*L)}$ is due to Proposition 3.5 and the
factor $-1/12$ is from the formula \eqref{last1} for the degree one
case. Because exactly $2^{h-1}(2^{h}+1)$ of the $2^{2h}$ \'etale
double covers $u:C\to D$ satisfy $h^0(u^*L)\equiv h^0(L)$ mod $2$,
the total contribution to $\langle \tau_1(\gamma)
\rangle^{X,\bullet}_{2[D],\mathrm{loc}}$ from these irreducible
components is
\[
(-1)^{h^0(L)}\, \left( -\frac1{12} \right)\cdot\bl 2^{h-1}(2^{h}+1)
-
2^{h-1}(2^{h}-1)\br = (-1)^{h^0(L)} \left( -\frac{2^{h}}{12}
\right).
\]
Therefore to prove (\ref{last3}), it suffices to show

\begin{lemm}\lab{contri-0}
The contribution to $\langle \tau_1(\gamma)
\rangle^{X,\bullet}_{2[D],\mathrm{loc}}$ from the connected
component $\cM_0$ is
\begin{equation*}\lab{last9} \langle \tau_1(\gamma)
\rangle^{X,\bullet}_{2[D],\mathrm{loc}} \bigl[\cM_0\bigr] =
(-1)^{h^0(L)}\left( \frac{2^{h}}{-3}\right) - (-1)^{h^0(L)}\left(
-\frac{2^{h}}{12} \right) =(-1)^{h^0(L)} (-2^{h-2}).
\end{equation*}
\end{lemm}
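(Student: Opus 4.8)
The plan is to reduce everything to the genus‑zero case together with a multiplicativity in the genus of $D$. Write $N(D)$ for the contribution $\langle \tau_1(\gamma)\rangle^{X,\bullet}_{2[D],\mathrm{loc}}\bigl[\cM_0\bigr]$ in question. Since the displayed chain of equalities in Lemma~\ref{contri-0} is elementary arithmetic, it suffices to prove $N(D)=(-1)^{h^0(L)}(-2^{h-2})$, and I will obtain this from: (i) $N(\Po)=-1/4$, where $\Po$ carries the theta characteristic $\cO_{\Po}(-1)$, so that $X=Y_0$; and (ii) $N(D)=(-1)^{h^0(L)}\,2^{h}\,N(\Po)$ for $D$ of arbitrary genus $h$.

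For (i) everything is proper: the zero section $\Po\subset Y_0$ is rigid, so the moduli space is proper and the localized invariant is the ordinary twisted invariant, which by the $h=0$ formula \eqref{last6} (with $n=1$, $\aa_1=1$) equals $-1/3$. That moduli space has exactly two connected components: $\cM_0$, of honest degree‑two covers of $\Po$ (necessarily branched at two points), and a single component $\cM_u$, obtained by attaching an elliptic ghost carrying the marked point to the trivial étale double cover $\Po\sqcup\Po\to\Po$, whose contribution is $(-1)^{h^0(\cO(-1))}(-1/12)=-1/12$, exactly as in the $\cM_u$‑count preceding Lemma~\ref{contri-0} specialized to $h=0$. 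Subtracting gives $N(\Po)=-1/3-(-1/12)=-1/4$. (The sign is trivial and there is no jumping here, since $h^0(\cO(-1)\otimes\eta^{-1})=0$ for every $\eta\in\mathrm{Pic}^1(\Po)$.)

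For (ii) I would degenerate $X$ as in \S\ref{section4.1}, so that $\cX_0=Y_1\cup_E Y_2$ with $Y_1\cong X$ and $Y_2=E\times\Po$, and apply the degeneration formula, isolating the part of $\langle\tau_1(\gamma)\rangle^{X,\bullet}_{2[D],\mathrm{loc}}$ carried by the closure of $\cM_0$ (this is legitimate because $[\cM_0]\virt\loc$ is supported on $\cN_0$, the zero section of $\cM_0\to\cN_0$). Each term of the formula is a product of a relative invariant of $(Y_2,E)$, which depends only on $\Po=D_2$ and not on $D$, with a \emph{localized} relative invariant of $(Y_1,E)=(X,E)$ supported on relative stable maps whose image lies in the zero section $D_1\cong D$; those $Y_1$‑maps are degree‑two covers of $D$ weighted by $(-1)^{h^0(f^*L)}$. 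The $D$‑dependence of such a localized relative invariant is precisely the phenomenon of Lemma~\ref{last4}: the signed count over the square roots of the relevant line bundle on $D$, together with the $\mathrm{Pic}(D)$‑intersection theory entering through the maps $\nu$ and $\bar\theta\dual$ of Proposition~\ref{fundtwist}, produces the factor $(-1)^{h^0(L)}\,2^{h}$ relative to the corresponding count over $\Po$. Summing the terms yields $N(D)=(-1)^{h^0(L)}\,2^{h}\,N(\Po)=(-1)^{h^0(L)}\,2^{h}(-1/4)=(-1)^{h^0(L)}(-2^{h-2})$.

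The hard part is step (ii), and within it the extraction of the factor $2^{h}$. Two points need care. First, in the central fibre $\cX_0$ both $\cM_0$ and the components $\cM_u$ degenerate to relative configurations whose $Y_1$‑component is a (possibly étale) double cover of $D$, so one must separate—through the constraint \eqref{1.6} on $(\chi_1,\chi_2)$, the gluing partition $\eta$, and the positions of the branch points and of the marked point—which relative configurations lie in the closure of $\cM_0$ and which in the closure of $\bigcup_u\cM_u$. Second, unlike the $\langle 1\rangle$‑invariant of Lemma~\ref{last4}, the $Y_1$‑invariants relevant here receive contributions from positive‑dimensional families of branched covers, so $2^{h}$ is not a bare count: it emerges only after combining the degree $2^{2h}$ of the squaring map $\mathrm{Pic}^1(D)\to\mathrm{Pic}^2(D)$ with $[2]^*\theta=4\theta$ and Poincaré's formula on $W_2\subset\mathrm{Pic}^2(D)$—equivalently, after Grothendieck–Riemann–Roch applied to $R^1\pi_*f^*L$ over the Hurwitz locus $\cN_0$. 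One could also bypass the degeneration and apply Corollary~\ref{twist} directly on $\cN_0$ (which has $[\cN_0]\virt=[\cN_0]$, being of dimension $3=$ vir.dim and generically reduced): since $R^1\pi_*f^*L$ fails to be locally free only along a locus of dimension $\le 1$, which contributes nothing to the degree‑four integral $(-1)^{h^0(L)}\int_{\cN_0}\psi_1\,\ev_1^*\gamma\cup c_1(-\pi_!f^*L)$, one is reduced to the same Jacobian intersection number; in either route that number is the crux of the proof.
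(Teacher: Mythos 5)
Your step (i) is correct, and the multiplicative structure you posit in step (ii), $N(D)=(-1)^{h^0(L)}2^hN(\Po)$, is indeed the true answer; but step (ii) is where the entire difficulty lives and your argument for it is either circular or missing. The scaling of the localized relative invariants of $(Y_1,E)$ by $(-1)^{h^0(L)}2^h$ is something the paper \emph{deduces} from \eqref{last3} (hence from this very lemma): Lemma \ref{last4} establishes it only for the partition $(1,1)$, where the relevant $Y_1$-maps are \'etale double covers, which are rigid, so the invariant is a clean signed count of $2$-torsion line bundles. For the partition $(2)$ — which is exactly the channel through which the closure of $\cM_0$ contributes in the degeneration — the relevant $Y_1$-maps are branched double covers forming a positive-dimensional family on which $R^1\pi_*f^*L$ jumps, and no count of square roots or appeal to Lemma \ref{last4} applies. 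Asserting that "the $\mathrm{Pic}(D)$-intersection theory produces the factor $(-1)^{h^0(L)}2^h$" is a restatement of what must be proved, not a proof; and you cannot import it from the paper's \S4.2 without inverting the logical order there.

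Your proposed bypass via Corollary \ref{twist} on $\cN_0$ rests on a false premise. That corollary requires $R^1\pi_*f^*L$ to be locally free, and the locus where it fails to be locally free does \emph{not} contribute zero: the twisted invariant carried by $\cN_0$ is $(h-2)\,2^{2h-3}$ by \eqref{last10}, which differs from the required $-2^{h-2}$ for every $h\ge 1$, so the correction coming from the jumping locus is essential and nonzero. The point is not that a low-dimensional locus perturbs a Chern-class integral; it is that $\cM_0\to\cN_0$ acquires extra irreducible components (whole vector spaces $H^0(L\otimes\xi^{-1})$-worth of lifts) sitting over the jumping locus, and these components carry part of the localized virtual cycle with nontrivial multiplicities. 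Identifying those multiplicities is the content of the paper's proof: one specializes $D$ to a hyperelliptic curve, cuts down by $\psi_1\,ev_1^*(\gamma)$ to a substack $\cM_{0,q}$, shows its local model over a jumping point is $z w_1=z^3w_2=\cdots=z^{2r+1}w_{r+1}=0$ (resp.\ with even exponents), enumerates the jumping points via binomial coefficients $\binom{2h+2}{h-2-2r}$, and sums the resulting corrections. None of this is replaced by anything in your outline, so the proof is incomplete at its crucial step.
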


Unlike the previous case, we cannot apply Proposition
\ref{fundtwist} directly to evaluate the above quantity because
$\pi_*f^*L$ is not locally free over $\cN_0$. Nevertheless, by a
detailed investigation of its failure to being locally free we can
identify the extra contribution, thus proving the lemma.

It is easy to get the contribution to the twisted GW-invariant from
the component $\cN_0$ of branched double covers. From
\cite{FaberPand-HodgeInt} and \cite{OkounkovPand}, it is
straightforward to deduce that the degree two GW-invariants of $D$
twisted by the top Chern class of $-\pi_!f^*L$ is
\[
\langle \tau_1(\gamma);c_{top}(-\pi_!f^*L)
\rangle^{D,\bullet}_{2,\chi} = \left( h-\frac83 \right)2^{2h-3};
\]
the contribution to the twisted GW-invariant from any of the
irreducible components $\cN_u$ is $-\frac1{12}$. Therefore, the
contribution to the twisted GW-invariant from $\cN_0$ is
\begin{equation}\lab{last10}
\langle \tau_1(\gamma);c_{top}(-\pi_!f^*L)
\rangle^{D,\bullet}_{2,\chi}\bigl[\cN_0\bigr] =\left( h-\frac83
\right)\, 2^{2h-3} - 2^{2h}\, \left( -\frac1{12}\right) = \left( h-2
\right)\, 2^{2h-3}.
\end{equation}

Our next step is to take $D$ to be a \emph{hyperelliptic} curve with
$\delta:D\to \PP^1$ double cover; we then cut down the space $\cN_0$
by the insertion $\tau_1(\gamma)=\psi_1 ev_1^*(\gamma)$. As is
known, a double cover $u:C\to D$ branched at two points $q_1,q_2\in
D$ is characterized by a line bundle $\xi$ on $D$ satisfying
$\xi^2\cong \cO_D(q_1+q_2)$: the curve $C$ is a subscheme of the
total space of $\xi$ defined by $\{t\in \xi\,|\, t^2=v\}$ for $v$ a
section in $H^0(\cO_D(q_1+q_2))$ vanishing at $q_1+q_2$; the map is
that induced by the projection $\xi\to D$.

We now let
$$f: \cC\to D,\quad \pi: \cC\to \cN_0,\quad
s: \cN_0\to \cC \ \ \text{the section of marked points}
$$
be the universal family of $\cN_0$; we let $\ev_1\mh \cN_0\to D$ as
before be the evaluation morphism.
We can cut down $\cN_0$ by $\psi_1 ev_1\sta(\gamma)$ as follows: we
pick a general point $q\in D$ and form the subscheme $ev_1^{-1}(q)$;
it represents the class $ev_1^*(\gamma)$.
For $\psi_1$, we observe that the natural homomorphism
$s^*f^*\Omega_D\to s^*\omega_{\cC/\cN_0}$ induces a section
\begin{equation}\lab{section}
\phi\in\Gamma\bl\cN_0,s^*(\omega_{\cC/\cN_0}\otimes
f\sta\Omega\dual_D)\br.
\end{equation}
This section vanishes at a map $u=(\xi, q_1+q_2)\in\cN_0$ if and
only if the marked point is one of the two branched points $q_1$ and
$q_2$. For convenience, we denote $\cZ=\{\phi=0\}$; for $q\in D$, we
denote $\cZ_q=\cZ\times_D q$ with $\cZ\to D$ induced by evaluation
map $ev_1$. This way, the class $\psi_1 ev_1\sta(\gamma)$ is
represented by the substack
$$\cM_{0,q}=\cM\times_{\cN_0}\cZ_q.
$$

\begin{lemm}
The stack $\cM_{0,q}$ has an induced perfect obstruction theory; its
obstruction sheaf has a cosection consistent with that of $\cM_0$;
the degree of its localized virtual cycle equals the contribution
$\bigl\langle\tau_1(\gamma)\bigr\rangle^{X,\bullet}_{2[D],
loc}[\cM_0]$.
\end{lemm}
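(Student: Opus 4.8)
The plan is to realize $\cM_{0,q}$ as the zero locus of a section of a rank-two vector bundle on $\cM_0$, and then to transport the perfect obstruction theory, the cosection and the localized virtual cycle of $\cM_0$ along this cut-down. Let $\bar p\mh\cM_0\to\cN_0$ be the projection induced by $p\mh X\to D$; let $\cF_1=s^*\bl\omega_{\cC/\cN_0}\otimes f^*\Omega_D^\vee\br$ be the line bundle on $\cN_0$ of which $\phi$ in \eqref{section} is a section, so that $c_1(\cF_1)=\psi_1-(2h-2)\,\ev_1^*(\gamma)$ (the correction term is $c_1(\ev_1^*\Omega_D)=\deg(K_D)\,\ev_1^*(\gamma)$). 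Pick a section $s_q\in H^0(D,\cO_D(q))$ vanishing simply at $q$, set $\cF_2=\ev_1^*\cO_D(q)$, $\cF=\cF_1\oplus\cF_2$, and let $\Phi=(\phi,\ev_1^*s_q)\in\Gamma(\cN_0,\cF)$. From the descriptions of $\cZ=\{\phi=0\}$ and $\cZ_q=\cZ\times_D q$ one has $\cZ_q=\Phi^{-1}(0)$ as substacks of $\cN_0$, hence
\[
\cM_{0,q}=\cM_0\times_{\cN_0}\cZ_q=(\bar p^*\Phi)^{-1}(0)\sub\cM_0 ,
\]
the zero locus of the section $\bar p^*\Phi$ of the rank-two bundle $\bar p^*\cF$ on $\cM_0$. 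Since $\gamma\in H^2(D)$ forces $\ev_1^*(\gamma)^2=\ev_1^*(\gamma^2)=0$, one computes $e(\bar p^*\cF)=c_1(\bar p^*\cF_1)\,c_1(\bar p^*\cF_2)=\psi_1\,\ev_1^*(\gamma)$ in $H^*(\cM_0)$.

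With this set-up the first two assertions are the standard bookkeeping for cutting a stack carrying a perfect obstruction theory by the zero locus of a section of a vector bundle. Cutting $\cM_0$, which carries the usual perfect obstruction theory of the moduli of stable maps to $X$, by $\bar p^*\Phi$ equips $\cM_{0,q}$ with a perfect obstruction theory whose relative obstruction theory over $\cM_0$ is $(\bar p^*\cF)^\vee|_{\cM_{0,q}}$ placed in degree $-1$; from the compatibility triangle one reads off an exact sequence
\[
\bar p^*\cF|_{\cM_{0,q}}\lra\Ob_{\cM_{0,q}}\lra\Ob_{\cM_0}|_{\cM_{0,q}}\lra 0 .
\]
Composing the surjection here with the restriction of the cosection $\sigma\mh\Ob_{\cM_0}\to\cO_{\cM_0}$ induced by the standard two-form (cf.~\eqref{4.7} and \S\ref{section4.1}) gives a cosection $\sigma_q\mh\Ob_{\cM_{0,q}}\to\cO_{\cM_{0,q}}$ --- this is the cosection ``consistent with that of $\cM_0$''. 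Because $\sigma_q$ factors through $\Ob_{\cM_0}|_{\cM_{0,q}}$, it is surjective at a point exactly where $\sigma$ is, so $Z(\sigma_q)=Z(\sigma)\cap\cM_{0,q}$, which is closed in the proper locus $Z(\sigma)\cap\cM_0$ and therefore proper; by Lemma \ref{main-lemma} the localized virtual cycle $[\cM_{0,q}]\virt\loc\in\BM(Z(\sigma_q))$ is then defined.

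For the numerical statement I would prove that, with $\iota\mh Z(\sigma_q)\hookrightarrow Z(\sigma)\cap\cM_0$,
\[
\iota\lsta[\cM_{0,q}]\virt\loc=e(\bar p^*\cF)\cap[\cM_0]\virt\loc=\psi_1\,\ev_1^*(\gamma)\cap[\cM_0]\virt\loc\in\BM\bl Z(\sigma)\cap\cM_0\br ;
\]
taking degrees (legitimate since $Z(\sigma)\cap\cM_0$ is proper) identifies $\deg[\cM_{0,q}]\virt\loc$ with $\int_{[\cM_0]\virt\loc}\psi_1\,\ev_1^*(\gamma)=\bl\langle\tau_1(\gamma)\rangle^{X,\bullet}_{2[D],\mathrm{loc}}\br[\cM_0]$, which is the lemma. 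For ordinary virtual cycles the analogue $\iota_*[Z(s)]\virt=e(\cF)\cap[M]\virt$ is the classical statement that the virtual class of the zero locus of a section equals its localized top Chern class, proved by deformation to the normal cone; the step I expect to be the main obstacle is the localized refinement. Fix a vector bundle $E$ on $\cM_0$ whose sheaf of sections surjects onto $\Ob_{\cM_0}$, with virtual normal cone $W\in Z\lsta E$, so that $[\cM_0]\virt\loc=s^!_{E,\mathrm{loc}}[W]$ by Lemma \ref{ker-cone} and Definition-Proposition \ref{2.5}; then $\Ob_{\cM_{0,q}}$ is presented by $E|_{\cM_{0,q}}\oplus\bar p^*\cF|_{\cM_{0,q}}$ with induced cosection equal to that of $E$ on the first summand and identically zero on the second. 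Because the cosection vanishes on $\bar p^*\cF$, in the construction of Definition-Proposition \ref{2.5} the small almost-splitting section may be taken to be a direct sum of one almost splitting the cosection of $E$ with an \emph{arbitrary} generic small section of $\bar p^*\cF$ (the latter needs no perturbation from the cosection); hence the localized Gysin map for $E\oplus\bar p^*\cF$ factors as $s^!_{E,\mathrm{loc}}\bl e(\bar p^*\cF)\cap-\br$, exactly as in the bundle-deformation argument in the proof of Proposition \ref{fundtwist}. Feeding this into the usual deformation-to-the-normal-cone argument relating the virtual normal cone of $\cM_{0,q}$ to $W$ (which runs verbatim as for $[\cM_0]\virt$) yields the displayed identity; alternatively one performs the two cuts $Z(\phi)$ and $\ev_1^{-1}(q)=Z(\ev_1^*s_q)$ in succession, applying the localized-Euler-class compatibility twice and using $\ev_1^*(\gamma)^2=0$ to drop the cross term. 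Checking carefully that enlarging the presenting bundle by a summand carrying the trivial cosection contributes exactly one Euler-class cap and no further localization is the technical heart of the argument; everything else is routine.
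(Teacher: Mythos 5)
Your proposal is correct and follows essentially the same route as the paper: realize $\cM_{0,q}$ as the zero locus of sections of two line bundles pulled back from $\cN_0$ (your $\bar p^*\cF_1$, $\bar p^*\cF_2$ are the paper's $L_1$, $L_2$), read off the induced perfect obstruction theory, the surjection $\Ob_{\cM_{0,q}}\to\Ob_{\cM_0}|_{\cM_{0,q}}$ and hence the induced cosection with proper degeneracy locus, and then establish the localized refinement of $[Z(s)]\virt=e(\cF)\cap[M]\virt$ by combining the cutting-by-sections argument of \cite[Lemma 4.6]{Li2} with the kernel-cone/localized-Gysin mechanism of Proposition \ref{constancy}. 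Your explicit identification of the bundles and the computation $e(\bar p^*\cF)=\psi_1\,\ev_1^*(\gamma)$ via $\ev_1^*(\gamma)^2=0$ only makes precise what the paper asserts implicitly, and both treatments leave the same technical step (that the relevant rational equivalence stays inside the kernel cone of the cosection) at the level of a sketch.
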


\begin{proof}
Our first step is to argue that there are line bundles $L_1$ and
$L_2$ on $\cM_0$ and their respective sections $s_1$ and $s_2$ so
that $(s_1=s_2=0)=\cM_{0,q}$. Indeed, because of our construction
that $\cZ\sub\cN_0$ is a Cartier divisor and $\cZ_q=\cZ\times_D q$,
$\cZ_q\sub\cN_0$ is cut out by two sections of two line bundles on
$\cN_0$. Then since $\cM_{0,q}=\cM_0\times_{\cN_0}\cZ_q$,
$\cM_{0,q}$ is cut out by the vanishing of the pullback of these
sections. This proves the claim.

We let the two pullback line bundles be $L_1$ and $L_2$ and let the
two pullback sections be $s_1$ and $s_2$. According to \cite{Li2},
$(s_1,s_2)$ induces an obstruction theory of
$(s_1=s_2=0)=\cM_{0,q}$; its obstruction sheaf $\Ob_{\cM_{0,q}}$ and
the obstruction sheaf $\Ob_{\cM_0}$ of $\cM_0$ fits into the exact
sequence
\begin{equation}\lab{induced}
\lra\cO_{\cM_{0,q}}(L_1)\oplus \cO_{\cM_{0,q}}(L_2)\lra
\Ob_{\cM_{0,q}}\lra
\Ob_{\cM_{0}}\otimes_{\cO_{\cM_{0}}}\cO_{\cM_{0,q}}\lra 0;
\end{equation}
the obstruction theory of $\cM_{0,q}$ is perfect as well.

In \cite[Lemma 4.6]{Li2}, it is proved that
$$[\cM_{0,q}]\virt=c_1(L_1)\cup c_1(L_2)[\cM_0]\virt.
$$
To apply to the localized GW-invariants, we need to define the
localized virtual cycle $[\cM_{0,q}]\virt\loc$; show that applying
the localized first Chern class $c_1(L_i,s_i)$ we will have
\begin{equation}\lab{cyc-comp}
[\cM_{0,q}]\virt\loc=c_1(L_1,s_1)\cup c_1(L_2,s_2)[\cM_0]\virt\loc.
\end{equation}
For this, let $\sigma\mh\Ob_{\cM_0}\to\cO_{\cM_0}$ be the cosection.
Because of \eqref{induced}, $\Ob_{\cM_{0,q}}$ has an induced
cosection, say $\sigma_q$. Since the degeneracy locus
$Z(\sigma)\sub\cM_0$ is proper, $Z(\sigma_q)=Z(\sigma)\cap\cM_{0,q}$
is also proper. Further, by combining the argument of \cite[Lemma
4.6]{Li2} and the proof of Proposition \ref{constancy}, one proves
the identity \eqref{cyc-comp}. Since the argument is routine, we
shall omit the details here. This proves the lemma.
\end{proof}

In light of this lemma, we need to investigate the stack structure
of $\cM_{0,q}$. According to Lemma \ref{yh5.5}, it suffices to
investigate the locus $\Lambda\sub \cZ$ near which the sheaf
$R^1\pi\lsta f\sta L$ on $\cN_0$ is non-locally free. For a double
cover $u\mh C\to D$ in $\cZ$ given by $(\xi, q_1+q_2)$, since
\begin{equation}\lab{splitL}
H^0(C,u^*L)=H^0(D,L)\oplus H^0(D,L\otimes \xi^{-1}),
\end{equation}
it is easy to see that $R^1\pi\lsta f\sta L$ fail to be locally free
at $u$ exactly when $h^0(L\otimes \xi^{-1})\ne 0$. Let
$$\eta\ne
0\in H^0(D,L\otimes \xi^{-1})
$$
and let $H=\delta^*\cO_{\PP^1}(1)$.
Then as $L^2\cong K_D=(h-1)H$ and $\xi^{\otimes 2}=\cO(q_1+q_2)$, we
have
\begin{equation}\lab{eq-5}
(h-1)H=q_1+q_2+2\eta\upmo(0),
\end{equation}
which, because $D$ is hyperelliptic, is possible only if either
$q_1=q_2$ or they are conjugate to each other. In particular, when
$u\in \Lambda\cap\cZ_q$, one of $q_i$ must be $q$; thus
$\Lambda\cap\cZ_q$ is finite.

Were this intersection empty, by Corollary \ref{twist}, the
localized GW-invariant \eqref{last9} would have been equal to the
twisted GW-invariant multiplied by $(-1)^{h^0(L)}$. To find the
correction caused by these exceptional points, we need a detailed
analysis of the virtual normal cone of the moduli space $\cM_0$ near
the fibers over $\Lambda$. Since the cases $q_2=q_1$ and $=\bar q_1$
($\bar q_1$ is the conjugate point of $q_1$) require independent
analysis, we shall investigate them separately. We denote by
$\Lambda\dpri\sub\Lambda$ the subset of elements associated to $q_1=
q_2$; we denote $\Lambda\pri=\Lambda-\Lambda\dpri$.

For general $q\in D$, it is easy to enumerate the set
$\Lambda\pri\cap\cZ_q$. Let $u=(\xi,q+\bar q)$ be any point in this
set. Since $q+\bar q=H$, (\ref{eq-5}) reduces to
$2\eta\upmo(0)=(h-2)H$. Let
\begin{equation}\lab{def-r}
r=h^0(L\otimes \xi\upmo)-1=h^1(L\otimes \xi\upmo)-2,
\end{equation}
which ranges between $0$ and $(h-2)/2$. Since $D$ is hyperelliptic,
$\eta\upmo(0)$ must be $p_1+\cdots +p_{h-2}$ for points $p_i$ on $C$
so that $p_{i+r}=\bar p_i$ for $i\leq r$ and $p_j$, $j\geq 2r+1$,
are distinct ramification points of $\delta:D\to \PP^1$. The $r$ in
\eqref{def-r} decomposes $\Lambda\pri\cap\cZ_q$ into union $\cup
\Lambda\pri_r$; elements in $\Lambda\pri_r$ are uniquely determined
by the distinct ramified points $p_{j>2r}$ since
\begin{equation}\lab{rel-3}
\xi=L(-p_1-\cdots -p_{h-2})=L-rH-p_{2r+1}-\cdots - p_{h-2}.
\end{equation}
Therefore, $\Lambda\pri_r$ consists of $\binom{2h+2}{h-2-2r}$
elements, where $2h+2$ is the number of ramification points of
$\delta$.

We next claim that the scheme structure of
$\cM_{0,q}=\cM_0\times_{\cN_0}\cZ_q$ near the fiber over
$u=(\xi,q+\bar q)\in \Lambda\pri_{r}$ is (analytically) isomorphic
to
\begin{equation}\lab{eq-6}
\bA^l\times\left\{ zw_1=z^3w_2=\cdots =z^{2r+1}w_{r+1}=0
\right\}\sub \bA^{l+r+2} ,\quad l=h^0(L).
\end{equation}
Since $\cZ_q$ is smooth near $u$, the local defining equation of
$\cM_{0,q}$ is determined by a locally free resolution of $\bar\pi_!
\bar f\sta L$ for $\bar f\mh \bar\cC\to D$ and $\bar\pi\mh\bar\cC\to
\cZ_q$ the restrictions of $f$ and $\pi$ to $\cZ_q$. On the other
hand, from (\ref{splitL}), we see immediately that $\bar\pi_*\bar
f^*L=H^0(L)\otimes\cO$. By Riemann-Roch, away from
$\cM_{0,q}\times_{\cZ}\Lambda$, $R^1\bar\pi_*\bar f^*L$ is locally
free and has rank $l+1$. Therefore, $R^1\bar\pi_*\bar f^*L$ is a
direct sum of its torsion free part $\cR$ and its torsion part.
Since $h^1(L\otimes \xi^{-1})=r+2$ by Riemann-Roch \cite{ACGH}, in a
formal neighborhood of $u\in \cZ_q$ with $z$ the local coordinate of
$\cZ_q$ at ${u}$, we can find positive integers $\aa_1\le
\aa_2\le\cdots \le \aa_{r+1}$ and express the torsion part of
$R^1\bar\pi_*\bar f^*L$ as
\begin{equation}\lab{torsion}
\CC[[z]]/(z^{\aa_1})\oplus \cdots \oplus\CC[[z]]/(z^{\aa_{r+1}}).
\end{equation}
Thus a (locally free) resolution of $\bar\pi_!\bar f^*L$ can be
chosen as
\[
\mathrm{diag}(z^{\aa_1},\cdots,z^{\aa_{r+1}}):
\bigoplus_{i=1}^{r+1}\cO\lra \left(
\bigoplus_{i=1}^{r+1}\cO\right)\oplus \cR.
\]
By Lemma \ref{yh5.5}, if we let $(z_1,\cdots, z_l, w_1,\cdots,
w_{r+1})$ be the coordinate for the vector space
$H^0(C,f^*L)=H^0(L)\oplus H^0(L\otimes\xi^{-1})$, the local defining
equation of $\cM_{0,q}$ near fibers over $u$ can be chosen as
\[
z^{\aa_1}w_1=z^{\aa_2}w_2=\cdots =z^{\aa_{r+1}}w_{r+1}=0.
\]

It remains to determine the integers $\aa_i$. For this we need to
investigate whether a line $\CC \eta\sub H^0(L\otimes\xi^{-1})$ can
be extended to a submodule
\begin{equation}\lab{ext5}
\CC[z]/(z^{n})\sub \pi\lsta f\sta L/z^n \pi\lsta f\sta L.
\end{equation}
Let
\[
H^0(L\otimes\xi^{-1})=F_{r+1}\supseteq F_r\supseteq\cdots\supseteq
F_1\supseteq F_0=\{0\}
\]
be a flag with $F_{r+1-k}=H^0(L\otimes \xi^{-1}(-kq))$. It is a
complete flag since $h^0(L\otimes \xi^{-1}(-kq))=\max(r+1-k,0)$,
using \eqref{rel-3}.
%
%
For $0\le k\le r$, let $\eta\in F_{r+1-k}-F_{r-k}$ be a general
element and $(p_i)$ be the zeros of $\eta$. Then after reshuffling
if necessary, we have
\[
p_1=\cdots =p_k=q,\ p_{r+1}=\cdots =p_{r+k}=\bar q,\ p_{k+1},\cdots,
p_r, p_{r+k+1},\cdots, p_{2r}\notin \{q,\bar q\}.
\]
Suppose $\eta$ fits into a $\CC[z]/(z^n)$-modules as in
\eqref{ext5}, then there are morphisms
$$\bq, \ {\bold q}\pri\and {\bold p}_i: \spec\CC[z]/(z^n)\lra D,
$$
which extend the points $q,\bar q$ and $p_i$ respectively, such that
$\bq$ is constant, $\bq\pri$ has non-vanishing first order
variation, and that as families of divisors on $D$ parameterized by
$\spec\CC[z]/(z^n)$, the relation \eqref{eq-5} holds:
$$(h-1)H=\bq+\bq\pri+2\bold p_1+\cdots+2\bold p_{h-2}.
$$
Because $\bar q$ is not a branched point of $\delta\mh D\to\Po$, our
local coordinate $z$ for $\cZ_q$ at $u$ can be thought of as a local
coordinate for $D$ near $\bar q$ and also a local coordinate for
$\PP^1$ via $\delta$. Without loss of generality, we can choose
$\bq\pri$ so that $\delta\circ\bq\pri(z)=z$. Suppose
$\delta\circ\bold p_i=p_i(z)$. Then the above identity on divisors
over $\spec\CC[z]/(z^n)$ is equivalent to
\begin{equation}\lab{last20}
w\prod_{i=1}^k(w-p_i(z))^2\equiv
(w-z)\prod_{i=1}^k(w-p_{r+i}(z))^2\mod z^n,
\end{equation}
for a formal variable $w$.

Shortly, we shall show that \eqref{last20} is solvable if and only
if $n\le 2k+1$. Once this is done, we see that because $\dim F_k=k$,
we have $\alpha_1=1,\,\alpha_2=3,\,\ldots\,,\alpha_r=2r+1$. This
will provide us the structure result of $\cM_{0,q}$ over an element
in $\Lambda\pri_r$.

We now prove \eqref{last20} is solvable if and only if $n\leq 2k+1$.
We first compare the constant coefficients in $w$ of \eqref{last20};
we obtain
\begin{equation}\lab{last21} z\prod_{i=1}^k p_{r+i}^2(z)\equiv 0 \mod
z^n .
\end{equation}
To proceed, we shall show that for $n=2k+1$, \eqref{last20} holds
true if and only if $p_{r+i}(z)\equiv_{(z^2)}\! c_iz$ for uniquely
determined complex numbers $c_i\ne 0$, $1\le i\le k$. (Here we use
$\equiv_{(z^n)}$ to mean equivalence modulo $z^n$.) Together with
\eqref{last21}, this immediately implies the claim.

Let $n=2k+1$; 
let $f$, $g$ and $h$ be defined by
$$f=\prod_{i=1}^k(1-p_i(z)/w)=\sum_{j=0}^kA_jw^{-j},\quad
g=\prod_{i=1}^k(1-p_{r+i}(z)/w)=\sum_{j=0}^kB_jw^{-j}
$$
and $h=\sqrt{1-z/w}\cdot g=\sum_{j=0}^\infty C_jw^{-j}$, with $A_i$,
$B_i$ and $C_i$ analytic functions of $z$. Dividing \eqref{last20}
by $w^{2k+1}$, we see immediately that that $C_j\equiv_{(z^n)}\!A_j$
for $0\le j\le k$ and that $h^2-f^2=2f(h-f)+(h-f)^2$ has no terms
$w^{-j}$ with $j\le 2k$, modulo $z^n$ as usual. Since $f$ is monic
and $h-f$ has no terms $w^{-j}$ with $j\le k$, $h-f$ has no terms
$w^{-j}$ with $j\le 2k$. This implies that $C_j\equiv_{(z^n)}\!0$
for $k<j\le 2k$. If we let $\sqrt{1-z/w}=\sum D_jw^{-j}$, we obtain
\[
0\equiv_{(z^n)}\!C_j = \sum_{i=0}^kD_{j-i}B_i=
D_j+\sum_{j=1}^kD_{j-i}B_i
\]
for $k<j\le 2k$, and thus we have a matrix equation
\begin{equation}\lab{last21g}
(G_1\,\, G_{2}\,\, \cdots \,\, G_k) B
\equiv_{(z^n)}\! -G_{k+1},
\end{equation}
where $G_j$ is the column vector $(D_j,D_{j+1},\cdots, D_{j+k-1})^t$
and $B$ is the column vector $(B_k,B_{k-1},\cdots,B_1)^t$. Since
$D_j=-2^{1-2j} \frac{1}{j} \binom{2j-2}{j-1} z^j$, the determinant
of the Hankel matrix $(G_1\,\, G_{2}\,\, \cdots \,\, G_k)$ is
$[(-1)^k/2^{2k^2-k}]z^{k^2}$ and the determinant of $(G_2\,\,
G_{3}\,\, \cdots \,\, G_{k+1})$ is $[(-1)^k/2^{2k^2+k}]z^{k^2+k}$.
(See \cite{Radoux} for the computation of these Hankel
determinants.) To prove the solvability of \eqref{last20} for
$n=2k+1$, we can replace $\equiv_{(z^n)}$ by the honest equality. By
Cramer's rule, the matrix equation $(G_1\,\, G_{2}\,\, \cdots \,\,
G_k) B = -G_{k+1}$ has a unique solution $B_k=(-4)^{-k}z^k$ and
$B_j=\beta_j z^j$ for some $\beta_j\in \CC$, $j\le k$. The equation
$A_j=C_j$ for $j\le k$ implies $A_j=\gamma_jz^j$ for some
$\gamma_j\in \CC$. Therefore $p_{r+i}(z)=c_iz$ for some $c_i\ne 0$
solves \eqref{last20} and \eqref{last21}.

To see the insolvability of \eqref{last20} for $n>2k+1$, we observe
from \eqref{last21g} that the least order terms of $B_j$ are
uniquely determined as above and thus $p_{r+i}(z)\equiv_{(z^2)}c_iz$
for $c_i\ne 0$. Then \eqref{last21} cannot be satisfied. This
completes our description of the formal neighborhoods of exceptional
points in $\Lambda\pri_r$.
%

The structure of $\Lambda\dpri$ is similar. First,
$\Lambda\dpri\cap\cZ_q$ is the disjoint union of $\Lambda\dpri_{r}$
for $0\le r\le \frac{h-3}2$; each $\Lambda\dpri_{r}$ has $
\binom{2h+2}{h-3-2r}$ elements; the local defining equation for the
stack $\cM_{0,q}$ near a fiber over a point in $\Lambda\dpri_{r}$ is
isomorphic to
\begin{equation}\lab{II}
\bA^l\times\left\{z^{2}w_1=z^{4}w_2=\cdots
=z^{2(r+1)}w_{r+1}=0\right\}\sub\bA^{l+r+2}.
\end{equation}

Combined, we see that $\cM_{0,q}$ has one irreducible component
$V_0$ that dominates $\cZ_q$: it is the vector bundle
$\bar\pi\lsta\bar f\sta L$ over $\cZ_q$; other irreducible
components lie over points in $\Lambda\cap \cZ_q$: one for each
$u\in \Lambda\pri_r\cap\cZ_q$---we denote this component by
$V_u\sub\bar p\upmo(u)$, where $\bar p\mh\cM_{0,q}\to\cZ_q$ is the
projection as before. Finally, we comment that points in $V_u$ have
automorphism groups $\ZZ_2$ since the only marked points are branch
points for these stable maps.

We now prove Lemma \ref{lem3}. We let $E$ be a vector bundle on
$\cM_{0,q}$ so that its sheaf of sections surjects onto
$\Ob_{\cM_{0,q}}$; we let $W\sub E$ be the virtual normal cone of
the obstruction theory of $\cM_{0,q}$. Of all the irreducible
components of $W$, one dominates $V_0$. It is a sub-bundle of
$E|_{V_0}$; its normal bundle in $E|_{V_0}$ is the pullback of the
torsion free part $\cR$ of $R^1\bar \pi\sta\bar f\sta L$. Therefore,
the contribution to the localized GW-invariant of this component is
$(-1)^l$ times the first Chern class of $\cR$. Here $l=h^0(L)$ is
the dimension of the fiber of $V_0\to\cZ_q$; the sign $(-1)^l$ is
due to the reason similar to the proof of Proposition
\ref{fundtwist}; the degree of $\cR$ is the difference of
$c_1(-\bar\pi_!\bar f^*L)$ and the total degree of the torsion part
of $R^1\bar\pi\lsta\bar f^*L$. By \eqref{eq-6}, the latter at a
point in $\Lambda\pri_r\subset \Lambda\pri$ is \eqref{torsion} with
$\alpha_i=2i-1$;
thus the total degree of the torsion part lying over $\Lambda\pri$
is
\[
\sum_{r=0}^{[\frac{h-2}{2}]} \frac12\cdot
a_{2r}\cdot\binom{2h+2}{h-2-2r},\quad a_{2r}=1+3+5+\cdots +(2r+1).
\]
Here the factor $\frac12$ is from the trivial $\ZZ_2$ action.
Similarly, the degree of the torsion part lying over $\Lambda\dpri$
is
\[
\sum_{r=0}^{[\frac{h-3}2]}\frac12\cdot
a_{2r+1}\cdot\binom{2h+2}{h-3-2r},\quad a_{2r+1}=2+4+6+\cdots
+(2r+2).
\]
Hence, by Proposition \ref{fundtwist} and \eqref{last10}, the
contribution to the localized GW-invariant of this irreducible
component is
\[
(-1)^{l}\Bigl[(h-2)2^{2h-3} - \sum_{j=0}^{h-2}\binom{2h+2}{h-2-j}a_j
\Bigr].
\]

The other irreducible components are supported over $V_u$ for
$u\in\cZ_q\cap \Lambda$. Let $m=\rank E$. Over a $u\in \cZ_q\cap
\Lambda\pri_r$, the virtual normal cone $W$ has $(r+1)$ irreducible
components lying over $V_u\sub\bar p\upmo(u)$: they are indexed by
$0\leq i\leq r$; the $i$-th is supported on a rank $m+i-(l+r+1)$
subbundle of $E$, of multiplicity $2i+1$, over
$$V_{u,i}=\bA^l\times\{w_1=\cdots=w_i=0,\, z^{2i+1}=0\}\sub V_u
\sub\bar p\upmo(u).
$$
Thus following the proof of Proposition \ref{fundtwist}, the total
contribution to the localized GW-invariant of these components is
\[
b_{2r}\cdot \frac{(-1)^l}2=\sum_{i=0}^r(-1)^{l+r+1-i}\, (2i+1)\,
\frac12.
\]
As mentioned, the sign is from Proposition \ref{fundtwist}; $(2i+1)$
is the multiplicity; $1/2$ is from the trivial $\ZZ_2$ action.

By the same reason, there are $(r+1)$ irreducible components of $W$
over a point $u\in\Theta_r\dpri$; they are indexed by $0\leq i\leq
r$; the $i$-th is a rank $m+i-(l+r+1)$ sub-bundle of $E$ over
$\bA^{l+r+1-i}$ with multiplicities $2i+2$. The contribution to the
localized GW-invariant of these components is
\[
b_{2r+1}\cdot \frac{(-1)^l}2=\sum_{i=0}^r(-1)^{l+r+1-i}\, (2i+2)\,
\frac12.
\]

Combining the above, the contribution to the localized GW-invariant
of the component $\cN_0$ of branched double covers is
\[
(-1)^{h^0(L)}\Bigl[(h-2)2^{2h-3} -
\sum_{j=0}^{h-2}\binom{2h+2}{h-2-j}\cdot\frac{a_j-b_j}{2} \Bigr].
\]
Now it is an elementary combinatorial exercise to check that this
coincides with the desired $(-1)^{h^0(L)}(-2^{h-2})$. This completes
our proof of Lemma \ref{contri-0}.

\section{Remarks on three-fold case}

We shall conclude our paper by commenting on how our method of
localization by holomorphic two-form can be applied to study
GW-invariants of three-folds.

Let $X$ be a smooth projective three-fold over a smooth projective
surface $S$
$$p:X\lra S;
$$
we assume $S$ has a holomorphic two-form $\theta$ whose vanishing
locus is $D$; $\theta$ pulls back to a holomorphic two-form $\tilde
\theta$ on $X$.
Let $\beta\in H_2(X,\ZZ)$ and
$u:C\to X$ be a stable map such that $u_*[C]=\beta$. Then $u$ is
$\tilde{\theta}$-null only if for any irreducible component $C\pri$
of $C$ its image $p(C\pri)$ is either a point or is contained in
$D$. Now suppose $p_*(\beta)\ne 0\in H_2(S,\ZZ)$; since $C$ is
connected, $u$ is $\tilde{\theta}$-null stable map if and only if
$u(C)\sub Y=p^{-1}(D)$. Here $Y=p^{-1}(D)$ is defined by the
Cartesian square
\[
\xymatrix{ X \ar[d]_p & \,Y\ar@{_(->}[l]_i \ar[d]^q\\
S & \,D\ar@{_(->}[l]_j  }
\]
Hence, the virtual fundamental class of the
moduli space $\mgn(X,\beta)$ is supported in the locus of stable
maps to $Y$. In particular, if $\beta$ is not in the image of
$H_2(Y,\ZZ)$, the GW-invariants vanish.

Now suppose $\beta'=j_*(d[D])$ for some $d\in \ZZ-\{0\}$. Under
favorable circumstances as in Proposition \ref{fundtwist}, we can
express the virtual fundamental class $[\mgn(X,\beta)]\virt$ in
terms of $[\mgn(Y,\beta_Y)]\virt$ for suitable $\beta_Y\in
H_2(Y,\ZZ)$. In this case, the GW-invariants of the three-fold $X$
can be computed in terms of the GW-invariants of the surface $Y$.
In case $Y$ also admits a holomorphic two-form, for instance,
$Y=D\times E$ for an elliptic curve $E$, then we can apply our
localization by holomorphic two-form again and reduce the
computation further to the curve case. In case $Y$ is a ruled
surface over $D$, then by deforming $Y$ to $\PP (\cO\oplus L)$ for
some line bundle $L$ on $D$, we may assume $Y$ admits a torus
action. The virtual localization formula \cite{GraberPand} then can
be applied to this case to evaluate the GW-invariants of $Y$ in
terms of those of $D$.

Another important case is when $X$ is a ruled three-fold over a
surface $S$ with a canonical divisor $D$. In this case, we can
proceed as follows: using the circle action on the fibers of $X\to
S$ we can apply the virtual localization first and reduce the
computation of the invariants of $X$ to $S$. After that, we can
apply our localization principle by holomorphic two-form to further
reduce the computation to the curve case. As is indicated by our
degree two calculation of the GW-invariants of surfaces, further
works must be done to carry this through. Nevertheless, it is worth
pursuing due to the importance of the ruled three-folds in the
future investigation of GW-invariants of general three-folds.

\bibliographystyle{amsplain}

\end{document}